\numberwithin{equation}{section}
\let\z=\zeta
\let\D=\Delta
\let\ep=\epsilon
\def\cA{{\cal A}}
\def\cX{{\cal X}}
\def\cZ{{\cal Z}}
\def\no{\noindent}
\newcommand{\beq}{\begin{equation}}
\newcommand{\eeq}{\end{equation}}
\newcommand{\ben}{\begin{eqnarray}}
\newcommand{\een}{\end{eqnarray}}
\newcommand{\beno}{\begin{eqnarray*}}
\newcommand{\eeno}{\end{eqnarray*}}
\newtheorem{theorem}{Theorem}[section]
\newtheorem{definition}[theorem]{Definition}
\newtheorem{lemma}[theorem]{Lemma}
\newtheorem{proposition}[theorem]{Proposition}
\newtheorem{remark}[theorem]{Remark}
\theoremstyle{plain}
\newtheorem{thm}{Theorem}[section]
\newtheorem{cor}[thm]{Corollary}
\newtheorem{lem}[thm]{Lemma}
\newtheorem{prop}[thm]{Proposition}
\newtheorem{rem}[thm]{Remark}
\newtheorem{defi}[thm]{Definition}
\def\sqr#1#2{{\vcenter{\vbox{\hrule height.#2pt
              \hbox{\vrule width.#2pt height#1pt \kern#1pt \vrule
width.#2pt}
              \hrule height.#2pt}}}}
\def\be{\begin{equation}}
\def\ee{\end{equation}}
\def\ga{{\gamma}}
\def\ep{{\epsilon}}
\def\Sp{{\mathrm {Sp}}}
\def\ga{{\gamma}}
\def\D{\Delta}
\def\cA{{\cal A}}
\def\cX{{\cal X}}
\def\cZ{{\cal Z}}
\def\no{\noindent}
\def\ss{\smallskip}
\def\bs{\bigskip}
\def\dim{\hbox{\rm dim$\,$}}
\def\({\Big (}
\def\){\Big )}
\def\[{\Big[}
\def\]{\Big]}
\def\be{\begin{equation}}
\def\bel{\begin{equation}\label}
\def\ee{\end{equation}}
\def\bea{\begin{eqnarray}}
\def\eea{\end{eqnarray}}
\def\bt{\begin{theorem}}
\def\et{\end{theorem}}
\def\bc{\begin{corollary}}
\def\ec{\end{corollary}}
\def\bl{\begin{lemma}}
\def\el{\end{lemma}}
\def\bp{\begin{proposition}}
\def\ep{\end{proposition}}
\def\br{\begin{remark}}
\def\er{\end{remark}}
\def\ba{\begin{array}}
\def\ea{\end{array}}
\def\bd{\begin{definition}}
\def\ed{\end{definition}}
\begin{document}

\title{\bf On the $j$-th  Eigenvalue of Sturm-Liouville  Problem and the Maslov Index}
\author
{Xijun Hu$^1$\thanks{ E-mail: xjhu@sdu.edu.cn }  \quad  Lei Liu$^1$\thanks{ E-mail: 201511242@mail.sdu.edu.cn} \quad Li Wu$^1$\thanks{
 E-mail: 201790000005@sdu.edu.cn} \quad Hao Zhu$^2$\thanks{
 E-mail:  haozhu@nankai.edu.cn }    \\ \\
$^1$ Department of Mathematics, Shandong University\\
Jinan, Shandong 250100, P. R. China\\
$^2$ Chern Institute of Mathematics, Nankai University\\[2pt]
					  Tianjin, 300071, P. R. China
}

\date{}
\maketitle
\begin{abstract}
In the previous papers \cite{HLWZ,KWZ}, the jump phenomena of the $j$-th eigenvalue were  completely characterized for Sturm-Liouville problems.
In this paper,
we show that the  jump number of these eigenvalue branches is exactly the Maslov index for the path of corresponding boundary conditions.
 Then we  determine the sharp range of the $j$-th eigenvalue on  each layer of the space of boundary conditions. Finally, we prove that the graph of monodromy matrix  tends to the Dirichlet boundary condition  as the spectral parameter goes to $-\infty$.
\end{abstract}

\bs

\no{\bf AMS Subject Classification:}  34B24, 34L15, 53D12, 34B09.

\bs

\no{\bf Key Words}.  Sturm-Liouville problem,  eigenvalue, boundary condition,  Maslov index,  Lagrangian subspace.
\section{Introduction}

 In this paper, we consider   the  Sturm-Liouville  problem
 \begin{equation}\label{eq:S-L equation}
 -\frac{d}{dt}\left(P(t)\frac{d}{dt}{x}(t)+Q(t)x(t)\right)+Q(t)^{T}\frac{d}{dt}{x}(t)+R(t)x(t)=\lambda D(t) x(t),
  \end{equation}
 where $P, Q\in H^1([0,T],L(n))$, $R, D\in C([0,T],L(n))$,  $P(t), D(t)$ are positive  definite, and $P(t), R(t), D(t)$ are symmetric for all $t\in [0, T]$. Here $L(n)$ is the set of $n\times n$ real-valued matrices and $Q(t)^T$ is the transpose of $Q(t)$.
 We describe a self-adjoint boundary condition of \eqref{eq:S-L equation} by a Lagrangian subspace of $\mathbf{C}^{2n}\oplus\mathbf{C}^{2n}$ as follows.
Consider $(\mathbf{C}^{2n},\omega_n)$  as a complex symplectic vector space with the symplectic form $\omega_n(x,y)=\langle J_nx,y\rangle$ for any
 $x,y\in \mathbf{C}^{2n},$
where   $\langle\cdot,\cdot\rangle$ is the standard Hermitian inner product in $\mathbf{C}^{2n}$ and $J_n=\begin{bmatrix} 0&-I_{n}\\I_{n}&0 \end{bmatrix}$.
   Denote $$(\mathcal{V}, \Omega)=(\mathbf{C}^{2n}\oplus\mathbf{C}^{2n},-\omega_n\oplus\omega_n),$$
 which is a $4n$-dimensional symplectic space.
  A  subspace $\Lambda\subset\mathcal{V}$ is called  Lagrangian if $\Omega\mid _{\Lambda}=0 $ and $\dim_{\mathbf{C}}\Lambda=\frac{1}{2}\dim_{\mathbf{C}}\mathcal{V}=2n$.   Denote
  the set of Lagrangian subspaces by $\text{Lag}(\mathcal{V}, \Omega)$. Then   $\text{Lag}(\mathcal{V}, \Omega)$ is a compact metric space \cite{BoZh14}. Let $\dot{x}=\frac{d}{dt}x$,
  $y(t)=P(t)\dot{x}(t)+Q(t)x(t),$ and $z(t)=(y(t)^T,x(t)^T)^T$. Then any self-adjoint  boundary condition can be written as
 \begin{equation}\label{eq:original general boundary condition}
\begin{bmatrix}
		z(0)\\z(T)
		\end{bmatrix}\in\Lambda_0,
 \end{equation}
 where $\Lambda_0\in \text{Lag}(\mathcal{V}, \Omega)$.
In particular, the Neumann and Dirichlet boundary conditions are given by
$$\Lambda_N=\left\{\begin{bmatrix}
		z(0)\\z(T)
		\end{bmatrix}\in\mathbf{C}^{4n}: y(0)=y(T)=0\right\}, \quad  \Lambda_D=\left\{\begin{bmatrix}
		z(0)\\z(T)
		\end{bmatrix}\in\mathbf{C}^{4n}: x(0)=x(T)=0 \right\},$$
respectively. Moreover, $\text{Lag}(\mathcal{V}, \Omega)$ is exactly the space of self-adjoint boundary conditions.

The formal differential operator corresponding to (\ref{eq:S-L equation}) is $$\mathcal{A}:=-\frac{d}{dt}\left(P\frac{d}{dt}+Q\right)+Q^{T}\frac{d}{dt}+R.$$
Define an operator $\mathcal{A}_{\Lambda_0}x:=\mathcal{A}x$ on $L^2([0,T], \mathbf{C}^{n} )$ with the domain
$$E_{\Lambda_0}(0,T):=\left\{x\in H^{2}([0,T], \mathbf{C}^{n}): \begin{bmatrix}
		z(0)\\z(T)
		\end{bmatrix}\in\Lambda_0  \right\}.$$
Then $\mathcal{A}_{\Lambda_0}$ is a self-adjoint operator.
Note that  \eqref{eq:S-L equation}--\eqref{eq:original general boundary condition} are equivalent to
 $$
 \mathcal{A}_{\Lambda_0}x=\lambda D x ,    $$
  which can be written as  $$  D^{-\frac{1}{2}}\mathcal{A}_{\Lambda_0} D^{-\frac{1}{2}}y=\lambda y ,    $$
  where $y=D^{\frac{1}{2}}x$. Therefore,  without loss of generality, we always assume that $D=I_n$.

  The spectrum of $
 \mathcal{A}_{\Lambda_0}$ is bounded from below and consists of discrete eigenvalues, which are listed as follows:
  $$\lambda_1(\Lambda_0)\leq \cdots\leq \lambda_j(\Lambda_0)\leq\cdots$$
   counting multiplicities, with   $\lambda_j(\Lambda_0)\to \infty$ as $j\to\infty$. Thus the $j$-th  eigenvalue $\lambda_j$ can be regarded as a function $\lambda_j:\text{Lag}(\mathcal{V}, \Omega)\to\mathbf{R}$ in the sequel.  $\lambda_j$ is  not always continuously dependent on $\Lambda_0$. Recently, Kong, Wu and Zettl completely characterized the discontinuity of $\lambda_j$ for $1$-dimensional case in \cite{KWZ}, while  we  characterized it for $n$-dimensional case with $n\geq2$ in \cite{HLWZ}. In fact, discontinuity  may occur only at such boundary condition $\Lambda_0$ that $$\lim_{s\to 0^\pm}\dim(\Lambda_s\cap\Lambda_D)\neq\dim(\Lambda_0\cap\Lambda_D),$$
where $\Lambda_s$, $s\in[-\epsilon, +\epsilon]$, is  a continuous path in $\text{Lag}(\mathcal{V},\Omega)$.
Near such boundary condition $\Lambda_0$,  the $j$-th eigenvalue always jumps in certain directions.
     In this paper, as a continuous work of  \cite{HLWZ,KWZ}, we use the Maslov index to count the jump number of $\lambda_j$.
  For readers' convenience, we give a brief introduction of the Maslov index in Section 2.

 To describe the discontinuity in the framework of Lagrangian subspaces,  we consider a continuous path $\Lambda_s\in \text{Lag}(\mathcal{V},\Omega)$, $s\in[-\epsilon, +\epsilon]$, with the isolated singularity at $s=0$.  More precisely,
  \bea \dim \Lambda_s\cap \Lambda_D=c_-\ \textrm{for}\ s\in[-\epsilon, 0), \  \ \dim \Lambda_0\cap \Lambda_D=c_0, \   \ \dim \Lambda_s\cap \Lambda_D=c_+ \ \textrm{for}\ s\in(0,+\epsilon] , \label{1.4} \eea  and
  \bea \mu(\Lambda_D, \Lambda_s, s\in[-\epsilon,0])=-k_-, \quad   \mu(\Lambda_D, \Lambda_s, s\in[0, +\epsilon])=k_+,   \label{1.5}  \eea
   where  $\mu(\cdot,\cdot,\cdot)$ is the Maslov index, see Definition \ref{defi2.1}.

 For convenience, we  set $\lambda_j=-\infty$ for  $j\leq 0$. Our first main result is stated as follows.

  \begin{thm}\label{thm: Maslov index continu}
 Let $\Lambda_s\in  \text{Lag}(\mathcal{V}, \Omega)$, $s\in[-\epsilon,\epsilon]$, be a continuous path, and satisfy \eqref{1.4}--\eqref{1.5}.   Then $\lambda_j(\Lambda_\cdot)$ is continuous on $s\in[-\epsilon,0)\cup(0, +\epsilon]$ and
  \begin{equation} \label{jump12}
 \lim_{s\to 0^-} \lambda_j(\Lambda_s)=\lambda_{j-k_-}(\Lambda_0), \quad \lim_{s\to 0^+} \lambda_j(\Lambda_s)=\lambda_{j-k_+}(\Lambda_0).
 \end{equation}
\end{thm}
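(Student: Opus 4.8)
The plan is to realise the eigenvalues of $\mathcal A_{\Lambda}$ as the intersection points of a single monotone path of Lagrangian subspaces with the boundary condition $\Lambda$, and to extract the jump from a two–parameter homotopy in which the singularity at $s=0$ is transported to $\lambda=-\infty$, where the solution path degenerates to $\Lambda_D$. Concretely, for each $\lambda\in\mathbf R$ I would consider the Lagrangian subspace of boundary data of solutions,
\[
\Gamma(\lambda)=\left\{\begin{bmatrix}z(0)\\ z(T)\end{bmatrix}:\ \mathcal A x=\lambda x\right\}\in\text{Lag}(\mathcal V,\Omega),
\]
and use three facts. (i) $\lambda$ is an eigenvalue of $\mathcal A_{\Lambda}$ of multiplicity $m$ iff $\dim(\Gamma(\lambda)\cap\Lambda)=m$. (ii) The path $\lambda\mapsto\Gamma(\lambda)$ is monotone: its crossing form against any fixed Lagrangian is sign–definite, the sign being dictated by the positivity of $P$ and of $D=I_n$; hence $\lambda_j(\Lambda)$ is exactly the value of $\lambda$ at which $\Gamma(\lambda)$ makes its $j$-th crossing with $\Lambda$. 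Since on each of $[-\epsilon,0)$ and $(0,\epsilon]$ the dimension $\dim(\Lambda_s\cap\Lambda_D)$ is constant and no singularity occurs, these crossings vary continuously there, which already gives the continuity of $\lambda_j(\Lambda_\cdot)$ away from $s=0$. (iii) $\Gamma(\lambda)\to\Lambda_D$ as $\lambda\to-\infty$, so that $\Lambda_D$ is the ``value of the path at $\lambda=-\infty$''; this is precisely why the Dirichlet subspace governs the jump.

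The core step is a conservation law coming from homotopy invariance of the Maslov index. Fix a regular value $b\notin\text{spec}\,\mathcal A_{\Lambda_0}$ and write $N_{\le b}(\Lambda):=\#\{j:\lambda_j(\Lambda)\le b\}$. For a fixed $s'\in[-\epsilon,0)$ I would apply homotopy invariance to the continuous two–parameter family $(\lambda,s)\mapsto(\Gamma(\lambda),\Lambda_s)$ over the rectangle $[a,b]\times[s',0]$: the family being defined on the whole contractible rectangle, the Maslov indices along the four sides sum to zero. By monotonicity the two horizontal sides give $N_{(a,b]}(\Lambda_{s'})$ and $N_{(a,b]}(\Lambda_0)$; the vertical side at $\lambda=b$ gives $\mu(\Gamma(b),\Lambda_s,\ s\in[s',0])$; and, letting $a\to-\infty$ and invoking $\Gamma(a)\to\Lambda_D$ together with $N_{(a,b]}\to N_{\le b}$ for the fixed boundary conditions $\Lambda_0,\Lambda_{s'}$, the vertical side at $\lambda=-\infty$ gives $\mu(\Lambda_D,\Lambda_s,\ s\in[s',0])$. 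Matching the fixed sign of the crossing form, this produces
\[
N_{\le b}(\Lambda_0)-N_{\le b}(\Lambda_{s'})=\mu(\Lambda_D,\Lambda_s,\ s\in[s',0])-\mu(\Gamma(b),\Lambda_s,\ s\in[s',0]),
\]
in which the term $\mu(\Lambda_D,\cdot)$ is exactly what records eigenvalues escaping to $-\infty$.

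I would then let $s'\to0^-$. As $b$ is regular for $\Lambda_0$, the Lagrangian $\Gamma(b)$ meets no $\Lambda_s$ for $s$ near $0$, so $\mu(\Gamma(b),\Lambda_s,\ s\in[s',0])\to0$. As $\dim(\Lambda_s\cap\Lambda_D)=c_-$ is constant on $[-\epsilon,0)$, the index $\mu(\Lambda_D,\Lambda_s,\ s\in[s',0])$ is concentrated at $s=0$ and therefore equals $\mu(\Lambda_D,\Lambda_s,\ s\in[-\epsilon,0])=-k_-$ by \eqref{1.4}--\eqref{1.5}. Hence $\lim_{s\to0^-}N_{\le b}(\Lambda_s)=N_{\le b}(\Lambda_0)+k_-$ for every regular $b$. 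Comparing counting functions and using the convention $\lambda_j=-\infty$ for $j\le0$, this identity of counting functions is equivalent to $\lim_{s\to0^-}\lambda_j(\Lambda_s)=\lambda_{j-k_-}(\Lambda_0)$; the right–hand limit in \eqref{jump12} follows from the identical argument applied on $[0,\epsilon]$, using $\mu(\Lambda_D,\Lambda_s,\ s\in[0,\epsilon])=k_+$ from \eqref{1.5}.

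The main obstacle is the bookkeeping at $\lambda=-\infty$. One must rigorously identify the limiting Lagrangian of $\Gamma(\lambda)$ with $\Lambda_D$ and then reconcile three sign/endpoint conventions---the crossing form of the monotone path $\Gamma$, the definition of the Maslov index $\mu$ (Definition \ref{defi2.1}, including its endpoint contributions), and the eigenvalue counting function---so that the degenerate intersection data $c_\pm,c_0$ are converted into precisely $k_\pm$. Two points require particular care: eigenvalues running off to $-\infty$ as $s\to0$ (which forces one to use $N_{\le b}$ rather than a count over a fixed finite window, and to take $a\to-\infty$ before $s'\to0$), and the fact that for $s<0$ the path $s\mapsto\Lambda_s$ lies entirely in the non-transverse stratum $\{\Lambda:\dim(\Lambda\cap\Lambda_D)=c_-\}$, so that the localisation of $\mu(\Lambda_D,\Lambda_\cdot)$ at the endpoint $s=0$ must be justified from the crossing–form definition. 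Both phenomena are exactly the ones isolated by the hypotheses \eqref{1.4}--\eqref{1.5}.
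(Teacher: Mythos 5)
Your route is genuinely different from the paper's: the paper never uses the solution Lagrangian $\Gamma(\lambda)=Gr(\gamma_\lambda(T))$ in the proof of Theorem \ref{thm: Maslov index continu} at all. Instead it works variationally, writing $\lambda_j(\Lambda)$ by min--max for the index form on $H_\Lambda=H_0\oplus\mathcal{X}_{k_0}(\Lambda)$, comparing $\lambda_j(\Lambda_s)$ with the eigenvalues $\tilde\lambda_j(\Lambda_s)$ of the Hermitian matrix $A_s$ in the boundary frame (Propositions \ref{prop:low bound} and \ref{prop: eigen upper bound}), translating the hypothesis \eqref{1.5} via Lemma \ref{Mas Dir} into the statement that exactly $k_\pm$ of the $\tilde\lambda_j(\Lambda_s)$ blow up to $+\infty$ (Lemma \ref{asymptotic behavior tilde}), and finishing with Kato perturbation theory (Lemma \ref{con: continuous}). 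Your spectral-flow picture is attractive and, as a scheme, consistent (I checked it against the Robin-to-Dirichlet example in dimension one); but as written it has two genuine gaps, both located exactly at the points you defer.

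First, your ingredient (iii), $\Gamma(\lambda)\to\Lambda_D$ as $\lambda\to-\infty$, is not a free fact: it is \eqref{thm:lim to Lag1} of Theorem \ref{thm:lim to Lag}, whose proof in the paper invokes Proposition \ref{prop:low bound} and Theorem \ref{thm: Maslov index continu} itself. So inside this paper's architecture your argument is circular unless you supply an independent proof of (iii) (say, by exponential-dichotomy asymptotics of solutions as $\lambda\to-\infty$); you flag this as needing rigor but give no argument. Second, and more seriously, the step ``letting $a\to-\infty$ and invoking $\Gamma(a)\to\Lambda_D$, the vertical side at $\lambda=-\infty$ gives $\mu(\Lambda_D,\Lambda_s,s\in[s',0])$'' does not follow from convergence of Lagrangians. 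The Maslov index is not continuous in the reference Lagrangian, and Property II cannot be applied to the homotopy of references $\Gamma(a')\rightsquigarrow\Lambda_D$, because the endpoint intersection dimensions jump in the limit: $\dim(\Gamma(a)\cap\Lambda_0)=0$ and $\dim(\Gamma(a)\cap\Lambda_{s'})=0$ for $a$ below both spectra, whereas $\dim(\Lambda_D\cap\Lambda_0)=c_0>0$ and $\dim(\Lambda_D\cap\Lambda_{s'})=c_->0$. Equivalently, your conservation law holds for the finite rectangle $[a,b]\times[s',0]$ precisely because its four corners are transverse; the rectangle $[-\infty,b]\times[s',0]$ has two non-transverse corners, and a priori the identity you want could be off by endpoint corrections of size up to $c_0$ and $c_-$. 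That these corrections in fact vanish is true, but it is a statement about the \emph{direction} in which the eigenvalues of the relative unitaries leave $1$ at those corners (it follows from the sign-definiteness of the crossing form of $\Gamma$ together with the $e^{-is_0}$-convention in Definition \ref{defi2.1}); this corner analysis is the real content of the theorem in your approach, and it is exactly what is missing. The same remark applies to your one-line justification of continuity on $[-\epsilon,0)\cup(0,\epsilon]$: it needs either the localized version of your counting identity (with zero local Maslov index) or, as in the paper, a uniform lower bound plus Kato's theorem.
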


 By Corollary \ref{cor2.5}, $k_\pm$ are non-negative.  Our new contribution is that  the jump number $n^+-n_0^+$ in Theorem 7.1 of \cite{HLWZ} is exactly the Maslov index $k_\pm$ in Theorem \ref{thm: Maslov index continu}.
 The  idea of the proof of Theorem \ref{thm: Maslov index continu}  is to express the  $j$-th  eigenvalue by the index form. We  refer the readers to  \cite{HWY} for the introduction of index form. Then we study the monotone property of index form and give proper estimates for the eigenvalues.

The  range of the $j$-th eigenvalue $\lambda_j$ on the whole space of  boundary conditions   was given in Theorem 4.1 of \cite{KWZ} for $1$-dimensional Sturm-Liouville problems. To the best of our knowledge, there are no results for  high dimensional case. Define  the  $r$-th layer on $\text{Lag}(\mathcal{V},\Omega)$ to be
\begin{equation*}
\Sigma_{r}=\{\Lambda\in \text{Lag}(\mathcal{V},\Omega):\dim \left(\Lambda\cap\Lambda_D\right)=r\},
\end{equation*}
 where  $0\leq r\leq 2n$.
Our second result is  to determine the sharp range of  $\lambda_j$ on each layer of  $\text{Lag}(\mathcal{V},\Omega)$ for $n$-dimensional Sturm-Liouville problems.

\begin{thm}\label{thm: eigen range} Fix any $j\geq1$ and $0\leq r\leq 2n$.
Let
$$\lambda_{j-(2n-r)-b_1}(\Lambda_D)=\cdots=\lambda_{j-(2n-r)}(\Lambda_D)=\cdots=\lambda_{j-(2n-r)+b_2}(\Lambda_D)$$
with multiplicity to be $b_1+b_2+1$, and
$$\lambda_{j-c_1}(\Lambda_D)=\cdots=\lambda_{j}(\Lambda_D)=\cdots=\lambda_{j+c_2}(\Lambda_D)$$
with multiplicity to be $c_1+c_2+1$, where $b_i, c_i\geq 0$, $i=1,2$.
If  $j\leq 2n-r$, then we have two cases.

Case 1:  $r\leq c_2$.
 $$\lambda_j(\Sigma_r)=(-\infty,\lambda_j(\Lambda_D)).$$

Case 2: $r> c_2$.
$$\lambda_j(\Sigma_r)=(-\infty,\lambda_j(\Lambda_D)].$$

If  $j> 2n-r$, then we have four cases.

Case 1:  $r\leq \min\{b_1,c_2\}.$
 $$\lambda_j(\Sigma_r)=(\lambda_{j-(2n-r)}(\Lambda_D),\lambda_j(\Lambda_D)).$$

 Case 2:  $c_2<r\leq b_1.$
 $$\lambda_j(\Sigma_r)=(\lambda_{j-(2n-r)}(\Lambda_D),\lambda_j(\Lambda_D)].$$

Case 3:  $b_1<r\leq c_2$.
 $$\lambda_j(\Sigma_r)=[\lambda_{j-(2n-r)}(\Lambda_D),\lambda_j(\Lambda_D)).$$

Case 4:  $r>\max\{b_1,c_2\}$.
 $$\lambda_j(\Sigma_r)=[\lambda_{j-(2n-r)}(\Lambda_D),\lambda_j(\Lambda_D)].$$

\end{thm}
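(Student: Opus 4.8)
The plan is to reduce Theorem \ref{thm: eigen range} to three assertions: that $\lambda_j(\Sigma_r)$ is an interval, that its endpoints are $\lambda_{j-(2n-r)}(\Lambda_D)$ and $\lambda_j(\Lambda_D)$, and that each endpoint belongs to the interval precisely under the stated inequalities. For the first assertion, note that on the single layer $\Sigma_r$ the number $\dim(\Lambda\cap\Lambda_D)\equiv r$ is constant, so by the discontinuity criterion recalled in the Introduction the function $\lambda_j$ has no jump along any path contained in $\Sigma_r$; hence $\lambda_j|_{\Sigma_r}$ is continuous. Since each layer $\Sigma_r$ is path-connected (the strata of $\text{Lag}(\mathcal V,\Omega)$ relative to the fixed Lagrangian $\Lambda_D$ are connected), $\lambda_j(\Sigma_r)$ is a connected subset of $\mathbf R$, i.e. an interval. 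When $r=2n$ we have $\Sigma_{2n}=\{\Lambda_D\}$ and the interval degenerates to the point $\lambda_j(\Lambda_D)=\lambda_{j-(2n-r)}(\Lambda_D)$, consistent with the statement.

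Next I would establish the two-sided bound $\lambda_{j-(2n-r)}(\Lambda_D)\le\lambda_j(\Lambda)\le\lambda_j(\Lambda_D)$ for every $\Lambda\in\Sigma_r$. The form domain of $\mathcal A_{\Lambda_D}$ is $H^1_0([0,T],\mathbf C^n)$, which is contained in the form domain of every self-adjoint realization $\mathcal A_\Lambda$; moreover, since the projection of $\Lambda$ onto the position boundary data $(x(0),x(T))$ has kernel $\Lambda\cap\Lambda_D$, this form domain enlarges $H^1_0$ by exactly $2n-r$ dimensions. The min--max principle applied to the index form (see \cite{HWY}) then gives the upper bound (Dirichlet maximality) and the interlacing lower bound with shift $2n-r$. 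To see the bounds are sharp, I would apply Theorem \ref{thm: Maslov index continu} to continuous paths $\Lambda_s\to\Lambda_D$ with $\Lambda_s\in\Sigma_r$ for $s\neq0$: at $s=0$ the intersection with $\Lambda_D$ jumps from $r$ to $2n$, and since the associated crossing form lives on the $(2n-r)$-dimensional space of new intersection directions, the Maslov index $k$ appearing in \eqref{1.5} can be made to realize the two extreme values $0$ and $2n-r$ by choosing the direction of approach (both are nonnegative by Corollary \ref{cor2.5}). Formula \eqref{jump12} then yields the limits $\lambda_j(\Lambda_D)$ and $\lambda_{j-(2n-r)}(\Lambda_D)$, identifying them as the supremum and infimum of $\lambda_j$ on $\Sigma_r$.

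The decisive and most delicate step is to decide whether each endpoint is attained inside $\Sigma_r$ or only approached as $\Lambda\to\Lambda_D\in\Sigma_{2n}$. Write $\ell=\lambda_j(\Lambda_D)$ and let $\mathcal L_\ell=\{(z(0),z(T)):\mathcal A x=\ell x\}\subset\mathbf C^{4n}$ be the Lagrangian of boundary data of solutions at the level $\ell$, so that $\ell$ is an eigenvalue of $\mathcal A_\Lambda$ of multiplicity $\dim(\Lambda\cap\mathcal L_\ell)$, while $\dim(\Lambda_D\cap\mathcal L_\ell)=c_1+c_2+1$. Because $\lambda_j(\Lambda)\le\ell$ always, the upper endpoint is attained iff some $\Lambda\in\Sigma_r$ satisfies $\#\{k:\lambda_k(\Lambda)<\ell\}\le j-1$; since the eigenvalue counting function below $\ell$ equals the Maslov index of $\lambda\mapsto\mathcal L_\lambda$ on $(-\infty,\ell)$ relative to $\Lambda$, this reduces to a realization problem for that index over the layer. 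I plan to analyze it by a local crossing-form computation at $\Lambda_D$ on the $(c_1+c_2+1)$-dimensional space $\Lambda_D\cap\mathcal L_\ell$, splitting it into the $c_1+1$ directions at Dirichlet index $\le j$ and the $c_2$ directions at index $>j$, together with the dimension constraint $\dim(\Lambda\cap\mathcal L_\ell)\le 2n-r+\dim(\Lambda_D\cap\mathcal L_\ell)$ coming from $\Lambda\cap\Lambda_D\cap\mathcal L_\ell\subseteq\Lambda_D\cap\mathcal L_\ell$. The expected outcome is that the count $j-1$ is realizable by an honest Lagrangian in $\Sigma_r$ exactly when $r>c_2$, giving the upper endpoint; the symmetric argument at the level $\lambda_{j-(2n-r)}(\Lambda_D)$, using its downward multiplicity $b_1$, should yield the lower endpoint precisely when $r>b_1$. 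In the range $j\le 2n-r$ the lower endpoint is $-\infty$ and only the condition $r$ versus $c_2$ survives, which accounts for the reduction from four cases to two.

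I expect the main obstacle to be this last attainment analysis: converting the soft statement ``the endpoint is the supremum or infimum'' into the sharp arithmetic dichotomies $r>c_2$ and $r>b_1$. A bare dimension count is insufficient here, since the constraint on $\dim(\Lambda\cap\mathcal L_\ell)$ does not by itself obstruct attainment; one must instead control, uniformly over the noncompact layer $\Sigma_r$, the exact eigenvalue count below a fixed Dirichlet value, and then certify by an explicit construction that the extremal count is realized at an interior $\Lambda\in\Sigma_r$ rather than only in the limit at $\Lambda_D$. This requires careful bookkeeping of the Maslov crossing form at $\Lambda_D$ restricted to $\Lambda_D\cap\mathcal L_\ell$ together with the way the $r$-dimensional intersection $\Lambda\cap\Lambda_D$ meets the above- and below-index parts of $\Lambda_D\cap\mathcal L_\ell$; it is exactly this interaction that the inequalities $r\lessgtr c_2$ and $r\lessgtr b_1$ encode.
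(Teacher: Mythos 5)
Your first two steps are correct and take a genuinely different route from the paper's Proposition \ref{prop: range with out endpoints consideration}: the paper obtains both the two-sided bound $\lambda_{j-(2n-r)}(\Lambda_D)\le\lambda_j(\Lambda)\le\lambda_j(\Lambda_D)$ and the open-interval inclusion from a single explicit monotone family $\Lambda_s$ of the form \eqref{t G-path-A-s} with $A_s=A_0+\tan(s)I_{2n-r}$, whose Maslov indices on the two halves are computed in \eqref{maslov-index-pos-neg}, combined with Theorem \ref{thm: Maslov index continu}; you instead use form-domain interlacing (the Dirichlet form domain $H_0$ has codimension $2n-r$ in $H_\Lambda$ and the forms agree there) plus continuity of $\lambda_j$ on the stratum and connectedness of $\Sigma_r$. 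This is legitimate, with two caveats: connectedness of $\Sigma_r$ is asserted rather than proved (it is true, via the unitary picture, but it is an extra input the paper's argument does not need), and your claim that the Maslov index of a path in $\Sigma_r$ ending at $\Lambda_D$ ``can be made to realize $0$ and $2n-r$ by choosing the direction of approach'' is exactly what the paper's explicit path supplies, so you would still have to write such a path down.

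The genuine gap is the third step, which is the actual content of the theorem: deciding whether each endpoint lies in $\lambda_j(\Sigma_r)$ according to $r\lessgtr b_1$ and $r\lessgtr c_2$. Your text here is a plan, not a proof (``I plan to analyze\dots'', ``the expected outcome is\dots''), and you yourself flag it as the main unresolved obstacle. Two concrete pieces are missing. For non-attainment (e.g.\ $r\le c_2$, Proposition \ref{right endpoint}(1)), the paper argues by contradiction: if $\lambda_j(\Lambda_0)=\lambda_j(\Lambda_D)=:\lambda$ for some $\Lambda_0\in\Sigma_r$, then the bounds of Proposition \ref{prop: range with out endpoints consideration} together with monotonicity along the family $A_s=A_0+\tan(s)I_{2n-r}$ force $\lambda$ to be an eigenvalue of $\mathcal{A}_{\Lambda_s}$ of multiplicity at least $r+1$ for all small $s$, i.e.\ $\dim\left(Gr(\gamma_\lambda(T))\cap\Lambda_s\right)\ge r+1$, contradicting Lemma \ref{small per}; your crossing-form bookkeeping contains no substitute for this lemma. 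For attainment (e.g.\ $r>c_2$), the paper exhibits explicit Lagrangians in $\Sigma_r$ built from $\beta_0=Gr(\gamma_\lambda(T))\cap\Lambda_D$ (or an $r$-dimensional subspace of it), a complement inside $\Lambda_D$, and the block $W_{s_0}$ of \eqref{Ws0}, in two sub-cases according to whether $r\ge c_1+c_2+1$, and then pins down the index of $\lambda$ in the spectrum of these operators using Kato's perturbation theorem together with Theorem \ref{thm: Maslov index continu} and Lemma \ref{small per}; you promise ``an explicit construction'' but never give one. Finally, your reduction of attainment to a ``realization problem'' tacitly uses the identity between the eigenvalue counting function below $\ell$ and a Maslov index of $\lambda\mapsto Gr(\gamma_\lambda(T))$ relative to $\Lambda$; that identity is nowhere established in the paper and would itself require the normalization at $\lambda=-\infty$, i.e.\ Theorem \ref{thm:lim to Lag}, whose proof in turn rests on Theorem \ref{thm: Maslov index continu}. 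As written, the proposal proves the soft inclusions but not the case dichotomies that constitute the theorem.
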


Theorem \ref{thm: eigen range} indicates that the ``left-multiplicity" $b_1$ of $\lambda_{j-(2n-r)}(\Lambda_D)$, the ``right-multiplicity" $c_2$ of $\lambda_j(\Lambda_D)$ and the layer's number $r$ determine whether the endpoints $\lambda_{j-(2n-r)}(\Lambda_D), \lambda_j(\Lambda_D)\in \lambda_j(\Sigma_r)$ or not.
Then the range of $\lambda_j$ on the whole space $\text{Lag}(\mathcal{V},\Omega)$ and on the $0$-th layer $\Sigma_0$  is a direct consequence.

\begin{cor} \label{whole space} For any $j\geq1$, we have

(1) $\lambda_j(\text{Lag}(\mathcal{V},\Omega))=(\lambda_{j-2n}(\Lambda_D), \lambda_j(\Lambda_D)]$;

(2)  $\lambda_j(\Sigma_{0})=(\lambda_{j-2n}(\Lambda_D), \lambda_j(\Lambda_D))$.
\end{cor}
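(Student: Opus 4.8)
The plan is to deduce both statements directly from Theorem \ref{thm: eigen range}, viewing $\lambda_j(\text{Lag}(\mathcal{V},\Omega))$ as a union over layers and $\lambda_j(\Sigma_0)$ as a single layer. Since every Lagrangian subspace $\Lambda$ has a well-defined dimension $\dim(\Lambda\cap\Lambda_D)\in\{0,1,\dots,2n\}$, we have the decomposition $\text{Lag}(\mathcal{V},\Omega)=\bigcup_{r=0}^{2n}\Sigma_r$, hence $\lambda_j(\text{Lag}(\mathcal{V},\Omega))=\bigcup_{r=0}^{2n}\lambda_j(\Sigma_r)$, where each $\lambda_j(\Sigma_r)$ is one of the intervals listed in Theorem \ref{thm: eigen range}. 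Part (2) is then the special case $r=0$, and part (1) amounts to computing this union.

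For part (2) I would set $r=0$. If $j\le 2n$ we are in the branch $j\le 2n-r$, and since $r=0\le c_2$ (because $c_2\ge 0$) Case 1 applies, giving $\lambda_j(\Sigma_0)=(-\infty,\lambda_j(\Lambda_D))$; here $j-2n\le 0$ forces $\lambda_{j-2n}(\Lambda_D)=-\infty$ by the convention $\lambda_i=-\infty$ for $i\le 0$, so this is exactly $(\lambda_{j-2n}(\Lambda_D),\lambda_j(\Lambda_D))$. If instead $j>2n$ we are in the branch $j>2n-r$, and since $r=0\le\min\{b_1,c_2\}$ Case 1 again applies, giving directly $\lambda_j(\Sigma_0)=(\lambda_{j-2n}(\Lambda_D),\lambda_j(\Lambda_D))$. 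Either way (2) follows.

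For part (1) I would first show $\lambda_j(\Sigma_r)\subseteq(\lambda_{j-2n}(\Lambda_D),\lambda_j(\Lambda_D)]$ for every $r$. The upper bound is immediate, since the right endpoint in every case of Theorem \ref{thm: eigen range} is $\lambda_j(\Lambda_D)$. For the lower bound, note $\lambda_{j-(2n-r)}(\Lambda_D)\ge\lambda_{j-2n}(\Lambda_D)$ because eigenvalues are nondecreasing in the index. In the branch $j\le 2n-r$ the range is left-unbounded with no finite left endpoint, and there $\lambda_{j-2n}(\Lambda_D)=-\infty$, so no real point $\le\lambda_{j-2n}(\Lambda_D)$ is attained. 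In the branch $j>2n-r$, if $\lambda_{j-(2n-r)}(\Lambda_D)>\lambda_{j-2n}(\Lambda_D)$ the interval already lies strictly above $\lambda_{j-2n}(\Lambda_D)$; and if equality holds, then $\lambda_{j-(2n-r)}(\Lambda_D)$ coincides with the eigenvalue $r$ indices to its left, so its left-multiplicity obeys $b_1\ge r$, placing us in Case 1 or 2 where the left endpoint is open and hence excluded. Thus no layer attains $\lambda_{j-2n}(\Lambda_D)$, and the union is contained in $(\lambda_{j-2n}(\Lambda_D),\lambda_j(\Lambda_D)]$.

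For the reverse inclusion, part (2) already gives $\lambda_j(\Sigma_0)=(\lambda_{j-2n}(\Lambda_D),\lambda_j(\Lambda_D))$, while the top layer is a single point: $\Sigma_{2n}=\{\Lambda_D\}$, since $\dim(\Lambda\cap\Lambda_D)=2n=\dim\Lambda_D$ forces $\Lambda=\Lambda_D$, whence $\lambda_j(\Sigma_{2n})=\{\lambda_j(\Lambda_D)\}$ (equivalently, $r=2n>\max\{b_1,c_2\}$ puts us in Case 4). The union of these two layers already yields $(\lambda_{j-2n}(\Lambda_D),\lambda_j(\Lambda_D)]$, which together with the containment above proves (1). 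The only delicate point in the argument is the lower-endpoint exclusion: one must check that whenever the left endpoint of a layer's range slides down to $\lambda_{j-2n}(\Lambda_D)$, the relevant case of Theorem \ref{thm: eigen range} is automatically left-open, and this is precisely what the multiplicity bookkeeping $b_1\ge r$ guarantees.
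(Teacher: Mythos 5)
Your proof is correct and takes essentially the same route as the paper: the paper obtains Corollary \ref{whole space} as a ``direct consequence'' of Theorem \ref{thm: eigen range}, which is exactly your argument of decomposing $\text{Lag}(\mathcal{V},\Omega)=\bigcup_{r=0}^{2n}\Sigma_r$, reading off the $r=0$ and $r=2n$ layers, and checking the case structure. Your multiplicity bookkeeping (equality $\lambda_{j-(2n-r)}(\Lambda_D)=\lambda_{j-2n}(\Lambda_D)$ forces $b_1\geq r$, hence a left-open case) correctly supplies the one detail the paper leaves implicit.
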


 Corollary \ref{whole space} (1)  generalizes Theorem 4.1 in \cite{KWZ} for $1$-dimensional result to any dimension.  As an example of  Theorem \ref{thm: eigen range},  we provide the sharp range of $\lambda_j$ on each layer for $1$-dimensional case, which is more  accurate  than the conclusions  in \cite{KWZ}:

\begin{cor}
\label{rem-1-dim-case}
For any given
$1$-dimensional Sturm-Liouville equation, we have for any $j\geq1$,

(1) $\lambda_j(\Sigma_{0})=(\lambda_{j-2}(\Lambda_D), $ $ \lambda_j(\Lambda_D))$;

(2) $\lambda_1(\Sigma_{1})=(-\infty, \lambda_1(\Lambda_D)]$, and  $\lambda_j(\Sigma_{1})=[\lambda_{j-1}(\Lambda_D), \lambda_j(\Lambda_D)]$ for $j\geq2$;

(3) $\lambda_j(\Sigma_{2})=\{\lambda_{j}(\Lambda_D)\}$.
\end{cor}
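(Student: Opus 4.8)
The plan is to derive Corollary~\ref{rem-1-dim-case} as a direct specialization of Theorem~\ref{thm: eigen range} to the scalar case $n=1$, for which $2n=2$ and the only layers are $\Sigma_0,\Sigma_1,\Sigma_2$. The single ingredient needed beyond Theorem~\ref{thm: eigen range} is that, for a one-dimensional Sturm--Liouville equation, every Dirichlet eigenvalue is simple; once this is established, all of the multiplicity parameters $b_1,b_2,c_1,c_2$ appearing in Theorem~\ref{thm: eigen range} vanish, and each of the three assertions follows by reading off the appropriate case.

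First I would record the simplicity of the scalar Dirichlet spectrum. For $n=1$ the equation \eqref{eq:S-L equation} is a second-order scalar ODE, so for any fixed $\lambda$ its solution space is two-dimensional, and the solutions satisfying $x(0)=0$ form a one-dimensional subspace (they are determined up to a scalar by $y(0)=P(0)\dot x(0)$). Imposing in addition $x(T)=0$ can only cut this down further, so the Dirichlet eigenspace associated with $\lambda_j(\Lambda_D)$ is at most one-dimensional. Hence the Dirichlet eigenvalues are strictly increasing, $\lambda_1(\Lambda_D)<\lambda_2(\Lambda_D)<\cdots$, and in particular $c_2=0$ for every $j\geq1$, while $b_1=b_2=c_1=0$ whenever the relevant indices are positive.

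Next I would substitute these values into Theorem~\ref{thm: eigen range} layer by layer, using the convention $\lambda_j=-\infty$ for $j\le 0$. For $\Sigma_0$ (so $r=0$, $2n-r=2$): when $j\le 2$ we are in the branch $j\le 2n-r$, and $r=0\le c_2$ puts us in Case~1, giving $(-\infty,\lambda_j(\Lambda_D))=(\lambda_{j-2}(\Lambda_D),\lambda_j(\Lambda_D))$ since $\lambda_{j-2}(\Lambda_D)=-\infty$; when $j\ge 3$ we are in the branch $j>2n-r$ with $r=0\le\min\{b_1,c_2\}$, again Case~1, giving $(\lambda_{j-2}(\Lambda_D),\lambda_j(\Lambda_D))$. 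This proves (1). For $\Sigma_1$ (so $r=1$, $2n-r=1$): when $j=1$ we are in $j\le 2n-r$ with $r=1>c_2=0$, i.e. Case~2, yielding $(-\infty,\lambda_1(\Lambda_D)]$; when $j\ge 2$ we are in $j>2n-r$ with $r=1>\max\{b_1,c_2\}=0$, i.e. Case~4, yielding $[\lambda_{j-1}(\Lambda_D),\lambda_j(\Lambda_D)]$. This proves (2). Finally for $\Sigma_2$ (so $r=2=2n$, $2n-r=0$): every $j\ge1$ satisfies $j>2n-r=0$ and $r=2>\max\{b_1,c_2\}=0$, so Case~4 applies and gives $[\lambda_{j-(2n-r)}(\Lambda_D),\lambda_j(\Lambda_D)]=[\lambda_j(\Lambda_D),\lambda_j(\Lambda_D)]=\{\lambda_j(\Lambda_D)\}$, proving (3).

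Since Theorem~\ref{thm: eigen range} is already in hand, there is no genuine analytic obstacle here; the work is entirely bookkeeping. The one point requiring care is tracking the index conventions at the left endpoint: one must verify that the left-hand index $j-(2n-r)$ is positive precisely in those cases where a finite left endpoint $\lambda_{j-(2n-r)}(\Lambda_D)$ actually appears (namely $\Sigma_0$ with $j\ge3$, $\Sigma_1$ with $j\ge2$, and $\Sigma_2$), so that invoking simplicity to set $b_1=0$ is legitimate, and that otherwise the convention $\lambda_j=-\infty$ collapses the left endpoint to $-\infty$ as required. With this checked, the three statements follow at once.
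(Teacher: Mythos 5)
Your proposal is correct and follows exactly the route the paper intends: the paper states Corollary \ref{rem-1-dim-case} as a direct specialization of Theorem \ref{thm: eigen range}, using precisely the fact (noted in the paper before Corollary \ref{independent of multiplicity}) that Dirichlet eigenvalues have multiplicity at most $n$, hence are simple when $n=1$, so that $b_1=b_2=c_1=c_2=0$ and the cases read off as you describe. Your bookkeeping of the layers $r=0,1,2$ and of the convention $\lambda_j=-\infty$ for $j\le 0$ is accurate, so there is nothing to correct.
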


Since the multiplicity of an eigenvalue of $\mathcal{A}_{\Lambda_D}$ is at most $n$, we get the following result.
\begin{cor}
\label{independent of multiplicity}
 Let $n\leq r\leq 2n$. Then $\lambda_j(\Sigma_{r})=(-\infty, \lambda_j(\Lambda_D)]$ for any $1\leq j\leq 2n-r$, and
$\lambda_j(\Sigma_{r})=[\lambda_{j-(2n-r)}(\Lambda_D),$ $ \lambda_j(\Lambda_D)]$ for any $j> 2n-r$.
\end{cor}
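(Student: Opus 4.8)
The plan is to deduce the corollary directly from Theorem~\ref{thm: eigen range} by showing that, once $r\geq n$, the layer number $r$ automatically exceeds both the left-multiplicity $b_1$ of $\lambda_{j-(2n-r)}(\Lambda_D)$ and the right-multiplicity $c_2$ of $\lambda_j(\Lambda_D)$ appearing in that theorem. Consequently only the ``closed--endpoint'' cases can occur, and the remaining bookkeeping collapses the statement to the two formulas claimed.

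First I would establish the multiplicity bound announced just before the corollary: every eigenvalue of $\mathcal{A}_{\Lambda_D}$ has multiplicity at most $n$. Indeed, for a fixed $\lambda$ the equation $\mathcal{A}x=\lambda x$ is a second-order linear system on $\mathbf{C}^n$, whose solution space is $2n$-dimensional. The Dirichlet condition $x(0)=0$ imposes $n$ independent linear constraints and thus cuts this space down to dimension exactly $n$; requiring in addition $x(T)=0$ can only shrink it further. Hence the eigenspace of $\mathcal{A}_{\Lambda_D}$ at any $\lambda$ has dimension at most $n$.

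Next I would translate this bound into the parameters of Theorem~\ref{thm: eigen range}. By definition, $\lambda_{j-(2n-r)}(\Lambda_D)$ is an eigenvalue of $\mathcal{A}_{\Lambda_D}$ of total multiplicity $b_1+b_2+1$, and $\lambda_j(\Lambda_D)$ has total multiplicity $c_1+c_2+1$. The multiplicity bound gives $b_1+b_2+1\leq n$ and $c_1+c_2+1\leq n$, whence in particular
\begin{equation*}
b_1\leq n-1,\qquad c_2\leq n-1.
\end{equation*}
Since we assume $r\geq n$, it follows that $r>b_1$ and $r>c_2$, that is, $r>\max\{b_1,c_2\}$.

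Finally I would read off the conclusion from Theorem~\ref{thm: eigen range}. In the regime $1\leq j\leq 2n-r$, the inequality $r>c_2$ places us in Case~2 of the two-case alternative, giving $\lambda_j(\Sigma_r)=(-\infty,\lambda_j(\Lambda_D)]$. In the regime $j>2n-r$, the inequalities $r>b_1$ and $r>c_2$ together place us in Case~4 of the four-case alternative, giving $\lambda_j(\Sigma_r)=[\lambda_{j-(2n-r)}(\Lambda_D),\lambda_j(\Lambda_D)]$. This is precisely the assertion of the corollary. I do not expect any genuine obstacle here: the only substantive input is the elementary multiplicity bound, and the rest is simply verifying which case of Theorem~\ref{thm: eigen range} is selected once $r\geq n$.
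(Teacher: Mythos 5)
Your proof is correct and follows essentially the same route as the paper, which derives the corollary from Theorem \ref{thm: eigen range} via the observation (stated just before the corollary) that eigenvalues of $\mathcal{A}_{\Lambda_D}$ have multiplicity at most $n$, forcing $r>\max\{b_1,c_2\}$ and hence the closed-endpoint cases. Your only addition is spelling out the standard ODE argument for the multiplicity bound, which the paper takes as known.
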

By Theorem  \ref{thm: eigen range}, we also get the following interesting fact.
\begin{cor}\label{rem-multiple-eigenvalue}
 Let   $\lambda_{{j_0}-(r_0-1)} ({\Lambda_D})=\cdots=\lambda_{j_0} ({\Lambda_D})$ with multiplicity to be $r_0$, where $1\leq r_0\leq n$ and $j_0\geq r_0$. Then  $\lambda_{j_0}(\Lambda)\equiv\lambda_{j_0}(\Lambda_D)$ for all $\Lambda\in\Sigma_{r}$, where $2n-r_0+1\leq r\leq 2n$.
\end{cor}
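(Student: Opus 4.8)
The plan is to obtain this corollary as an immediate specialization of Theorem \ref{thm: eigen range} with $j = j_0$, the entire content being the correct identification of the multiplicity parameters $b_1, b_2, c_1, c_2$ that appear there. First I would read these off from the hypothesis. Since the cluster $\lambda_{j_0-(r_0-1)}(\Lambda_D) = \cdots = \lambda_{j_0}(\Lambda_D)$ has multiplicity exactly $r_0$ and terminates at the index $j_0$, the eigenvalue $\lambda_{j_0}(\Lambda_D)$ has left-multiplicity $c_1 = r_0 - 1$ and right-multiplicity $c_2 = 0$.

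Next I would determine which branch of Theorem \ref{thm: eigen range} is in force. From the hypothesis $2n - r_0 + 1 \leq r$ we get $2n - r \leq r_0 - 1 < r_0 \leq j_0$, so $j_0 > 2n - r$ and we are in the four-case regime. The key observation is that the shifted index $j_0 - (2n-r)$ still lies inside the same eigenvalue cluster: because $0 \leq 2n - r \leq r_0 - 1$, we have $j_0 - r_0 + 1 \leq j_0 - (2n-r) \leq j_0$, whence $\lambda_{j_0-(2n-r)}(\Lambda_D) = \lambda_{j_0}(\Lambda_D)$. Consequently the left-multiplicity of $\lambda_{j_0-(2n-r)}(\Lambda_D)$ is $b_1 = \bigl(j_0 - (2n-r)\bigr) - (j_0 - r_0 + 1) = r - (2n - r_0 + 1)$.

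I would then verify that Case 4 applies. On one hand $r \geq 2n - r_0 + 1 \geq 1$ gives $r > 0 = c_2$; on the other hand the inequality $r > b_1$ is equivalent to $2n - r_0 + 1 > 0$, i.e.\ $r_0 \leq 2n$, which holds since $r_0 \leq n$. Thus $r > \max\{b_1, c_2\}$, and Case 4 of Theorem \ref{thm: eigen range} yields
$$\lambda_{j_0}(\Sigma_r) = [\lambda_{j_0-(2n-r)}(\Lambda_D), \lambda_{j_0}(\Lambda_D)] = [\lambda_{j_0}(\Lambda_D), \lambda_{j_0}(\Lambda_D)] = \{\lambda_{j_0}(\Lambda_D)\},$$
where the middle equality uses the coincidence of endpoints established above. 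This is precisely the assertion $\lambda_{j_0}(\Lambda) \equiv \lambda_{j_0}(\Lambda_D)$ for every $\Lambda \in \Sigma_r$.

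There is no deep obstacle here; the work is purely index bookkeeping. The only point demanding genuine care is confirming that one always lands in the closed–closed Case 4 rather than the open–closed Case 2. Since the two endpoints of the range collapse to a single value, Case 2 would force $\lambda_{j_0}(\Sigma_r)$ to be the empty set, which is impossible. The hypothesis $r_0 \leq n$ (equivalently the weaker $r_0 \leq 2n$, which is what actually forces $r > b_1$) is exactly what guarantees the correct, nonempty conclusion, and I would flag this as the one substantive check in the argument.
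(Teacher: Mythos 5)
Your proposal is correct and follows exactly the route the paper intends: the paper offers no separate argument, stating only that the corollary follows from Theorem \ref{thm: eigen range}, and your bookkeeping ($c_1=r_0-1$, $c_2=0$, $b_1=r-(2n-r_0+1)$, landing in Case 4 with coinciding endpoints) is the correct way to fill in that specialization. The parameter identifications and the verification $r>\max\{b_1,c_2\}$ all check out, so nothing is missing.
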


By the standard Legendre transformation, equation (\ref{eq:S-L equation}) with $D=I_n$ becomes
\begin{eqnarray}
\dot{z}(t)=J_n\mathcal{B}_\lambda(t)z(t), \label{eq:Hamilton system}
\end{eqnarray}
where
\begin{equation*}
\mathcal{B}_\lambda(t)=\begin{bmatrix} P^{-1}(t) &- P^{-1}(t)Q(t)\\-Q^T(t)P^{-1}(t)&Q^{T}(t)P^{-1}(t)Q(t)-R(t)+\lambda I_n \end{bmatrix}.
\end{equation*}
Let $\gamma_\lambda(t)$,  $t\in[0,T]$, be the fundamental solution of  \eqref{eq:Hamilton system}, that is, for   $\lambda\in\mathbf{C}$,
\begin{equation}\label{fundamental solution} \dot{\gamma}_\lambda(t)=J_n\mathcal{B}_\lambda(t)\gamma_\lambda(t), \quad \gamma_\lambda(0)=I_{2n}.\end{equation}
 It is well-known that for any $t\in[0,T]$,
$$ \gamma_\lambda(t)\in \Sp(2n):=\{M\in GL(\mathbf{R}^{2n}):M^TJ_nM=J_n  \}. $$

Since $\gamma_\lambda(T)\in\Sp(2n)$, it is obvious that $Gr(\gamma_\lambda(T))\in  \text{Lag}(\mathcal{V}, \Omega)$, where
   $$Gr(\gamma_\lambda(T)):=\{(x,\gamma_\lambda(T)x), x\in\mathbf{C}^{2n} \}$$
   is  the graph of $\gamma_\lambda(T)$.

\begin{thm}\label{thm:lim to Lag} Under the above notation, we have
 \begin{equation}\label{thm:lim to Lag1}
 \lim_{\lambda\to-\infty}Gr(\ga_\lambda(T))=\Lambda_D.
 \end{equation}
Furthermore, let $\lambda(s)=\tan (s)$ for $s\in [-\pi/2, +\pi/2]$, then
\begin{equation}\label{thm:lim to Lag2}
\mu(\Lambda_D,Gr(\gamma_{\lambda(s)}(T)), s\in[-\pi/2,-\pi/2+\epsilon])= 2n
\end{equation}
 for $\epsilon>0$ small enough.
\end{thm}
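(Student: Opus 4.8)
The plan is to prove the two assertions in turn, reducing both to a single object: the Neumann-to-Dirichlet map of the equation at energy $\lambda$. The starting point is that $\Lambda_N$ is a Lagrangian complement of $\Lambda_D$ in $\mathcal V$, since $\Lambda_N=\{y(0)=y(T)=0\}$ and $\Lambda_D=\{x(0)=x(T)=0\}$ meet only at $0$. Consequently every Lagrangian transverse to $\Lambda_N$ is the graph over $\Lambda_D$ of a linear map $(y(0),y(T))\mapsto(x(0),x(T))$. For $\lambda$ below the Neumann spectrum, $Gr(\gamma_\lambda(T))$ is transverse to $\Lambda_N$ (its intersection with $\Lambda_N$ consists of Neumann eigenfunctions), and the associated map $G_\lambda$ is precisely the Neumann-to-Dirichlet map: from the momenta $y(0),y(T)$ of a solution of $\mathcal A x=\lambda x$ it returns the positions $x(0),x(T)$. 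As $\Lambda_D$ is the graph of the zero map, \eqref{thm:lim to Lag1} reduces to showing $\|G_\lambda\|\to0$ as $\lambda\to-\infty$, convergence of the defining map being convergence of graphs in $\mathrm{Lag}(\mathcal V,\Omega)$.

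I would obtain this from an energy estimate. Pairing $\mathcal A x=\lambda x$ with $x$ and integrating by parts gives
\begin{equation*}
\langle y(T),x(T)\rangle-\langle y(0),x(0)\rangle=\int_0^T\!\big(\langle P\dot x,\dot x\rangle+2\Re\langle Qx,\dot x\rangle+\langle Rx,x\rangle\big)\,dt-\lambda\int_0^T|x|^2\,dt.
\end{equation*}
For $\lambda\le-\Lambda_0$ with $\Lambda_0$ large, using $P\ge p_0 I>0$ and Young's inequality to absorb the $Q,R$ terms, the right-hand side dominates $\tfrac{p_0}{2}\|\dot x\|_{L^2}^2+\tfrac{|\lambda|}{2}\|x\|_{L^2}^2$, while the left-hand side is at most $|y(0)||x(0)|+|y(T)||x(T)|$. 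The uniform trace bound then yields $\|\dot x\|_{L^2}\lesssim|Y|$ and $\|x\|_{L^2}\lesssim|\lambda|^{-1/2}|Y|$, where $|Y|=|y(0)|+|y(T)|$; feeding these into the sharp trace inequality $|x(0)|^2\lesssim T^{-1}\|x\|_{L^2}^2+\|x\|_{L^2}\|\dot x\|_{L^2}$ (and likewise at $T$) gives $|x(0)|^2+|x(T)|^2\lesssim|\lambda|^{-1/2}|Y|^2$. Hence $\|G_\lambda\|\to0$, proving \eqref{thm:lim to Lag1}. The one genuinely delicate point is to extract true decay of the boundary trace rather than mere boundedness, which is why the refined trace inequality, not the uniform $H^1$ trace bound, must be used in the last step.

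For \eqref{thm:lim to Lag2}, observe that $Gr(\gamma_\lambda(T))\cap\Lambda_D\neq0$ exactly when $\lambda$ is a Dirichlet eigenvalue, since such a vector is a solution with $x(0)=x(T)=0$. Because $\lambda(s)=\tan s\to-\infty$ as $s\to-\pi/2^+$ and the Dirichlet spectrum is bounded below, for $\epsilon$ small we have $\lambda(s)<\lambda_1(\Lambda_D)$ on $(-\pi/2,-\pi/2+\epsilon]$, so the path has no interior crossing with $\Lambda_D$; by \eqref{thm:lim to Lag1} the only crossing is the endpoint $s=-\pi/2$, where the continuously extended path equals $\Lambda_D$. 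Thus the whole index is the contribution of this single endpoint crossing.

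To evaluate it, note that in the chart above $Gr(\gamma_{\lambda(s)}(T))$ is the graph of the symmetric form $G_{\lambda(s)}$ over $\Lambda_D$, and the energy identity shows this form is positive definite for $\lambda\ll0$: its right-hand side equals $\int_0^T\langle P\dot x,\dot x\rangle\,dt+|\lambda|\int_0^T|x|^2\,dt$ plus lower-order terms, which is strictly positive unless $x\equiv0$, and $x\equiv0$ forces $y\equiv0$. Hence as $s$ increases from $-\pi/2$ the path leaves $\Lambda_D$ directly into the cone of positive-definite graphs, so the crossing form at $s=-\pi/2$, computed in this chart, is positive definite of rank $2n$. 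By the convention for the Maslov index (Definition \ref{defi2.1}, consistent with the nonnegativity in Corollary \ref{cor2.5}), such a left-endpoint crossing on all of $\Lambda_D$ with positive-definite crossing form contributes its full dimension, giving $\mu=2n$; equivalently, one replaces the path near $-\pi/2$ by the model $t\mapsto$ ``graph of $tI_{2n}$'', $t\in[0,\delta]$, which is homotopic to it rel endpoints without creating crossings and has index $2n$. The subtle point here is precisely the sign and rank of this endpoint crossing form: it is the monotone, sign-definite structure of the energy identity that guarantees the convention assigns the full $+2n$.
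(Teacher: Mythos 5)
Your proposal is correct, but it proves both halves by a genuinely different mechanism than the paper. For \eqref{thm:lim to Lag1} the paper argues softly and by contradiction: if $\mathcal{U}(Gr(\gamma_{v_m}(T)))\to\mathcal{U}(\Lambda_0)$ with $\Lambda_0\neq\Lambda_D$ along some $v_m\to-\infty$, it separates the singular cycle $\Sigma_{\mathcal{U}(\Lambda_0)}$ from $\Sigma_{\mathcal{U}(\Lambda_D)}$, produces boundary conditions $\tilde\Lambda_m$ lying in a fixed compact set disjoint from $\Sigma_{\mathcal{U}(\Lambda_D)}$ for which $v_m$ is an eigenvalue, and contradicts the uniform lower bound on $\lambda_1$ from Proposition \ref{prop:low bound}. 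Your quantitative energy estimate on the Neumann-to-Dirichlet map $G_\lambda$ is more elementary and self-contained, and it buys an explicit decay rate ($\|G_\lambda\|\lesssim|\lambda|^{-1/4}$), whereas the paper's argument buys economy by recycling machinery already built for Theorem \ref{thm: Maslov index continu}. For \eqref{thm:lim to Lag2} the paper again goes through spectral theory: $\lambda(s)$ is an eigenvalue of $\mathcal{A}_{Gr(\gamma_{\lambda(s)}(T))}$ of multiplicity $2n$, so at least $2n$ eigenvalue branches diverge to $-\infty$ as $s\to(-\pi/2)^+$, whence the index is at least $2n$ by Theorem \ref{thm: Maslov index continu}, while the absence of interior crossings caps it at $2n$. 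Your route --- no interior crossings plus a direct evaluation of the single endpoint crossing using positivity of $G_{\lambda(s)}$ --- is purely symplectic and does not invoke Theorem \ref{thm: Maslov index continu} at all; this is a genuine structural difference, since in the paper Theorem \ref{thm:lim to Lag} is logically downstream of Theorem \ref{thm: Maslov index continu}, while your proof would make it independent.

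One point in your part (2) needs tightening: the appeal to Definition \ref{defi2.1} being ``consistent with the nonnegativity in Corollary \ref{cor2.5}'' cannot by itself fix the answer at $2n$, because under this convention \emph{every} path leaving a crossing has nonnegative index --- a path departing into negative-definite graphs is equally consistent with Corollary \ref{cor2.5}, yet it contributes $0$, not $2n$. What actually pins down the value is an explicit check against \eqref{def-f}: in your chart (graphs over $\Lambda_D$, momenta to positions, frame with $X=I_{2n}$, $Y=G$) one has $\mathcal{U}=(I_{2n}+iG)(I_{2n}-iG)^{-1}$, whose eigenvalues $(1+ig_j)(1-ig_j)^{-1}$ with $g_j>0$ have arguments $2\arctan g_j\in(0,\pi)$; the continuous angle branches of Remark \ref{rm:def_mas_2} therefore start at $0$ at $s=-\pi/2$ and end in $(0,2\pi)$, so each of the $2n$ eigenvalues contributes an $E$-increment equal to $1$. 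The same computation is what certifies that your model path (graph of $tI_{2n}$, $t\in[0,\delta]$, in the same chart) has index $2n$, so your homotopy argument closes once this one-line verification is added. This is a fillable verification, not a gap in strategy.
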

From \eqref{thm:lim to Lag1}, we define  $Gr(\ga_{-\infty}(T))=Gr(\ga_{\lambda(-\pi/2)}(T))=\Lambda_D$ and thus  $Gr(\ga_{\lambda(s)}(T)), s\in[-\pi/2, -\pi/2+\epsilon]$ is a continuous path in $\text{Lag}(\mathcal{V}, \Omega)$.
  \eqref{thm:lim to Lag2} implies that    $Gr(\gamma_{\lambda(s)}(T)), s\in[-\pi/2, -\pi/2+\epsilon]$ can be considered as a positive path in the sense that all the  eigenvalues of the corresponding path of  unitary matrices are rotated  counterclockwise. Here we use the definition of Maslov index in
Remark \ref{rm:def_mas_2}.


The rest of this  paper is organized as follows. In Section $2$, we briefly review the Maslov index theory for Lagrangian subspaces. In Section 3,
we give the proof of Theorem \ref{thm: Maslov index continu}. Theorem \ref{thm: eigen range} is shown in Section 4. In Section 5, we provide the proof of Theorem \ref{thm:lim to Lag}.

\section{Maslov  Index  for the Lagrangian subspaces}

  In this section,  we  briefly introduce the general Maslov index theory for the Lagrangian subspaces. Then we apply it to our framework for Sturm-Liouville problems. For this theory, we refer the readers to \cite{Ar,BoZh14,CLM,Lon4} and references therein.

  The Lagrangian frame of a given $\Lambda\in \text{Lag}( \mathbf{C}^{2m},\omega_m )$   is defined by an injective linear map $\mathcal{Z}:\mathbf{C}^{m}\rightarrow \Lambda$ with the form $\mathcal{Z}=\begin{bmatrix}X\\Y \end{bmatrix}$, where $X$ and $Y$ are $m\times m$ complex matrices such that $X^*Y=Y^*X$ and $rank(\mathcal{Z})=m$. Here $X^*$ is  the conjugate transpose of  $X$. 
  The Lagrangian subspace is represented by a corresponding Lagrangian frame
   in the sequel.
		Clearly, $P_+=\begin{bmatrix}
		I_{m}\\iI_{m}
		\end{bmatrix}$ is a bijection from $\mathbf{C}^{m}$ to $\Lambda^+=\ker(iJ_{m}-I_{2m})$ and
		$P_-=\begin{bmatrix}
		I_{m}\\-iI_{m}
		\end{bmatrix}$ is a bijection from $\mathbf{C}^{m}$ to $\Lambda^-=\ker(iJ_{m}+I_{2m})$.
	For any $v\in \mathbf{C}^{m}$,	we  decompose $\cZ v$ to be
		$$
		\cZ v=\begin{bmatrix}
			(X-iY)v/2\\ (Y+iX)v/2
		\end{bmatrix}
		+
		\begin{bmatrix}
			(X+iY)v/2\\ (Y-iX)v/2
		\end{bmatrix}=P_+((X-iY)v/2)+P_-((X+iY)v/2).
		$$
Then we get a unitary operator
		 \begin{equation*}
		W =P_-(X+iY)(X-iY)^{-1}P_+^{-1}
		 \end{equation*}
from $\Lambda^+$ to $\Lambda^-$.
	Correspondingly, $P_-^{-1}W P_+=(X+iY)(X-iY)^{-1}$ is an $m\times m$ unitary  matrix.
  So we  define a  map $\mathcal{U}: \text{Lag}( \mathbf{C}^{2m},\omega_m )\rightarrow \mathbf{U}(m)$ as follows:
  \begin{equation} \label{def-f}\mathcal{U}(\Lambda)=(X+iY)(X-iY)^{-1}\in\mathbf{U}(m).  \end{equation}
 Note that  $\mathcal{U}$ is  a homeomorphic (isomorphic) map \cite{HWY} and  $\mathcal{U}(\Lambda)$ is independent of the choice of frame.
   Let $\Lambda_k\in \text{Lag}(\mathbf{C}^{2m},\omega_m)$ with   Lagrangian frame to be $\begin{bmatrix}X_k\\Y_k \end{bmatrix}$, and $W_k =P_-(X_k+iY_k)(X_k-iY_k)^{-1}P_+^{-1}$, $k=1,2$. Then	it follows that
		$\mathcal{U}(\Lambda_2)^{-1}\mathcal{U}(\Lambda_1)=P_+^{-1}W_{2}^{-1}W_1 P_+$ and the spectrum of $\mathcal{U}(\Lambda_2)^{-1}\mathcal{U}(\Lambda_1)$ is the same with that of $W_2^{-1}W_1$.
  The metric of  Lagrangian subspaces is defined as the metric of corresponding unitary matrices on $\mathbf{U}(m)$, that is,
  \begin{equation*}  \text{dist} (\Lambda_1, \Lambda_2):=\|\mathcal{U}(\Lambda_1)-\mathcal{U}(\Lambda_2)\|,   \end{equation*}
   where $\|\cdot\|$ is the operator norm. Moreover,
$
\dim \left(\Lambda_{1}\cap \Lambda_{2}\right)=\dim \left(\ker(\mathcal{U}(\Lambda_{2})^{-1}\mathcal{U}(\Lambda_{1})-I_{m})\right).
$
For any fixed $U_{0}\in \mathbf{U}(m)$, the singular cycle $\Sigma_{U_{0}}$ of $U_{0}$ is defined  by
$$
 \Sigma_{U_{0}}=\{U\in \mathbf{U}(m)\mid \det(U_{0}^{-1}U-I_{m})=0\}.
$$
 Now we introduce the  Maslov index. Consider  a continuous path $U_{t}, t\in [a,b]$, in $\mathbf{U}(m)$ and the  small perturbation
 $e^{is}U_{t}$, $s\in[-\varepsilon,\varepsilon]$. For any fixed $t_{0}\in [a,b]$, the path $e^{is}U_{t_{0}}$, $s\in[-\varepsilon,\varepsilon]$, is transversal to $\Sigma_{U_{0}}$. Furthermore, $e^{-is_{0}}U_{a}, e^{-is_{0}}U_{b}\notin \Sigma_{U_{0}}$ for
 any $s_{0}>0$ sufficiently small.
Thus the intersection number
  $[e^{-is_{0}}U_{t}:\Sigma_{U_{0}}]$ can be  well-defined. Then we give the concept of Maslov index:
\begin{defi}\label{defi2.1}
Let $\Lambda(t), t\in[a,b]$, be a continuous path in $\text{Lag}(\mathbf{C}^{2m},\omega_m)$ and $\Lambda_{0}\in \text{Lag}(\mathbf{C}^{2m},\omega_m)$. Then the Maslov index is defined by
\begin{equation*}
\mu(\Lambda_{0},\Lambda(t),t\in[a,b]):=[e^{-is_{0}}\mathcal{U}(\Lambda(t)): \Sigma_{\mathcal{U}(\Lambda_{0})}],
\end{equation*}
where $s_{0}>0$ is sufficiently small.
\end{defi}

\begin{rem}\label{rm:def_mas_2} Definition \ref{defi2.1} is equivalent to the definition of Maslov index in  Section 2.2 of \cite{BoZh14}, which is  defined as follows.
		There are $m$ continuous functions $\theta_j\in C([a,b],\mathbf{R})$ such that $e^{i\theta_j(t)},1\leq j\leq m,$ are all the eigenvalues of $\mathcal{U}(\Lambda_0)^{-1}\mathcal{U}(\Lambda(t))$ (counting algebraic multiplicities).
		Denote by $[a]$ the integer part of $a\in \mathbf{R}$.
		Define $E(a)=-[-a]$.
		Then the Maslov index can be defined as
		 \begin{equation*}
		\mu(\Lambda_0,\Lambda(t),t\in[a,b])=\sum_{j=1}^{m}\left(E\left(\frac{\theta_j(b)}{2\pi}\right)-E\left(\frac{\theta_j(a)}{2\pi}\right)\right).
		 \end{equation*}
\end{rem}
Then we provide some properties of Maslov index and we refer to \cite{BoZh14} for the details.

{Property I (Reparametrization invariance)  Let
	$\phi:[c,d]\rightarrow [a,b]$ be a continuous and piecewise smooth
	function with $\phi(c)=a$, $\phi(d)=b$. Then
 \begin{equation*} \mu(\Lambda_1(t),
	\Lambda_2(t),t\in[a,b])=\mu(\Lambda_1(\phi(\tau)), \Lambda_2(\phi(\tau)),\tau\in[c,d]).  \end{equation*}
	
	Property II (Homotopy invariant with endpoints)  For two continuous
	families of Lagrangian paths $\Lambda_1(s,t)$, $\Lambda_2(s,t)$, $0\leq s\leq 1$, $a\leq
	t\leq b$, such that both $\dim \left(\Lambda_1(s,a)\cap \Lambda_2(s,a)\right)$ and $\dim
	\left(\Lambda_1(s,b)\cap \Lambda_2(s,b)\right)$ are constants, we have
 \begin{equation*} \mu(\Lambda_1(0,t),
	\Lambda_2(0,t),t\in[a,b])=\mu(\Lambda_1(1,t),\Lambda_2(1,t),t\in[a,b]).  \end{equation*}

	Property III (Path additivity)  If $a<c<b$, then
	 \begin{equation*} \mu(\Lambda_1(t),\Lambda_2(t),t\in[a,b])=\mu(\Lambda_1(t),\Lambda_2(t),t\in {[a,c]})+\mu(\Lambda_1(t),\Lambda_2(t),t\in{[c,b]}).
	 \end{equation*}

	Property IV (Symplectic invariance)  Let $\ga(t)$, $t\in[a,b],$ be a
	continuous path in $\Sp(2m)$. Then
 \begin{equation*} \mu(\Lambda_1(t),\Lambda_2(t),t\in[a,b])=\mu(\ga(t)\Lambda_1(t), \ga(t)\Lambda_2(t),t\in[a,b]). \end{equation*}

	Property V (Symplectic additivity)  Let $W_i$, $i=1,2$, be symplectic spaces,
	$\Lambda_1(\cdot),\Lambda_2(\cdot)\in C([a,b],$ $ \text{Lag}(W_1))$ and ${\Lambda}_3(\cdot),{\Lambda}_4(\cdot)\in C([a,b], \text{Lag}(W_2))$.
	Then we have  \begin{equation*}  \mu(\Lambda_1(t)\oplus{\Lambda}_3(t),\Lambda_2(t)\oplus {\Lambda}_4(t),t\in[a,b])= \mu(\Lambda_1(t),\Lambda_2(t),t\in[a,b])+
	\mu({\Lambda}_3(t),{\Lambda}_4(t),t\in[a,b]).
	 \end{equation*}}

Now we turn back to the framework for boundary conditions of Sturm-Liouville problems. Firstly, we  change the basis of    $(\mathcal{V}, \Omega)$ such that the symplectic structure becomes the standard form $\omega_{2n}$.
Recall that  $\Omega=-\omega_n\oplus\omega_n$ corresponds to the matrix
$$\mathcal{J}=\begin{bmatrix} -J_{n}&0\\0&J_{n} \end{bmatrix}.$$
Direct computation implies
 $$S \mathcal{J} S=J_{2n},$$
 where
 $$
S=\begin{bmatrix} -I_{n}&0&0&0\\0&0& I_{n}&0\\0&I_{n}&0&0\\0&0&0&I_{n} \end{bmatrix}.
$$
Under the new basis $\omega_{2n}$, the boundary condition \eqref{eq:original general boundary condition} becomes
 \begin{equation*}S\begin{bmatrix}
		z(0)\\z(T)
		\end{bmatrix}=\begin{bmatrix}
		-y(0)\\y(T)\\x(0)\\x(T)
		\end{bmatrix}\in \Lambda_0.
 \end{equation*}

 Next, we provide a Lagrangian frame of any given $\Lambda_0 \in \text{Lag}(\mathcal{V},\omega_{2n})$.  Let $V(\Lambda_0)$ be the subspace of $\Lambda_N$ defined by $$V(\Lambda_0)=(\Lambda_0+\Lambda_D)\cap\Lambda_N.$$
 Then $(x(0)^T,x(T)^T)^T\in V(\Lambda_0)$.
  Thanks to the splitting $\mathbf{C}^{2n}\cong \Lambda_N=V(\Lambda_0)\oplus V(\Lambda_0)^\perp$, we get $(-y(0)^T, y(T)^T)^T$ $=(-y_1(0)^T, y_1(T)^T)^T+(-y_2(0)^T, y_2(T)^T)^T$, where $(-y_1(0)^T, y_1(T)^T)^T\in J_{2n} V(\Lambda_0)^\perp$ and $(-y_2(0)^T, y_2(T)^T)^T\in J_{2n} V(\Lambda_0)$. Then we have a linear map $A$ from $V(\Lambda_0)$ to $J_{2n}V(\Lambda_0)$ such that $(-y_2(0)^T, y_2(T)^T)^T=A(x(0)^T,$ $x(T)^T)^T$.  $A$ is Hermitian since $\Lambda_0$ is Lagrangian.  By assuming $\dim V(\Lambda_0)=k_0  $,   we can choose a suitable basis of $\mathcal{V}$ such that
  \begin{equation}\label{eq:precise express of boundary condition}
  \begin{bmatrix}I_{2n-k_0}&0\\0&A\\0&0\\0&I_{k_0}\end{bmatrix}
  \end{equation}
 is a Lagrangian frame of $\Lambda_0 $  under the symplectic form $J_{2n}$. The left column corresponds to $\Lambda_0\cap\Lambda_D$ and the right column corresponds to $\left\{
   \begin{bmatrix}Au \\u \end{bmatrix} \ : \ u\in V(\Lambda_0)\right\}$.
In addition, $\begin{bmatrix} -I_n&0\\D_1&D_2\\0&I_n\\D_3&D_4 \end{bmatrix}$ is a frame of $Gr(\mathcal{D})$ for $\mathcal{D}=\begin{bmatrix}D_1 &D_2\\D_3 & D_4 \end{bmatrix}\in\Sp(2n)$.
By \eqref{def-f} we have
 $$\mathcal{U}(\Lambda_0)=I_{2n-k_0}\oplus U_A,  $$  where $U_A=(A+iI_{k_0})(A-iI_{k_0})^{-1}$.  Especially,
$$ \mathcal{U}(\Lambda_D)=I_{2n},\quad   \mathcal{U}(\Lambda_N)=-I_{2n}.   $$

Let  $\Lambda_s$, $s\in[\tau_0, \tau_1],$ be a continuous path with $\tau_1-\tau_0$ small enough. From the definition of Maslov index, we will show that  $\mu(\Lambda_D, \Lambda_s, s\in[\tau_0, \tau_1])$ is to count the number of eigenvalues of $\mathcal{U}(\Lambda_s)$  passing $1$.  More precisely,  we  choose $\theta_0\in (0,2\pi)$ such that $e^{i\theta_0}\notin\sigma(\mathcal{U}(\Lambda_s))$ for any $s\in[\tau_0, \tau_1] $.  Denote $\nu^+(\Lambda_s)$ to be the number of total eigenvalues of $\mathcal{U}(\Lambda_s)$ in the region $\{e^{i\theta}, \theta\in (0, \theta_0) \}$.   Then we have the following lemma.
\begin{lem}\label{Mas Dir}
Under the above notation, we have
$$ \mu(\Lambda_D, \Lambda_s, s\in [\tau_0, \tau_1] )=\nu^+(\Lambda_{\tau_1})-\nu^+(\Lambda_{\tau_0}).  $$

\end{lem}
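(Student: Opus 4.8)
The plan is to read off both sides of the asserted identity from the eigenvalue description in Remark \ref{rm:def_mas_2}, exploiting the explicit value $\mathcal{U}(\Lambda_D)=I_{2n}$ recorded above. Since $\mathcal{U}(\Lambda_D)^{-1}\mathcal{U}(\Lambda_s)=\mathcal{U}(\Lambda_s)$, there exist $2n$ continuous phase functions $\theta_j\in C([\tau_0,\tau_1],\mathbf{R})$, $1\le j\le 2n$, so that $e^{i\theta_j(s)}$ are exactly the eigenvalues of $\mathcal{U}(\Lambda_s)$ counted with multiplicity, and Remark \ref{rm:def_mas_2} gives
$$\mu(\Lambda_D,\Lambda_s,s\in[\tau_0,\tau_1])=\sum_{j=1}^{2n}\left(E\left(\frac{\theta_j(\tau_1)}{2\pi}\right)-E\left(\frac{\theta_j(\tau_0)}{2\pi}\right)\right),$$
where $E(a)=-[-a]$ is the ceiling function. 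On the other side, writing $\chi_j(s)=1$ when $\theta_j(s)\bmod 2\pi\in(0,\theta_0)$ and $\chi_j(s)=0$ otherwise, the definition of $\nu^+$ reads $\nu^+(\Lambda_s)=\sum_{j=1}^{2n}\chi_j(s)$. It therefore suffices to prove the term-by-term identity $E(\theta_j(\tau_1)/2\pi)-E(\theta_j(\tau_0)/2\pi)=\chi_j(\tau_1)-\chi_j(\tau_0)$ for each $j$ and then sum.

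The decisive observation, which is where the smallness of $\tau_1-\tau_0$ enters (it is what allows the choice of a $\theta_0$ with $e^{i\theta_0}\notin\sigma(\mathcal{U}(\Lambda_s))$ for every $s$), is that no branch $\theta_j$ can meet the discrete barrier set $\theta_0+2\pi\mathbf{Z}$. Since the complement $\mathbf{R}\setminus(\theta_0+2\pi\mathbf{Z})$ is the disjoint union of the open intervals $I_k=(\theta_0+2\pi(k-1),\theta_0+2\pi k)$, $k\in\mathbf{Z}$, and each $\theta_j$ is a continuous map on the connected interval $[\tau_0,\tau_1]$, its whole image must lie in a single $I_{k(j)}$. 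I would spell out this connectedness confinement carefully, since it is the only genuinely delicate point: it is precisely what prevents a phase from winding past the reference point $1$ more than can be detected by its current position.

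I would then verify the elementary fact that $E(\theta/2\pi)-\chi(\theta)$ is constant on each $I_k$. Each $I_k$ contains the single multiple $2\pi k$ of $2\pi$, and a direct inspection of endpoints shows $E(\theta/2\pi)=k$ and $\chi(\theta)=0$ on $(\theta_0+2\pi(k-1),2\pi k]$, while $E(\theta/2\pi)=k+1$ and $\chi(\theta)=1$ on $(2\pi k,\theta_0+2\pi k)$; in either piece $E(\theta/2\pi)-\chi(\theta)=k$. Applying this to $\theta=\theta_j(\tau_1)$ and $\theta=\theta_j(\tau_0)$, both lying in the same $I_{k(j)}$, yields $E(\theta_j(\tau_1)/2\pi)-E(\theta_j(\tau_0)/2\pi)=\chi_j(\tau_1)-\chi_j(\tau_0)$ for every $j$. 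Summing over $j$ gives exactly $\mu(\Lambda_D,\Lambda_s,s\in[\tau_0,\tau_1])=\nu^+(\Lambda_{\tau_1})-\nu^+(\Lambda_{\tau_0})$.

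The main obstacle is conceptual rather than computational: one must reconcile the cumulative, winding-type nature of the Maslov index (the ceiling function $E$ records how many times a phase has passed the reference point $1$, net of counterclockwise passages) with the instantaneous, position-type nature of $\nu^+$ (which only registers whether an eigenvalue currently sits in the arc $\{e^{i\theta}:\theta\in(0,\theta_0)\}$). These two quantities coincide exactly because the barrier $e^{i\theta_0}$ is never an eigenvalue, confining each branch to one period. I expect the constancy of $E(\theta/2\pi)-\chi(\theta)$ and the confinement step to be the places most prone to sign and boundary errors, so I would present them with the explicit interval endpoints above; the remaining bookkeeping is routine.
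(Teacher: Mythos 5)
Your proof is correct and takes essentially the same approach as the paper: both arguments read the identity off the eigenvalue-phase definition of Remark \ref{rm:def_mas_2}, using the barrier $e^{i\theta_0}\notin\sigma(\mathcal{U}(\Lambda_s))$ to trap each phase branch in a single period and then matching the ceiling-function count against $\nu^+$. The only difference is presentational: the paper normalizes the lifts to lie in $(\theta_0,\theta_0+2\pi)$ from the start and computes $\sum_j E\left(\theta_j(\tau_l)/2\pi\right)=2n+\nu^+(\Lambda_{\tau_l})$, whereas you keep arbitrary lifts and prove the per-branch constancy of $E(\theta/2\pi)-\chi(\theta)$ on each interval $I_k$ — in effect spelling out the confinement step the paper leaves implicit.
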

{\begin{proof}
		We use the definition of Maslov index in Remark \ref{rm:def_mas_2}.
		Since $e^{i\theta_0}\notin \sigma(\mathcal{U}(\Lambda_s))$, we see that there exist $2n$ continuous functions $\theta_j\in C([\tau_0,\tau_1],(\theta_0,\theta_0+2\pi))$ such that  $e^{i\theta_j(s)},1\leq j\leq 2n,$ are the spectrum of $\mathcal{U}(\Lambda_s)$ and
		$$
		\mu(\Lambda_D,\Lambda_s,s\in[\tau_0,\tau_1])=\sum_{j=1}^{2n} \left(E\left(\frac{\theta_j(\tau_1)}{2\pi}\right)-E\left(\frac{\theta_j(\tau_0)}{2\pi}\right)\right).
		$$
		Since $\theta_i(t)\in (\theta_0,\theta_0+2\pi)$ for all $t\in [\tau_0,\tau_1]$, we obtain
		$$
		\sum_{j=1}^{2n} E\left({\theta_j(\tau_l)\over2\pi}\right)=2n+\nu^+(\Lambda_{\tau_l}),l=0,1.
		$$
		The lemma then follows.
\end{proof}}

\begin{cor}\label{cor2.5}  Under the assumptions  \eqref{1.4}--\eqref{1.5}, we have
 $$0\leq k_\pm\leq c_0-c_\pm. $$

\end{cor}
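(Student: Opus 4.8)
The plan is to transport the statement to the unitary picture via the map $\mathcal{U}$ and to read the Maslov index off the eigenvalues of $\mathcal{U}(\Lambda_s)$ on the unit circle. The starting observation is that $\mathcal{U}(\Lambda_D)=I_{2n}$, so by the identity $\dim(\Lambda_s\cap\Lambda_D)=\dim\ker\bigl(\mathcal{U}(\Lambda_s)-I_{2n}\bigr)$ the number $\dim(\Lambda_s\cap\Lambda_D)$ is exactly the multiplicity of the eigenvalue $1$ of $\mathcal{U}(\Lambda_s)$. Hence \eqref{1.4} says that $\mathcal{U}(\Lambda_0)$ has the eigenvalue $1$ with multiplicity $c_0$, whereas $\mathcal{U}(\Lambda_s)$ has it with multiplicity $c_+$ for $s\in(0,\epsilon]$ and $c_-$ for $s\in[-\epsilon,0)$. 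Since the singularity at $s=0$ is isolated, after shrinking $\epsilon$ I may fix a gap direction $\theta_0\in(0,2\pi)$ with $e^{i\theta_0}\notin\sigma(\mathcal{U}(\Lambda_s))$ for all $s\in[-\epsilon,\epsilon]$, so that $\nu^+(\Lambda_s)$ and Lemma \ref{Mas Dir} apply on the whole interval.

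First I treat the right branch. By Lemma \ref{Mas Dir},
\[
k_+=\mu(\Lambda_D,\Lambda_s,s\in[0,\epsilon])=\nu^+(\Lambda_\epsilon)-\nu^+(\Lambda_0).
\]
Comparing the two endpoint configurations, exactly $c_0-c_+$ of the $c_0$ eigenvalues sitting at $1$ when $s=0$ have moved off $1$ at $s=\epsilon$, while the remaining $c_+$ stay at $1$; for $\epsilon$ small no other eigenvalue can reach either endpoint $1$ or $e^{i\theta_0}$ of the arc $\{e^{i\theta}:\theta\in(0,\theta_0)\}$. Therefore $\nu^+(\Lambda_\epsilon)-\nu^+(\Lambda_0)$ counts precisely those detaching eigenvalues that leave $1$ counterclockwise, i.e. enter $(0,\theta_0)$. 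Each is counted at most once and the count is non-negative, so $0\le k_+\le c_0-c_+$.

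The left branch is symmetric. Lemma \ref{Mas Dir} gives $-k_-=\nu^+(\Lambda_0)-\nu^+(\Lambda_{-\epsilon})$, that is $k_-=\nu^+(\Lambda_{-\epsilon})-\nu^+(\Lambda_0)$. Running $s$ up to $0$, exactly $c_0-c_-$ eigenvalues arrive at $1$; those that approach $1$ from inside the arc $(0,\theta_0)$ are counted by $\nu^+(\Lambda_{-\epsilon})$ but not by $\nu^+(\Lambda_0)$, contributing $+1$ each, while the rest contribute $0$. Hence again $0\le k_-\le c_0-c_-$. Equivalently, one may run the continuous eigenvalue functions $\theta_j(s)$ of Remark \ref{rm:def_mas_2} with $\theta_j(0)=0$ for the $c_0$ eigenvalues at $1$, and note that each such $\theta_j$ contributes $E(\theta_j(\epsilon)/2\pi)\in\{0,1\}$ on the right and $-E(\theta_j(-\epsilon)/2\pi)\in\{-1,0\}$ on the left, with exactly $c_\pm$ of them contributing $0$ because they remain at $1$.

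The one point needing care is the second sentence of each branch: I must rule out any eigenvalue other than the $c_0$ ones at $1$ crossing the boundary $\{1,e^{i\theta_0}\}$ of the counting arc. This follows from continuity of the spectrum of $\mathcal{U}(\Lambda_s)$ together with the isolatedness of the singularity, which lets me shrink $\epsilon$ so that every eigenvalue not equal to $1$ at $s=0$ stays uniformly away from both $1$ and $e^{i\theta_0}$. This same smallness is what pins the contributions to lie in $\{0,1\}$ on the right and in $\{-1,0\}$ on the left, and thereby produces both the non-negativity of $k_\pm$ and the upper bound $c_0-c_\pm$ simultaneously.
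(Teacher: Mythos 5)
Your proof is correct and follows essentially the same route as the paper: pass to the unitary picture via $\mathcal{U}$, express $k_\pm$ through Lemma \ref{Mas Dir} as a difference of the counts $\nu^+$, and bound that difference by tracking how the $c_0$ eigenvalues sitting at $1$ move, which is exactly the paper's argument. The only cosmetic difference is that the paper chooses $\theta_0$ so that $\mathcal{U}(\Lambda_0)$ has no spectrum in the punctured arc (forcing $\nu^+(\Lambda_0)=0$ and making the number of eigenvalues in $\{e^{i\theta}:\theta\in(-\theta_0,\theta_0)\}$ identically $c_0$), whereas you allow extra spectrum in the arc and cancel it; both write-ups rely, to the same implicit degree, on the fact that the index vanishes over the part of the interval where $\dim(\Lambda_s\cap\Lambda_D)$ is constant (your ``shrinking of $\epsilon$'', the paper's unproved $\mu(\Lambda_D,\Lambda_s,s\in[\epsilon_0,\epsilon])=0$).
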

\begin{proof}
		It suffices  to prove $0\le k_+\le c_0-c_+$.
		Note that $c_+ =\dim \left(\Lambda_s\cap \Lambda_D\right)=\dim\left(\ker(\mathcal{U}(\Lambda_s)-I_{2n})\right), s\in(0,+\epsilon]$.
Choose $\theta_0>0$ such that  $e^{it}\notin \sigma(\mathcal{U}(\Lambda_0))$ for all $t\in [-\theta_0,0)\cup(0,\theta_0]$.
Then there exists $\epsilon_0 \in(0,\epsilon)$ such that   $e^{\pm i\theta_0}\notin \sigma(\mathcal{U}(\Lambda_s))$ for all $s\in [0,\epsilon_0]$.
Therefore, the number of eigenvalues of $\mathcal{U}(\Lambda_s)$ in the region $\{e^{i\theta},\theta\in (-\theta_0,\theta_0)\} $is a constant for all $s\in [0,\epsilon_0]$ and it is exactly $c_0$.
Hence we get $\nu^+(\Lambda_{\epsilon_0})\le c_0-c_+$.
By Lemma \ref{Mas Dir} and Property III of Maslov index, we have
$k_+=\mu(\Lambda_D, \Lambda_s, s\in [0, \epsilon_0] )+\mu(\Lambda_D, \Lambda_s, s\in [\epsilon_0, \epsilon] )=\nu^+(\Lambda_{\epsilon_0})-\nu^+(\Lambda_0)=\nu^+(\Lambda_{\epsilon_0})$.
Since $0\le \nu^+(\Lambda_{\epsilon_0})\le c_0-c_+$, the conclusion then follows.
 \end{proof}

We have the following result for small perturbation of $\Lambda_{s_0}$.
\begin{lem}\label{small per}
Let $\alpha\in \text{Lag}(\mathcal{V},\omega_{2n})$ and
 \begin{equation}\label{t G-path-A-s}
  \Lambda_s=\begin{bmatrix}I_{r}&0\\0&A_s\\0&0\\0&I_{2n-r}\end{bmatrix}
  \end{equation}
  with $A_s=A_{0}+\tan (s) I_{2n-r}$. Then for any $s_0\in (-{\pi\over2},{\pi\over2})$,  $\dim(\alpha\cap \Lambda_{s})\leq r$
  for $|s-s_0|\neq0$ small enough.

\end{lem}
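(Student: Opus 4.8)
The plan is to describe the intersection $\alpha\cap\Lambda_s$ explicitly through the frame in \eqref{t G-path-A-s} and then exploit the isotropy of $\alpha$ to compare intersections at different values of the parameter. First I would note that every vector of $\Lambda_s$ has the form $w_s(u,v):=(u,\,A_sv,\,0,\,v)^T$ with $u\in\mathbf{C}^{r}$ and $v\in\mathbf{C}^{2n-r}$, and that the first $r$ columns of the frame span the subspace $L:=\{w_s(u,0):u\in\mathbf{C}^r\}\subset\Lambda_s$, which does not depend on $s$. Setting $V_s:=\{v\in\mathbf{C}^{2n-r}: w_s(u,v)\in\alpha\ \text{for some}\ u\}$, the projection $w_s(u,v)\mapsto v$ sends $\alpha\cap\Lambda_s$ onto $V_s$ with kernel $\alpha\cap L$, so that $\dim(\alpha\cap\Lambda_s)=\dim(\alpha\cap L)+\dim V_s$. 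Since $\alpha\cap L\subseteq L$ is independent of $s$ and $\dim L=r$, everything reduces to controlling $\dim V_s$.

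The key step is a symplectic comparison at two parameters $s\neq s'$. Take $w_s(u,v)\in\alpha\cap\Lambda_s$ and $w_{s'}(u',v')\in\alpha\cap\Lambda_{s'}$; both lie in the isotropic subspace $\alpha$, hence $\omega_{2n}(w_s(u,v),w_{s'}(u',v'))=0$. A direct computation with $\omega_{2n}(\xi,\eta)=\langle J_{2n}\xi,\eta\rangle$ gives
\[
\omega_{2n}\big(w_s(u,v),w_{s'}(u',v')\big)=\langle A_sv,v'\rangle-\langle v,A_{s'}v'\rangle=\langle (A_s-A_{s'})v,v'\rangle,
\]
where I used that $A_{s'}$ is Hermitian. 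Because $A_s-A_{s'}=(\tan s-\tan s')I_{2n-r}$, this pairing equals $(\tan s-\tan s')\langle v,v'\rangle$. As $\tan$ is injective on $(-\pi/2,\pi/2)$, the vanishing of the pairing forces $\langle v,v'\rangle=0$, so $V_s\perp V_{s'}$ whenever $s\neq s'$.

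The conclusion is then immediate from finite dimensionality: the subspaces $\{V_s\}$ are pairwise orthogonal inside $\mathbf{C}^{2n-r}$, so at most $2n-r$ of them are nonzero. For every other value of $s$ one has $\dim V_s=0$, and hence $\dim(\alpha\cap\Lambda_s)=\dim(\alpha\cap L)\le r$. Fixing $s_0$ and shrinking the neighborhood to exclude this finite exceptional set (apart from $s_0$ itself) yields $\dim(\alpha\cap\Lambda_s)\le r$ for $0<|s-s_0|$ small, as claimed. I expect the only genuine obstacle to be the symplectic bookkeeping in the middle step: one must set up the frame coordinates carefully, verify that the pairing collapses to $(\tan s-\tan s')\langle v,v'\rangle$, and confirm the splitting $\dim(\alpha\cap\Lambda_s)=\dim(\alpha\cap L)+\dim V_s$. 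Once the pairing identity is in hand, the orthogonality and the counting are routine, and notably no appeal to analyticity of the path is needed.
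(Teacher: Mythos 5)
Your proposal is correct, and it takes a genuinely different route from the paper. You argue by direct symplectic linear algebra: you split $\Lambda_s$ into the $s$-independent part $L$ (spanned by the first $r$ frame columns) and the moving part, observe that $\dim(\alpha\cap\Lambda_s)=\dim(\alpha\cap L)+\dim V_s$, and then use the isotropy of $\alpha$ together with the identity $A_s-A_{s'}=(\tan s-\tan s')I_{2n-r}$ to show that the pairing of $w_s(u,v)$ with $w_{s'}(u',v')$ collapses to $(\tan s-\tan s')\langle v,v'\rangle$; hence the subspaces $V_s$ are pairwise Hermitian-orthogonal in $\mathbf{C}^{2n-r}$, so at most $2n-r$ of them are nonzero, and off this finite set $\dim(\alpha\cap\Lambda_s)=\dim(\alpha\cap L)\leq r$. (Your frame computation checks out against the paper's conventions: with coordinates $(p,q)$, $p=(u,A_sv)$, $q=(0,v)$, one gets $\omega_{2n}=\langle A_sv,v'\rangle-\langle v,A_{s'}v'\rangle=\langle (A_s-A_{s'})v,v'\rangle$ by Hermitian symmetry of $A_{s'}$.) The paper instead works entirely in the unitary picture: it writes $\mathcal{U}(\Lambda_s)=I_r\oplus(A_s+iI_{2n-r})(A_s-iI_{2n-r})^{-1}$, computes the derivative $B$ of the path at $s_0$, notes that $iB$ is positive semi-definite with $\mathrm{rank}(P_2BP_2)\geq \dim\mathrm{Ran}\,P_2-r$, and runs a Schur-complement/perturbation argument on the block decomposition of $\mathcal{U}(\Lambda_s)\mathcal{U}(\alpha)^{-1}-I_{2n}$ to bound the kernel dimension for $|s-s_0|\neq 0$ small. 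The trade-off: your argument is more elementary (no derivatives, no perturbation of block matrices) and yields a stronger, global conclusion --- the exceptional parameter set is finite in all of $(-\pi/2,\pi/2)$, so the intersection dimension is constant off finitely many points; the paper's argument is local and analytically heavier, but it only uses semi-definiteness of the crossing form with kernel of dimension at most $r$, so it adapts to more general monotone paths of Hermitian matrices, whereas your orthogonality trick leans on $A_s-A_{s'}$ being a real scalar multiple of the identity.
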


\begin{proof}
Without loss of generality, we assume that $s_0=0$.
 Since $\mathcal{U}(\Lambda_s)=I_{r}\oplus(A_s+iI_{2n-r})(A_s-iI_{2n-r})^{-1} $, we have
 $$
 \frac{d}{ds}\mathcal{U}(\Lambda_s)|_{s=0}\mathcal{U}(\Lambda_0)^{-1}=0_r\oplus(-2i(A_{0}^2+I_{2n-r})^{-1}) .
 $$
Define $B=\frac{d}{ds}\mathcal{U}(\Lambda_s)|_{s=0}\mathcal{U}(\Lambda_0)^{-1}$ and $C=\frac{d}{ds}(\mathcal{U}(\Lambda_s)\mathcal{U}(\alpha)^{-1}-I_{2n})|_{s=0}$.
Let $x\in \ker(\mathcal{U}(\Lambda_0)\mathcal{U}(\alpha)^{-1}-I_{2n})$,  then
$$
Cx=\frac{d}{ds}(\mathcal{U}(\Lambda_s)\mathcal{U}(\alpha)^{-1}-I_{2n})|_{s=0}(x)=\frac{d}{ds}(\mathcal{U}(\Lambda_s)\mathcal{U}(\alpha)^{-1}-I_{2n})|_{s=0}(\mathcal{U}(\Lambda_0)\mathcal{U}(\alpha)^{-1})^{-1}(x)=Bx.
$$
It follows that $ C|_{\ker(\mathcal{U}(\Lambda_0)\mathcal{U}(\alpha)^{-1}-I_{2n})}=B|_{\ker(\mathcal{U}(\Lambda_0)\mathcal{U}(\alpha)^{-1}-I_{2n})}$.

Note that $\mathbf{C}^{2n}=\ker(\mathcal{U}(\Lambda_0)\mathcal{U}(\alpha)^{-1}-I_{2n})\,\oplus\, \text{Ran}(\mathcal{U}(\Lambda_0)\mathcal{U}(\alpha)^{-1}-I_{2n})$ and it is an orthogonal decomposition.
Let $P_1$ and $ P_2$ be the orthogonal projections from $\mathbf{C}^{2n}$ to  $\text{Ran}(\mathcal{U}(\Lambda_0)\mathcal{U}(\alpha)^{-1}-I_{2n})$ and $\ker(\mathcal{U}(\Lambda_0)\mathcal{U}(\alpha)^{-1}-I_{2n})$, respectively.
Then we have $\mathcal{U}(\Lambda_s)\mathcal{U}(\alpha)^{-1}-I_{2n}=\begin{bmatrix}
A_{11}(s)& A_{12}(s)\\
A_{21}(s)& A_{22}(s)
\end{bmatrix}$ with $A_{ij}(s)=P_i(\mathcal{U}(\Lambda_s)\mathcal{U}(\alpha)^{-1}-I_{2n})|_{\text{Ran} P_j}$, $A_{12}(0)=A_{21}(0)=A_{22}(0)=0$,
and $\frac{d}{ds}A_{22}(s)|_{s=0}=P_2B|_{\text{Ran} P_2}$.

Since $iB=0_r\,\oplus \,(2(A_{0}^2+I_{2n-r})^{-1})$ is a positive semi-definite matrix,
it follows that $\text{rank}\, (P_2BP_2)\ge \dim\text{Ran} P_2-r$.
  For $|s|\neq0$ small enough, we have
  $$\dim\left(\ker(\mathcal{U}(\Lambda_s)\mathcal{U}(\alpha)^{-1}-I_{2n})\right)=\dim(\ker(A_{22}(s)-A_{21}(s)A_{11}(s)^{-1}A_{12}(s))).$$
  Since $\lim_{s\to 0}(A_{22}(s)-A_{21}(s)A_{11}(s)^{-1}A_{12}(s))/s=P_2B|_{\text{Ran} P_2}$, we have  $$\dim\left(\ker(\mathcal{U}(\Lambda_s)\mathcal{U}(\alpha)^{-1}-I_{2n})\right)\le \dim\left(\ker(P_2B|_{\text{Ran} P_2})\right)=\dim\text{Ran} P_2-\dim\text{Ran}(P_2BP_2)\le r .$$
  The lemma then follows.
\end{proof}

\section{Proof of Theorem \ref{thm: Maslov index continu}}

In this section, we provide the proof  of Theorem  \ref{thm: Maslov index continu}. To this end, we use the index form associated with $\mathcal{A}_{\Lambda_0}$ to  study the properties of eigenvalues of Sturm-Liouville problems.   Let $A$ be the Hermitian matrix determined by (\ref{eq:precise express of boundary condition}).
The  index form $I_{\Lambda_0}$ is given by
\begin{equation}\label{eq:index form}
I_{\Lambda_0}(\xi,\eta)=\int_{ 0 }^{ T }\{\langle P\dot{\xi},\dot{\eta}\rangle+\langle Q\xi,\dot{\eta}\rangle+\langle Q^{T}\dot{\xi},\eta\rangle+\langle R\xi,\eta\rangle\} dt-\langle A\begin{bmatrix} \xi(0)\\ \xi(T) \end{bmatrix},\begin{bmatrix} \eta(0)\\ \eta(T) \end{bmatrix}\rangle
 \end{equation}
 on $$H_{\Lambda_0}=\{\xi\in W^{1,2}([0,T],\mathbf{C}^n)\ | \ (\xi(0)^T,\xi(T)^T)^T\in V(\Lambda_0)\}.$$
 Obviously, $H_{\Lambda_D}= W^{1,2}_0([0,T],\mathbf{C}^n)$. For any $\xi\in E_{\Lambda_0}(0,T)$ and $\eta\in H_{\Lambda_0}$, we get by the definition of  $\mathcal{A}_{\Lambda_0}$  and integration by parts that
 \begin{equation*}
\begin{aligned}
 \langle \mathcal{A}_{\Lambda_0}\xi,\eta\rangle_{L^2}&=\int_{ 0 }^{ T }\langle-( P\dot{\xi}+Q\xi)^{\cdot}+Q^{T}\dot{\xi}+R\xi,\eta\rangle dt\\
 &=\int_{ 0 }^{ T }\{\langle P\dot{\xi},\dot{\eta}\rangle+\langle Q\xi,\dot{\eta}\rangle+\langle Q^{T}\dot{\xi},\eta\rangle +\langle R\xi,\eta\rangle\} dt -\langle\begin{bmatrix} -y(0)\\y(T) \end{bmatrix},\begin{bmatrix} \eta(0)\\ \eta(T) \end{bmatrix}\rangle\\
 &=\int_{ 0 }^{ T }\{\langle P\dot{\xi},\dot{\eta}\rangle+\langle Q\xi,\dot{\eta}\rangle+\langle Q^{T}\dot{\xi},\eta\rangle+\langle R\xi,\eta\rangle\} dt-\langle A\begin{bmatrix} \xi(0)\\ \xi(T) \end{bmatrix},\begin{bmatrix} \eta(0)\\ \eta(T) \end{bmatrix}\rangle\\
&=I_{\Lambda_0}(\xi,\eta),
\end{aligned}
\end{equation*}
where $y(t)=P(t)\dot{\xi}(t)+Q(t)\xi(t)$. Here  the third equality holds due to  the boundary condition $\Lambda_0$ in the frame \eqref{eq:precise express of boundary condition}.

  Let $\lambda_j(\Lambda_0)$ be the $j$-th eigenvalue of $\mathcal{A}_{\Lambda_0}$.
 From the minmax property of eigenvalues (see \cite{CH,RS}), we have
 \bea \lambda_j(\Lambda_0)=\sup_{E_{j-1}}\ \inf_{\xi\in E^\perp_{j-1},\,  \xi\neq0} \ \frac{I_{\Lambda_0}(\xi,\xi)}{\|\xi\|^2_{L^2}}, \label{eigen vara} \eea
 where $E_{j-1}$ is any $j-1$ dimensional closed subspace of $H_{\Lambda_0}$.

Next, we will decompose $H_{\Lambda_0}$ into two subspaces. Let $\dim V(\Lambda_0)=k_0$ and  ${\tilde\lambda}_{k_0}(\Lambda_0)\leq\cdots\leq\tilde{ \lambda}_1(\Lambda_0)$ be all the eigenvalues of $A$ with $e_i\in V(\Lambda_0)$, $1\leq i\leq k_0$, to be the correspondingly normalized eigenvectors. For any $1\leq i\leq k_0$, we can construct a linear function $\xi_i$ with  $\xi_i(t)=\xi_i(0)+\frac{t}{T}(\xi_i(T)-\xi_i(0))$, $t\in[0,T]$,  such that $(\xi_i(0)^T,\xi_i(T)^T)^T=e_i$.   For any $1\leq l\leq k_0$, we set
$$\mathcal{X}_l(\Lambda_0)=\text{span}\{\xi_i: 1\leq i\leq l\},$$ which is a $l$ dimensional subspace of  $H_{\Lambda_0}$. We define a new  norm of $\xi\in\mathcal{X}_{k_0}(\Lambda_0)$ by
 $$ \|\xi\|_2:=\sqrt{\|\xi(0)\|_{\mathbf{C}^n}^2+\|\xi(T)\|_{\mathbf{C}^n}^2}.$$
 It is well-defined since $\mathcal{X}_{k_0}(\Lambda_0)$ consists of linear functions.
 Since $\mathcal{X}_l(\Lambda_0)$ is finite dimensional,  we shall show that this  norm  is equivalent to the $L^2$ norm and the $W^{1,2}$ norm.   More precisely, we have the following lemma.

 \begin{lem}\label{equinorm}  There exist $c^\pm_1, c^\pm_2>0$, which depend only on $T$, such that
\bea \label{norm-inequalities}
 c_1^- \|\xi\|_2\leq  \|\xi\|_{L^2}\leq  c_1^+ \|\xi\|_2, \quad   c_2^- \|\xi\|_2\leq  \|\xi\|_{W^{1,2}}\leq  c_2^+ \|\xi\|_2,\quad \forall \;\xi\in\mathcal{X}_{k_0}(\Lambda_0).
  \eea
\end{lem}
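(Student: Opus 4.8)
The plan is to exploit the fact that every element of $\mathcal{X}_{k_0}(\Lambda_0)$ is an affine function, so that all the integrals defining $\|\xi\|_{L^2}$ and $\|\xi\|_{W^{1,2}}$ can be computed in closed form, with the resulting constants visibly depending only on $T$. First I would write a generic $\xi\in\mathcal{X}_{k_0}(\Lambda_0)$ as $\xi(t)=a(1-t/T)+b(t/T)$ with $a=\xi(0)$ and $b=\xi(T)$, so that $\dot\xi\equiv(b-a)/T$ is constant and $\|\xi\|_2^2=\|a\|^2+\|b\|^2$.

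Next I would compute the $L^2$ norm directly. Substituting $u=t/T$ and expanding the integrand yields
\[
\|\xi\|_{L^2}^2=T\int_0^1\|a(1-u)+bu\|^2\,du=\frac{T}{3}\Big(\|a\|^2+\|b\|^2+\Re\langle a,b\rangle\Big),
\]
where the elementary integrals $\int_0^1(1-u)^2\,du=\int_0^1u^2\,du=\tfrac13$ and $\int_0^1(1-u)u\,du=\tfrac16$ are used. Applying Young's inequality $|\Re\langle a,b\rangle|\le\tfrac12(\|a\|^2+\|b\|^2)$ sandwiches the bracketed quantity between $\tfrac12\|\xi\|_2^2$ and $\tfrac32\|\xi\|_2^2$, giving
\[
\frac{T}{6}\|\xi\|_2^2\le\|\xi\|_{L^2}^2\le\frac{T}{2}\|\xi\|_2^2,
\]
which is the first pair of inequalities in \eqref{norm-inequalities} with $c_1^-=\sqrt{T/6}$ and $c_1^+=\sqrt{T/2}$.

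For the $W^{1,2}$ norm, since $\dot\xi$ is constant I get $\|\dot\xi\|_{L^2}^2=\tfrac1T\|b-a\|^2$, and the elementary bound $0\le\|b-a\|^2\le 2(\|a\|^2+\|b\|^2)=2\|\xi\|_2^2$ controls this term. Adding it to the previous estimate gives
\[
\frac{T}{6}\|\xi\|_2^2\le\|\xi\|_{W^{1,2}}^2\le\Big(\frac{T}{2}+\frac{2}{T}\Big)\|\xi\|_2^2,
\]
so one may take $c_2^-=\sqrt{T/6}$ and $c_2^+=\sqrt{T/2+2/T}$.

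There is no serious obstacle here; the only point that genuinely needs care is that the constants be independent of $\Lambda_0$ and of $k_0$. This is automatic because the entire computation takes place in the space of all $\mathbf{C}^n$-valued affine functions on $[0,T]$, which contains every $\mathcal{X}_{k_0}(\Lambda_0)$ as a subspace, and the constants produced are expressed purely in terms of $T$. I would emphasize that a mere abstract appeal to the equivalence of norms on a finite-dimensional space does not suffice, since it would not exhibit this uniformity across the different layers $\Lambda_0$.
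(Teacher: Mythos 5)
Your computation is correct, and it reproduces, with identical constants, the direct-computation argument that the paper itself records as an alternative: the paper states exactly $(T/6)^{1/2}\|\xi\|_2\leq\|\xi\|_{L^2}\leq(T/2)^{1/2}\|\xi\|_2$ and $(T/6)^{1/2}\|\xi\|_2\leq\|\xi\|_{W^{1,2}}\leq\bigl((T^2+4)/(2T)\bigr)^{1/2}\|\xi\|_2$, and your $T/2+2/T=(T^2+4)/(2T)$ matches. However, the paper's primary proof is precisely the abstract route you dismiss at the end: it introduces the single fixed space $V=\{\xi:\xi(t)=a+\tfrac{t}{T}(b-a),\ t\in[0,T]\}$ of all $\mathbf{C}^n$-valued affine functions (complex dimension $2n$), notes that $\|\cdot\|_2$, $\|\cdot\|_{L^2}$ and $\|\cdot\|_{W^{1,2}}$ are all norms on $V$, invokes equivalence of norms on this finite-dimensional space to obtain constants valid on all of $V$, and then uses the containment $\mathcal{X}_{k_0}(\Lambda_0)\subset V$ for every $\Lambda_0$. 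Your closing claim that ``a mere abstract appeal to the equivalence of norms on a finite-dimensional space does not suffice'' is therefore inaccurate: it fails only if norm equivalence is applied to each $\mathcal{X}_{k_0}(\Lambda_0)$ separately, but applied once to the fixed ambient space $V$ it yields constants independent of $\Lambda_0$ and $k_0$ --- uniformity comes from exactly the same containment $\mathcal{X}_{k_0}(\Lambda_0)\subset V$ that you invoke for your own constants. What your route buys is explicitness: the constants are exhibited and visibly depend only on $T$ (not even on $n$), whereas the abstract argument produces unspecified constants that a priori could also depend on $n$.
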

\begin{proof}  	Let $V=\{\xi:\xi(t)=a+{t\over T}(b-a),t\in[0,T],(0,0,a^T,b^T)^T\in \Lambda_N\}$.
	Then the dimension of $V\cong \Lambda_N $ is $2n$.
	So there exist $c^\pm_1, c^\pm_2>0$ such that
$$
	c_1^- \|\xi\|_2\leq  \|\xi\|_{L^2}\leq  c_1^+ \|\xi\|_2, \quad   c_2^- \|\xi\|_2\leq  \|\xi\|_{W^{1,2}}\leq  c_2^+ \|\xi\|_2,\quad \forall \;\xi\in V.
$$
 Now \eqref{norm-inequalities} follows from the fact that $\mathcal{X}_{k_0}(\Lambda_0)\subset V $ for any $\Lambda_0$.  On the other hand,  we can also prove   \eqref{norm-inequalities}  by direct computation.   More precisely,
$$
\begin{aligned}
\left({T\over6}\right)^{1\over2}\|\xi\|_2\leq \|\xi\|_{L^2}\leq\left({T\over2}\right)^{1\over2}\|\xi\|_2,
\end{aligned}
$$
and
\begin{equation*}
\begin{aligned}
\left({T\over6}\right)^{1\over2}\|\xi\|_2\leq\|\xi\|_{W^{1,2}}\leq\left({T^2+4\over2 T} \right)^{1\over2}\|\xi\|_2,
\end{aligned}
\end{equation*}
which  gives the exact values of $c^\pm_1, c^\pm_2$.
  \end{proof}

Denote $H_0=H_{\Lambda_D}$ for convenience.  Then we get the decomposition of $H_{\Lambda_0}$.
 \begin{lem}\label{lem:decompo}
$$ H_{\Lambda_0}=H_0\oplus \mathcal{X}_{k_0}(\Lambda_0). $$
\end{lem}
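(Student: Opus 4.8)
The plan is to realize this decomposition concretely at the level of endpoint values, using the fact that every element of $\mathcal{X}_{k_0}(\Lambda_0)$ is an affine function on $[0,T]$ (as already exploited in Lemma \ref{equinorm}) and is therefore determined uniquely by the pair $(\xi(0),\xi(T))$. First I would check that both summands actually sit inside $H_{\Lambda_0}$. For $H_0=W^{1,2}_0$ this is clear, since $\xi(0)=\xi(T)=0$ gives boundary vector $0\in V(\Lambda_0)$. For $\mathcal{X}_{k_0}(\Lambda_0)=\text{span}\{\xi_i:1\le i\le k_0\}$ it follows because $(\xi_i(0)^T,\xi_i(T)^T)^T=e_i\in V(\Lambda_0)$ and $V(\Lambda_0)$ is a subspace, so any linear combination of the $\xi_i$ again has its boundary vector in $V(\Lambda_0)$. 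Thus $H_0+\mathcal{X}_{k_0}(\Lambda_0)\subseteq H_{\Lambda_0}$.

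Next I would prove the sum is direct, i.e. $H_0\cap\mathcal{X}_{k_0}(\Lambda_0)=\{0\}$. If $\xi$ lies in the intersection, then membership in $\mathcal{X}_{k_0}(\Lambda_0)$ forces $\xi(t)=\xi(0)+\frac{t}{T}(\xi(T)-\xi(0))$, while membership in $H_0=W^{1,2}_0$ forces $\xi(0)=\xi(T)=0$; these two facts together give $\xi\equiv0$.

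Finally I would establish the reverse inclusion $H_{\Lambda_0}\subseteq H_0+\mathcal{X}_{k_0}(\Lambda_0)$. Given $\xi\in H_{\Lambda_0}$, its boundary vector $b_\xi:=(\xi(0)^T,\xi(T)^T)^T$ lies in $V(\Lambda_0)$ by definition of $H_{\Lambda_0}$. Since $\{e_i\}_{1\le i\le k_0}$ is a basis of $V(\Lambda_0)$, I write $b_\xi=\sum_{i=1}^{k_0}a_i e_i$ with unique scalars $a_i$ and set $\tilde\xi:=\sum_{i=1}^{k_0}a_i\xi_i\in\mathcal{X}_{k_0}(\Lambda_0)$. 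By construction $\tilde\xi$ and $\xi$ share the same boundary vector $b_\xi$, so $\xi-\tilde\xi\in W^{1,2}$ vanishes at both endpoints, whence $\xi-\tilde\xi\in W^{1,2}_0=H_0$. Therefore $\xi=(\xi-\tilde\xi)+\tilde\xi\in H_0+\mathcal{X}_{k_0}(\Lambda_0)$, which combined with the earlier steps yields $H_{\Lambda_0}=H_0\oplus\mathcal{X}_{k_0}(\Lambda_0)$. There is no genuine analytic obstacle in this argument; the only point requiring attention is the observation that $\mathcal{X}_{k_0}(\Lambda_0)$ was constructed so that the boundary-value map $\xi\mapsto(\xi(0)^T,\xi(T)^T)^T$ restricts to a linear isomorphism $\mathcal{X}_{k_0}(\Lambda_0)\to V(\Lambda_0)$, and this is precisely the boundary constraint cutting out $H_{\Lambda_0}$ inside $W^{1,2}$.
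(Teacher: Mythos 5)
Your proof is correct and follows essentially the same route as the paper's: pick $\tilde\xi\in\mathcal{X}_{k_0}(\Lambda_0)$ matching the boundary values of a given $\xi\in H_{\Lambda_0}$ (which the paper does implicitly and you make explicit via the basis $\{e_i\}$ of $V(\Lambda_0)$), note $\xi-\tilde\xi\in H_0$, and conclude directness from the fact that an affine function vanishing at both endpoints is identically zero. The only additions are the routine verifications that both summands lie in $H_{\Lambda_0}$, which the paper omits but which change nothing substantive.
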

\begin{proof}  For any $x\in H_{\Lambda_0} $, we have $x\in W^{1,2}([0,T], \mathbf{C}^{n}   )$ and $(x(0)^T,x(T)^T)^T\in V(\Lambda_0)$. We choose $\tilde{x}\in  \mathcal{X}_{k_0}(\Lambda_0) $ such that $(\tilde{x}(0), \tilde{x}(T))=(x(0),x(T))$. Then $x-\tilde{x}\in H_0$.  Since $\mathcal{X}_{k_0}(\Lambda_0)$ consists of  linear functions, we get $\mathcal{X}_{k_0}(\Lambda_0) \cap H_0=\{0\}$.
 \end{proof}

Now we are ready to  give the relationship of $\lambda_j$ and $\tilde \lambda_j$.

 \begin{prop}\label{prop:low bound}  Let $\mathcal{S}\subset \text{Lag}(\mathcal{V} , \omega_{2n}) $ and $c\in\mathbf{R}$ such that  $\tilde{\lambda}_j(\Lambda)\leq c$ and $\dim V(\Lambda)=k_0$ for all $\Lambda\in \mathcal{S}$. Then  $\lambda_j$ has a uniformly lower bound on $\mathcal{S}$.
\end{prop}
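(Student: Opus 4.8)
The plan is to bound $\lambda_j(\Lambda_0)$ from below by feeding into the variational characterization \eqref{eigen vara} \emph{not} an $L^2$-orthogonal complement, but a subspace of $H_{\Lambda_0}$ cut out by the boundary (trace) functionals attached to the large eigenvalues of $A$. First I would read off from \eqref{eq:index form} that
$I_{\Lambda_0}(\xi,\xi)=\mathcal{E}(\xi)-\langle A\,b(\xi),b(\xi)\rangle$, where $\mathcal{E}(\xi)=\int_0^T\{\langle P\dot\xi,\dot\xi\rangle+2\,\Re\langle Q\xi,\dot\xi\rangle+\langle R\xi,\xi\rangle\}\,dt$ and $b(\xi)=(\xi(0)^T,\xi(T)^T)^T\in V(\Lambda_0)$. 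Let $e_1,\dots,e_{k_0}$ be an orthonormal basis of eigenvectors of the Hermitian matrix $A$ with $\tilde\lambda_1(\Lambda_0)\ge\cdots\ge\tilde\lambda_{k_0}(\Lambda_0)$, and set $\beta_i(\xi)=\langle b(\xi),e_i\rangle$, so that $\langle A\,b(\xi),b(\xi)\rangle=\sum_{i=1}^{k_0}\tilde\lambda_i(\Lambda_0)\,|\beta_i(\xi)|^2$. The only obstruction to uniformity is the possibly unbounded top eigenvalues $\tilde\lambda_1,\dots,\tilde\lambda_{j-1}$.

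Accordingly, I would introduce $I_b=\{\,i:\tilde\lambda_i(\Lambda_0)>c\,\}$; the hypothesis $\tilde\lambda_j(\Lambda_0)\le c$ forces $I_b\subseteq\{1,\dots,j-1\}$. I then work on the subspace $W=\{\xi\in H_{\Lambda_0}:\beta_i(\xi)=0\text{ for all }i\in I_b\}$. Because $\xi\mapsto b(\xi)$ maps $H_{\Lambda_0}$ onto $V(\Lambda_0)$ (any boundary value in $V(\Lambda_0)$ is realized by an element of $\mathcal{X}_{k_0}(\Lambda_0)$, by Lemma \ref{lem:decompo}), these $|I_b|$ functionals are independent, so $\operatorname{codim}_{H_{\Lambda_0}}W=|I_b|\le j-1$.

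On $W$ the boundary term retains only eigenvalues $\le c$: $\langle A\,b(\xi),b(\xi)\rangle=\sum_{i\notin I_b}\tilde\lambda_i|\beta_i|^2\le c\,\|b(\xi)\|^2=c\,(|\xi(0)|^2+|\xi(T)|^2)$. Combining the uniform G\aa rding estimate $\mathcal{E}(\xi)\ge \tfrac{p_0}{2}\|\dot\xi\|_{L^2}^2-C_0\|\xi\|_{L^2}^2$ (with $p_0=\min_t\lambda_{\min}(P(t))>0$ and $C_0$ depending only on $\|Q\|_\infty,\|R\|_\infty$) with the one-dimensional trace interpolation $|\xi(0)|^2+|\xi(T)|^2\le \varepsilon\|\dot\xi\|_{L^2}^2+C_\varepsilon\|\xi\|_{L^2}^2$, and choosing $\varepsilon$ so that $c\,\varepsilon\le p_0/4$, I obtain a constant $M=M(P,Q,R,T,c)>0$, \emph{independent of} $\Lambda_0\in\mathcal{S}$, with $I_{\Lambda_0}(\xi,\xi)\ge -M\|\xi\|_{L^2}^2$ for every $\xi\in W$.

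Finally I would close by a dimension count rather than by the $L^2$-orthogonal form of \eqref{eigen vara}. Let $V^{\ast}=\operatorname{span}\{\phi_1,\dots,\phi_j\}$ be spanned by the first $j$ eigenfunctions of $\mathcal{A}_{\Lambda_0}$; then $I_{\Lambda_0}(\xi,\xi)\le\lambda_j(\Lambda_0)\|\xi\|_{L^2}^2$ for all $\xi\in V^{\ast}$. Since $\dim V^{\ast}=j>j-1\ge\operatorname{codim}W$, the intersection $V^{\ast}\cap W$ contains some $\xi\neq0$, and for it $-M\le I_{\Lambda_0}(\xi,\xi)/\|\xi\|_{L^2}^2\le\lambda_j(\Lambda_0)$, whence the uniform bound $\lambda_j(\Lambda_0)\ge -M$ on $\mathcal{S}$. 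The delicate point, which I expect to be the main obstacle, is exactly the choice of $W$: the tempting choice $E_{j-1}=\mathcal{X}_{j-1}(\Lambda_0)$ inside \eqref{eigen vara} fails, because $L^2$-orthogonality to these linear functions is too weak\,---\,as $H_0$ is $L^2$-dense one can produce test functions with prescribed large boundary values that are $L^2$-orthogonal to $\mathcal{X}_{j-1}(\Lambda_0)$, driving the Rayleigh quotient to $-\infty$. Cutting instead with the genuine trace functionals $\beta_i$ and invoking min-max in its linear-algebraic (intersection) form circumvents this, since that form requires no $L^2$-continuity of the constraints defining $W$.
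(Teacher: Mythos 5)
Your proof is correct and is essentially the paper's own argument: the paper likewise restricts to the codimension-$(j-1)$ subspace $H_0\oplus(\mathcal{X}_{k_0}(\Lambda)\ominus\mathcal{X}_{j-1}(\Lambda))$ of $H_{\Lambda}$ — which is exactly your $W$ cut out by the trace functionals $\beta_i$ (with $I_b$ enlarged to all of $\{1,\dots,j-1\}$) — proves the same uniform G\aa rding-type lower bound for $I_{\Lambda}$ there using the trace interpolation inequality and $\tilde\lambda_j(\Lambda)\leq c$, and then concludes by min-max. The only real difference is that you make explicit the dimension-counting (codimension) form of the min-max principle, which the paper uses implicitly when it applies \eqref{eigen vara} to a subspace that is not an $L^2$-orthogonal complement; your version is, if anything, slightly more careful on that point.
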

\begin{proof}
For any $\xi\in H_0\oplus (\mathcal{X}_{k_0}(\Lambda)\ominus\mathcal{X}_{j-1}(\Lambda))$ and any $\varepsilon>0$, there exists $C_\varepsilon>0$, independent of $\Lambda\in\mathcal{S}$, such that
$$\|\xi\|_{C}^2\leq \varepsilon\|\dot{\xi}\|_{L^2}^2+C_\varepsilon\|\xi\|_{L^2}^2,$$
and
$$\langle A\begin{bmatrix} \xi(0)\\ \xi(T) \end{bmatrix},\begin{bmatrix} \xi(0)\\ \xi(T) \end{bmatrix}\rangle\leq \tilde \lambda_j(\Lambda)\|\xi\|_2^2\leq 2c\|\xi\|_{C}^2\leq 2c\varepsilon\|\dot{\xi}\|_{L^2}^2+2cC_\varepsilon\|\xi\|_{L^2}^2.$$
Then we get by (\ref{eq:index form}) that there exists $c_1>0$, $c_2<0$ and $\tilde c_1\in\mathbf{R}$, independent of $\Lambda\in\mathcal{S}$, such that
\begin{equation*}
\begin{aligned}
I_{\Lambda}(\xi,\xi)\geq &c_1\|\dot{\xi}\|_{L^2}^2+c_2\|\xi\|_{L^2}\|\dot{\xi}\|_{L^2}+\tilde c_1\|\xi\|_{L^2}^2- 2c\varepsilon\|\dot{\xi}\|_{L^2}^2-2cC_\varepsilon\|\xi\|_{L^2}^2\\
\geq& c_1\|\dot{\xi}\|_{L^2}^2+c_2\varepsilon\|\dot{\xi}\|_{L^2}^2+{c_2\over\varepsilon}\|\xi\|_{L^2}^2+\tilde c_1\|\xi\|_{L^2}^2- 2c\varepsilon\|\dot{\xi}\|_{L^2}^2-2cC_\varepsilon\|\xi\|_{L^2}^2\\
=&(c_1+c_2\varepsilon-2c\varepsilon)\|\dot{\xi}\|_{L^2}^2+\left({c_2\over\varepsilon}+\tilde c_1-2cC_\varepsilon\right)\|\xi\|_{L^2}^2
\geq\left({c_2\over\varepsilon}+\tilde c_1-2cC_\varepsilon\right)\|\xi\|_{L^2}^2
\end{aligned}
\end{equation*}
for  $\xi\in H_0\oplus (\mathcal{X}_{k_0}(\Lambda)\ominus\mathcal{X}_{j-1}(\Lambda))$, where $\varepsilon>0$ is small enough such that
$c_1+c_2\varepsilon-2c\varepsilon>0$.
By Lemma \ref{lem:decompo},
$H_0\oplus (\mathcal{X}_{k_0}(\Lambda)\ominus\mathcal{X}_{j-1}(\Lambda))  $ is a closed subspace of $H_{\Lambda}$ with codimension $j-1$.
Then we get by (\ref{eigen vara}) that
$\lambda_j(\Lambda)\geq {c_2\over\varepsilon}+\tilde c_1-2cC_\varepsilon$ for all $\Lambda\in\mathcal{S}$.
 \end{proof}

  \begin{prop}\label{prop: eigen upper bound}  Let $\mathcal{S}\subset \text{Lag}(\mathcal{V} , \omega_{2n}) $ and $\dim V(\Lambda)=k_0$ for all $\Lambda\in \mathcal{S}$. Then
   for any $1\leq j\leq k_0$,  there exist   $c_3>0$ and $c_4\in\mathbf{R}$, which are independent of $\Lambda$,
  such that
  \begin{equation*} \lambda_j(\Lambda)\leq -c_3 \tilde{\lambda}_j(\Lambda)+c_4,\;\; \Lambda\in \mathcal{S}. \end{equation*}
\end{prop}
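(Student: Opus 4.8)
The plan is to bound $\lambda_j(\Lambda)$ from above through the min-max (Courant--Fischer) principle, testing against the finite-dimensional space $\mathcal{X}_j(\Lambda)=\mathrm{span}\{\xi_1,\dots,\xi_j\}$ built from the top $j$ eigenvectors of $A$. In the dual form of \eqref{eigen vara}, namely $\lambda_j(\Lambda)=\inf\{\sup_{\xi\in F\setminus\{0\}} I_\Lambda(\xi,\xi)/\|\xi\|_{L^2}^2 : F\subset H_\Lambda,\ \dim F=j\}$ (see \cite{CH,RS}), the choice $F=\mathcal{X}_j(\Lambda)$ gives $\lambda_j(\Lambda)\le \sup_{\xi\in\mathcal{X}_j(\Lambda)\setminus\{0\}} I_\Lambda(\xi,\xi)/\|\xi\|_{L^2}^2$, so everything reduces to estimating \eqref{eq:index form} on $\mathcal{X}_j(\Lambda)$. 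Two uniform estimates drive the argument. First, for $\xi\in\mathcal{X}_j(\Lambda)$ the boundary vector $(\xi(0)^T,\xi(T)^T)^T$ lies in $\mathrm{span}\{e_1,\dots,e_j\}$, on which $A\ge \tilde\lambda_j(\Lambda)$, so the boundary term obeys $\langle A[\xi(0);\xi(T)],[\xi(0);\xi(T)]\rangle\ge \tilde\lambda_j(\Lambda)\|\xi\|_2^2$. Second, since $\mathcal{X}_j(\Lambda)$ consists of affine functions, Lemma \ref{equinorm} together with the sup-bounds of $P,Q,R$ produces a constant $M>0$, depending only on $P,Q,R,T$, such that the integral part of \eqref{eq:index form} is at most $M\|\xi\|_2^2$ for all such $\xi$ and all $\Lambda$. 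Combining these gives $I_\Lambda(\xi,\xi)\le (M-\tilde\lambda_j(\Lambda))\|\xi\|_2^2$.

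In the regime $\tilde\lambda_j(\Lambda)\ge M$ this at once yields the desired decay: since $M-\tilde\lambda_j(\Lambda)\le 0$ and $\|\xi\|_2^2\ge \|\xi\|_{L^2}^2/(c_1^+)^2$ by Lemma \ref{equinorm}, multiplying the latter by the nonpositive factor flips the inequality, so $I_\Lambda(\xi,\xi)/\|\xi\|_{L^2}^2\le (M-\tilde\lambda_j(\Lambda))/(c_1^+)^2$. Taking the supremum over $\mathcal{X}_j(\Lambda)$ gives $\lambda_j(\Lambda)\le -c_3\tilde\lambda_j(\Lambda)+c_3 M$ with $c_3:=1/(c_1^+)^2>0$, which is exactly the required linear rate.

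The main obstacle is the opposite regime $\tilde\lambda_j(\Lambda)<M$ (in particular $\tilde\lambda_j(\Lambda)\to-\infty$): there $M-\tilde\lambda_j(\Lambda)>0$, the sharp constant in the crude estimate flips to $1/(c_1^-)^2$, and the resulting slope $-1/(c_1^-)^2$ is strictly steeper than $-c_3$, so no single line of slope $-c_3$ can dominate the test-function bound. The point is that this bound is then genuinely lossy, and must be replaced by an a priori, $\Lambda$-independent cap. By Lemma \ref{lem:decompo}, $H_0=H_{\Lambda_D}\subset H_\Lambda$, and the boundary term in \eqref{eq:index form} vanishes on $H_0$, so $I_\Lambda|_{H_0}=I_{\Lambda_D}|_{H_0}$; restricting the min-max to test subspaces $F\subset H_0$ gives the domain-monotonicity bound $\lambda_j(\Lambda)\le \lambda_j(\Lambda_D)$ for every $\Lambda$. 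Hence, when $\tilde\lambda_j(\Lambda)<M$, we have $-c_3\tilde\lambda_j(\Lambda)>-c_3 M$ and therefore $\lambda_j(\Lambda)\le \lambda_j(\Lambda_D)< -c_3\tilde\lambda_j(\Lambda)+\bigl(c_3 M+\max\{0,\lambda_j(\Lambda_D)\}\bigr)$. Choosing $c_3=1/(c_1^+)^2$ and $c_4=c_3 M+\max\{0,\lambda_j(\Lambda_D)\}$ then covers both regimes simultaneously and yields $\lambda_j(\Lambda)\le -c_3\tilde\lambda_j(\Lambda)+c_4$ for all $\Lambda\in\mathcal{S}$. The crux is precisely this pairing: the variational test against $\mathcal{X}_j(\Lambda)$ supplies the correct slope only when $\tilde\lambda_j$ is large, and it must be combined with the elementary Dirichlet comparison to obtain one constant $c_3$ valid uniformly in $\Lambda$.
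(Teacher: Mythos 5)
Your proposal is correct, and its engine is the same as the paper's: test the min-max principle against the $j$-dimensional space $\mathcal{X}_j(\Lambda)$ of affine interpolants of the top $j$ eigenvectors of $A$, bound the boundary term of \eqref{eq:index form} from below by $\tilde{\lambda}_j(\Lambda)\|\xi\|_2^2$, bound the integral part by $M\|\xi\|_2^2$ uniformly in $\Lambda$, and pass to the $L^2$ norm via Lemma \ref{equinorm}. (The paper packages the variational step with the sup-inf form \eqref{eigen vara} and the observation $\mathcal{X}_j(\Lambda)\cap E_{j-1}^\perp\neq\{0\}$; your inf-sup form is equivalent.) The genuine difference is in the final norm-conversion step, and it works in your favor. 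The paper converts $(c-\tilde{\lambda}_j(\Lambda))\|\xi\|_2^2$ into an $L^2$ bound with a slope that depends on the sign of $\tilde{\lambda}_j(\Lambda)$: it takes $c_3=(c_1^+)^{-2}$ when $\tilde{\lambda}_j(\Lambda)>0$ and $c_3=(c_1^-)^{-2}$ when $\tilde{\lambda}_j(\Lambda)<0$. As you correctly observe, these two one-sided bounds cannot be merged into a single line of slope $-(c_1^+)^{-2}$ merely by enlarging $c_4$, since the discrepancy $\bigl((c_1^-)^{-2}-(c_1^+)^{-2}\bigr)\,|\tilde{\lambda}_j(\Lambda)|$ is unbounded as $\tilde{\lambda}_j(\Lambda)\to-\infty$; taken literally, the paper's proof therefore produces a $c_3$ that is not independent of $\Lambda$. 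Your patch --- capping $\lambda_j(\Lambda)\le\lambda_j(\Lambda_D)$ in the regime $\tilde{\lambda}_j(\Lambda)<M$, which follows from $H_0\subset H_\Lambda$ (Lemma \ref{lem:decompo}), the identity $I_\Lambda|_{H_0}=I_{\Lambda_D}|_{H_0}$, and min-max --- removes this $\Lambda$-dependence and yields the proposition exactly as stated, with $c_3=(c_1^+)^{-2}$ and an enlarged $c_4$. The extra ingredient is cheap (the same Dirichlet comparison is in effect re-derived in the paper in \eqref{1.2.1-first part}), and it is not needed for the paper's application of this proposition (Proposition \ref{prop: discontinuous} only invokes it along paths with $\tilde{\lambda}_j(\Lambda_s)\to+\infty$, where the slope is $(c_1^+)^{-2}$ anyway), but it is what makes the uniformity claim in the statement literally true. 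In short: same core argument, with a correct and worthwhile strengthening at the end.
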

\begin{proof}
For any $\xi\in \mathcal{X}_j(\Lambda)$,
   we have  by
   \eqref{eq:index form} and Lemma \ref{equinorm} that
     \begin{equation*}
 \begin{aligned}
 I_\Lambda(\xi,\xi) =& \int_{ 0 }^{ T }\{\langle P\dot{\xi},\dot{\xi}\rangle+\langle Q\xi,\dot{\xi}\rangle+\langle Q^{T}\dot{\xi},\xi\rangle+\langle R\xi,\xi\rangle\} dt-\langle A\begin{bmatrix} \xi(0)\\ \xi(T) \end{bmatrix},\begin{bmatrix} \xi(0)\\ \xi(T) \end{bmatrix}\rangle  \nonumber \\
   \leq & c\|\xi\|_2^2-\tilde{\lambda}_j(\Lambda) \|\xi\|_2^2\leq c_4\|\xi\|_{L^2}^2-c_3\tilde\lambda_j(\Lambda)\|\xi\|_{L^2}^2,
    \end{aligned}
    \end{equation*}
 where $c$ and $c_4$ depend only on $P, Q, R$ and $T$, $c_3=(c_1^+)^{-2}>0$ if $\tilde \lambda_j(\Lambda)>0$, and $c_3=(c_1^-)^{-2}>0$ if $\tilde \lambda_j(\Lambda)<0$. Since $\dim \mathcal{X}_j(\Lambda)=j$,  we have $ \mathcal{X}_j(\Lambda)\cap E^\perp_{j-1}\neq \{0\} $ for any fixed $j-1$ dimensional subspace  $E_{j-1}$ of $H_{\Lambda}$, and thus
  \begin{equation*} \inf_{\xi\in E^\perp_{j-1},\,  \xi\neq0} \ \frac{I_\Lambda(\xi,\xi)}{\|\xi\|_{L^2}^2}\leq -c_3 \tilde{\lambda}_j(\Lambda)+c_4.   \end{equation*}
The proof is complete  by \eqref{eigen vara}.
 \end{proof}

 Then we give some criteria for the continuity of $\lambda_j$.

  \begin{lem}\label{con: continuous}

  (1) Let $\mathcal{S}\subset \text{Lag}(\mathcal{V},\omega_{2n})$ and $\lambda_1$ be uniformly  bounded from below on $\mathcal{S}$. Then $\lambda_j$ is continuous on $\mathcal{S}$ for all $j\geq 1$.

  (2)  Let $\Lambda_s, s\in[0,\epsilon],$  be a continuous path in $\text{Lag}(\mathcal{V},\omega_{2n})$.  If $\lim_{s\to0^+}\lambda_j(\Lambda_s)=-\infty$ for all $1\leq j\leq j_0$, and $\lambda_{j_0+1}(\Lambda_s)$, $s\in (0,\epsilon]$, have a uniformly lower bound, then we have
  \begin{equation*}
  \lim_{s\to0^+}\lambda_j(\Lambda_s)=\lambda_{j-j_0}(\Lambda_0)
  \end{equation*}
for all $j>j_0$.
\end{lem}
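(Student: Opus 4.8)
The plan is to derive both parts from the min-max formula \eqref{eigen vara}, using the fixed decomposition $H_{\Lambda}=H_0\oplus\mathcal X_{k_0}(\Lambda)$ of Lemma \ref{lem:decompo} with $H_0=H_{\Lambda_D}$ common to all $\Lambda$. Two a priori bounds anchor the argument. First, since $H_0\subseteq H_{\Lambda}$ for every $\Lambda$ (a function vanishing at $0$ and $T$ satisfies any boundary condition) and $I_{\Lambda}$ coincides with $I_{\Lambda_D}$ on $H_0$ because the boundary term in \eqref{eq:index form} vanishes there, the min-max monotonicity in the form domain gives the uniform upper bound $\lambda_j(\Lambda)\le\lambda_j(\Lambda_D)$ for all $j$. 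Second, Propositions \ref{prop:low bound} and \ref{prop: eigen upper bound}, whose constants depend only on $P,Q,R,T$, convert control of $\lambda_1$ into control of the top eigenvalue $\tilde\lambda_1$ of the boundary matrix $A$: a uniform lower bound on $\lambda_1$ forces $\tilde\lambda_1$, hence all $\tilde\lambda_j$, to be uniformly bounded above, and then all $\lambda_j$ are uniformly bounded below.

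For Part (1) I would fix $\Lambda_*\in\mathcal S$, take $\Lambda_m\to\Lambda_*$, and prove the two one-sided estimates. For $\limsup_m\lambda_j(\Lambda_m)\le\lambda_j(\Lambda_*)$, transport a minimizing $j$-dimensional subspace of $H_{\Lambda_*}$ into $H_{\Lambda_m}$ by a small correction of the boundary values so that they lie in $V(\Lambda_m)$; since $\Lambda_m\to\Lambda_*$ these corrections are $o(1)$ in $W^{1,2}$ and the Rayleigh quotients converge. For $\liminf_m\lambda_j(\Lambda_m)\ge\lambda_j(\Lambda_*)$, take $L^2$-normalized near-minimizers $\phi_m\in H_{\Lambda_m}$; the coercivity displayed in the proof of Proposition \ref{prop:low bound} bounds $\|\phi_m\|_{W^{1,2}}$, so a subsequence converges weakly in $W^{1,2}$ and, by the compact embedding $W^{1,2}([0,T])\hookrightarrow C([0,T])$, uniformly, to some $\phi_*$. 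The decisive point is that $\phi_*\in H_{\Lambda_*}$: in each direction where $\dim(\Lambda\cap\Lambda_D)$ jumps up in the limit, the eigenvalue of $\mathcal U(\Lambda_m)$ tends to $1$, so the corresponding boundary-matrix eigenvalue tends to $\pm\infty$; the uniform lower bound on $\lambda_1$ excludes $+\infty$, leaving $-\infty$, so the boundary term $-\langle A[\cdot],[\cdot]\rangle$ is large and positive in those directions and a near-minimizer must have vanishing component there. Thus $\phi_*$ retains no boundary component in the degenerating directions, lands in $H_{\Lambda_*}$, and realizes $\lambda_j(\Lambda_*)$.

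For Part (2) I would fix $T_0>0$ with $-T_0<\lambda_1(\Lambda_0)$ and $-T_0\notin\sigma(\mathcal A_{\Lambda_0})$, so that for all small $s$ one has $\lambda_{j_0}(\Lambda_s)<-T_0<\lambda_{j_0+1}(\Lambda_s)$, while $\lambda_j(\Lambda_s)$ is trapped between the assumed lower bound on $\lambda_{j_0+1}$ and the upper bound $\lambda_j(\Lambda_D)$ for each $j>j_0$. Writing $j=j_0+k$, I would establish $\lambda_{j_0+k}(\Lambda_s)\to\lambda_k(\Lambda_0)$ by a two-sided min-max estimate. The upper bound tests $\mathcal A_{\Lambda_s}$ on the $(j_0+k)$-dimensional space spanned by the $j_0$ escaping eigenfunctions, whose Rayleigh quotients tend to $-\infty$ and so do not raise the supremum, together with $k$ functions in $H_{\Lambda_s}$ approximating the first $k$ eigenfunctions of $\mathcal A_{\Lambda_0}$ via the same small boundary correction as in Part (1); this gives $\limsup_s\lambda_{j_0+k}(\Lambda_s)\le\lambda_k(\Lambda_0)$. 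The matching lower bound uses the same weak-$W^{1,2}$/uniform compactness: the eigenfunctions for eigenvalues above $-T_0$ do not concentrate in the escaping directions, so their limits are $L^2$-orthonormal elements of $H_{\Lambda_0}$ with Rayleigh quotients at most $\liminf_s\lambda_{j_0+k}(\Lambda_s)$; if this were below $\lambda_k(\Lambda_0)$ it would produce a $k$-dimensional subspace on which $I_{\Lambda_0}$ stays below $\lambda_k(\Lambda_0)$, contradicting the min-max count for $\mathcal A_{\Lambda_0}$.

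The main obstacle, shared by both parts, is that the form domain $H_{\Lambda}$ genuinely varies with $\Lambda$, and $\dim V(\Lambda)$ jumps exactly at the singular boundary conditions, so the forms cannot be compared on a fixed space and a limit of near-minimizers need not a priori satisfy the limiting boundary condition. The resolution in both cases is the same decoupling of the degenerating directions: the sign of the diverging boundary-matrix eigenvalue — controlled by the lower bound on $\lambda_1$ in Part (1), and by the escape hypothesis in Part (2) — determines whether a direction is frozen into a Dirichlet direction (pushing its energy to $+\infty$) or escapes (carrying an eigenvalue to $-\infty$), and exactly this dichotomy yields convergence of the surviving spectrum to that of $\mathcal A_{\Lambda_0}$ with the index shift by $j_0$.
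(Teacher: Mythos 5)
Your route is genuinely different from the paper's, and as written it has real gaps. The paper's proof is soft: it invokes Theorem 3.16 of \cite{Kato1984} (persistence and continuity of an isolated finite system of eigenvalues under generalized convergence of closed operators). The uniform lower bound $r_1<\inf_{\mathcal{S}}\lambda_1$ enters only to fix the labelling: inside a window $(r_1,r_2)$ with $r_2\in(\lambda_{j_0}(\Lambda_0),\lambda_{j_0+1}(\Lambda_0))$ the eigenvalues of $\mathcal{A}_\Lambda$ are exactly $\lambda_1(\Lambda),\dots,\lambda_{j_0}(\Lambda)$ because no eigenvalue can enter the window from below, and Kato's theorem then gives local constancy of the count and continuity; part (2) is the same argument with the window placed above the $j_0$ escaping eigenvalues, producing the index shift. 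You instead attempt a hands-on min--max/compactness proof. For part (1) this can be pushed through: the hypothesis together with Proposition \ref{prop: eigen upper bound} forces all boundary eigenvalues $\tilde\lambda_j(\Lambda)$ to be uniformly bounded \emph{above} on $\mathcal{S}$, so the forms $I_\Lambda$ are uniformly coercive and every degenerating direction carries a positive penalty $|\tilde\lambda|\,|w|^2$, which is the sign your compactness argument needs. Even here, though, your transport step is stated incorrectly: correcting boundary values merely ``so that they lie in $V(\Lambda_m)$'' does not make the Rayleigh quotients converge, since an $o(1)$ boundary component in a degenerating direction contributes $|\tilde\lambda_m|\cdot o(1)^2$ to the form, a quantity with no limit; the corrections must land in the spectral subspace of $A_m$ corresponding to its bounded eigenvalues, which converges to $V(\Lambda_*)$ because $\mathcal{U}(\Lambda_m)\to\mathcal{U}(\Lambda_*)$.

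The serious gap is part (2), where the degenerating directions have $\tilde\lambda_i(\Lambda_s)\to+\infty$, i.e.\ the boundary term is a deep negative well, and both of your key assertions fail as stated. For the upper bound, the maximum of the Rayleigh quotient over the sum of the escaping span and the transported span is not controlled by the maxima over the factors: the cross terms are $I_{\Lambda_s}(\phi^{(i)}_s,\xi)=\lambda_i(\Lambda_s)\langle\phi^{(i)}_s,\xi\rangle$, of size $|\lambda_i|$ times the overlap. Projecting $\xi$ off the $\phi^{(i)}_s$ in $L^2$ kills the cross terms but adds $\sum_{i\le j_0}|\lambda_i(\Lambda_s)|\,|\langle\xi,\phi^{(i)}_s\rangle|^2$ to the form, so you need the overlaps to decay faster than $|\lambda_i(\Lambda_s)|^{-1/2}$. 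This is true, but only because the escaping eigenfunctions are boundary layers of width about $|\lambda_i|^{-1/2}$ whose boundary data is asymptotically orthogonal to that of the transported functions --- a quantitative estimate nowhere established in your sketch; ``do not raise the supremum'' is exactly the unproven crux. For the lower bound, in part (2) a bounded Rayleigh quotient does \emph{not} yield a $W^{1,2}$ bound: the coercivity constants in the proof of Proposition \ref{prop:low bound} blow up as the degenerating $\tilde\lambda$'s tend to $+\infty$, and a large kinetic term can be cancelled against the large negative well --- this is precisely the mechanism by which the first $j_0$ eigenvalues escape. Hence ``the eigenfunctions for eigenvalues above $-T_0$ do not concentrate in the escaping directions'' is not a consequence of anything you have established, and without it the weak limits need not exist, let alone lie in $H_{\Lambda_0}$. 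As it stands, part (2) of your proposal is not a proof; either supply these quantitative estimates or switch to the perturbation-theoretic argument the paper uses.
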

\begin{proof}
We first prove (1).
Let $r_1<\inf_{\Lambda\in\mathcal{S}}\lambda_1(\Lambda)$, $\Lambda_0\in\mathcal{S}$, and $j_0\geq1$   such that $\lambda_{j_0+1}(\Lambda_{0})> \lambda_{j_0}(\Lambda_{0})$.
Choose $r_2\in(\lambda_{j_0}(\Lambda_0),\lambda_{j_0+1}(\Lambda_0))$. It follows from
 Theorem 3.16 in \cite{Kato1984} that there exists a neighborhood $\mathcal{S}_0\subset\mathcal{S}$ of $\Lambda_0$  such that
 there are exactly $j_0$ eigenvalues (counting multiplicity)  of $\mathcal{A}_{\Lambda}$ with $\Lambda\in\mathcal{S}_0$ in $(r_1,r_2)$.
 Since $\lambda_1(\Lambda)>r_1$ for $\Lambda\in\mathcal{S}_0$, the above $j_0$ eigenvalues are exactly $\lambda_j(\Lambda), 1\leq j\leq j_0$.
    Let $\epsilon>0$ be small enough such that the intervals with radius $\epsilon>0$  centred at  the non-equal ones  of $\lambda_j(\Lambda_{0}), 1\leq j\leq j_0$, are contained in $(r_1,r_2)$. By  Theorem 3.16 in \cite{Kato1984} again, there exists $\mathcal{S}_1\subset\mathcal{S}_0$  such that $|\lambda_j(\Lambda)-\lambda_j(\Lambda_{0})|<\epsilon$ for any $\Lambda\in\mathcal{S}_1$. Therefore, (1) holds.

(2) can be shown by a similar method, and thus we omit the details.
 \end{proof}

Next, we study the asymptotic behavior of $\tilde\lambda_j$.

 \begin{lem} \label{asymptotic behavior tilde}
  Assume that $\Lambda_s, s\in[-\epsilon,\epsilon],$ satisfy \eqref{1.4} and \eqref{1.5}. Then we have
 \begin{equation}\label{no-tilde-and-tilde1}  \lim_{s\to 0^-} \tilde{\lambda}_j(\Lambda_s)=+\infty \quad  for  \quad  1\leq j\leq k_- ,
 \end{equation}
 and
 there exists $M^->0$ such that  $\tilde{\lambda}_j(\Lambda_s)\leq M^-$  on $s\in[-\epsilon,0)$ for $j> k_-$. Similarly,
\begin{equation*} \lim_{s\to 0^+} \tilde{\lambda}_j(\Lambda_s)=+\infty \quad  for  \quad  1\leq j\leq k_+ ,
\end{equation*}
 and
 there exists $M^+>0$ such that  $\tilde{\lambda}_j(\Lambda_s)\leq M^+$  on  $s\in(0, \epsilon]$ for $j> k_+$.
 \end{lem}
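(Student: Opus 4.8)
The plan is to translate the statement about the eigenvalues $\tilde\lambda_j(\Lambda_s)$ of the Hermitian matrix $A$ into a counting statement about the eigenvalues of the unitary matrices $\mathcal{U}(\Lambda_s)$ that lie near $1$, and then to extract that count from the Maslov index via Lemma \ref{Mas Dir} and the localization technique of Corollary \ref{cor2.5}. The key dictionary is the Cayley transform. Writing $k_0(s)=\dim V(\Lambda_s)$, we have $\dim(\Lambda_s\cap\Lambda_D)=2n-k_0(s)$ and, by \eqref{def-f}, $\mathcal{U}(\Lambda_s)=I_{2n-k_0(s)}\oplus U_{A(s)}$ with $U_{A(s)}=(A(s)+iI)(A(s)-iI)^{-1}$. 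An eigenvalue $\tilde\lambda$ of $A(s)$ produces the eigenvalue $e^{i\theta}=(\tilde\lambda+i)(\tilde\lambda-i)^{-1}$ of $U_{A(s)}$ with $\tilde\lambda=\cot(\theta/2)$; hence $\tilde\lambda\to+\infty$ corresponds exactly to $\theta\to 0^+$. Since the identity block contributes only the argument $\theta\equiv 0$, the counting function $\nu^+(\Lambda_s)$ (the number of eigenvalues of $\mathcal{U}(\Lambda_s)$ with argument in $(0,\theta_0)$) satisfies $\nu^+(\Lambda_s)=\#\{j:\tilde\lambda_j(\Lambda_s)>\cot(\theta_0/2)\}$. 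I would first record this identity, together with the observation that $\nu^+(\Lambda_0)=0$ once $\theta_0$ is taken smaller than every argument of the finitely many (non-unit) eigenvalues of $U_{A(0)}$.

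Next I would localize and evaluate the Maslov index on the negative side. Fix a small $\theta_0\in(0,\pi)$ as above. Exactly as in the proof of Corollary \ref{cor2.5}, there is $\epsilon_0\in(0,\epsilon)$ with $e^{\pm i\theta_0}\notin\sigma(\mathcal{U}(\Lambda_s))$ for $s\in[-\epsilon_0,0]$, so the number of eigenvalues of $\mathcal{U}(\Lambda_s)$ in the arc $\{e^{i\theta}:\theta\in(-\theta_0,\theta_0)\}$ is constant and equals $c_0$ there; consequently $\nu^+(\Lambda_s)$ is constant on $[-\epsilon_0,0)$, since no eigenvalue crosses $e^{i\theta_0}$ and, by \eqref{1.4}, none crosses $1$ on $(-\epsilon,0)$. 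Call this constant value $\nu^+_-$. To pass from $-\epsilon_0$ to the full interval I would show $\mu(\Lambda_D,\Lambda_s,s\in[-\epsilon,-\epsilon_0])=0$: on this interval $\dim(\Lambda_s\cap\Lambda_D)=c_-$ is constant, so in the formula of Remark \ref{rm:def_mas_2} (with $\mathcal{U}(\Lambda_D)=I$) the $c_-$ identity-block arguments are $\equiv 0$ while each of the remaining continuous branches $\theta_j(s)$ never meets $2\pi\mathbf{Z}$, hence $E(\theta_j(s)/2\pi)$ is constant and every summand vanishes. Combining with \eqref{1.5} and path additivity gives $\mu(\Lambda_D,\Lambda_s,s\in[-\epsilon_0,0])=-k_-$, and then Lemma \ref{Mas Dir} yields $-k_-=\nu^+(\Lambda_0)-\nu^+(\Lambda_{-\epsilon_0})=-\nu^+_-$, i.e. $\nu^+_-=k_-$. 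Thus for every sufficiently small $\theta_0$ there is $\epsilon_0$ such that exactly $k_-$ eigenvalues of $A(s)$ exceed $\cot(\theta_0/2)$ for all $s\in[-\epsilon_0,0)$.

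From this the two conclusions follow directly. For $1\le j\le k_-$: given any $M>0$, choose $\theta_0$ with $\cot(\theta_0/2)>M$; since exactly $k_-$ eigenvalues lie above $\cot(\theta_0/2)$ and they are the $k_-$ largest, $\tilde\lambda_j(\Lambda_s)>\cot(\theta_0/2)>M$ for $s\in[-\epsilon_0,0)$, whence $\tilde\lambda_j(\Lambda_s)\to+\infty$. For $j>k_-$: at most $k_-$ eigenvalues exceed $\cot(\theta_0/2)$, so $\tilde\lambda_j(\Lambda_s)\le\tilde\lambda_{k_-+1}(\Lambda_s)\le\cot(\theta_0/2)$ on $[-\epsilon_0,0)$, and since $A(s)$ (hence $\tilde\lambda_j$) is continuous on the compact set $[-\epsilon,-\epsilon_0]$, a uniform upper bound $M^->0$ on $[-\epsilon,0)$ results. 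The positive side is identical, using $\mu(\Lambda_D,\Lambda_s,s\in[0,\epsilon])=k_+$, which via Lemma \ref{Mas Dir} gives $\nu^+_+=k_+$. The main obstacle I anticipate is bookkeeping across $s=0$: because $k_0(s)$ jumps, the matrix $A(s)$ gains $c_0-c_\pm$ eigenvalues that escape to $\pm\infty$, and the whole point is that the \emph{signed} Maslov index separates the $+\infty$ escapees (those crossing $1$ through $\theta\to 0^+$) from the $-\infty$ ones; making the localization of the index rigorous (the vanishing on the non-singular subinterval and the stability of the arc count) is where the care is needed, while the Cayley dictionary and the final limit/bound arguments are routine.
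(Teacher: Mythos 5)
Your proposal is correct and follows essentially the same route as the paper: both translate the statement via the Cayley transform $\tilde\lambda=\cot(\theta/2)$, use Lemma \ref{Mas Dir} (with the localization argument from Corollary \ref{cor2.5}, i.e.\ vanishing of the index on the crossing-free subinterval plus path additivity) to show that exactly $k_\pm$ eigenvalues of $\mathcal{U}(\Lambda_s)$ lie in an arbitrarily small arc $\{e^{i\theta}:\theta\in(0,\theta_0)\}$ near $s=0^\pm$, and then read off the divergence of the top $k_\pm$ eigenvalues of $A_s$ and the boundedness of the rest. If anything, you are slightly more careful than the paper in two places: the explicit verification that the Maslov index vanishes on $[-\epsilon,-\epsilon_0]$, and the compactness argument extending the upper bound for $j>k_-$ from a neighborhood of $0$ to all of $[-\epsilon,0)$, which the paper leaves implicit.
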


  \begin{proof}  We only prove the first conclusion, since others can be shown similarly.
		For any $\beta\in(0,\pi),$ there exists $\alpha\in(0,
		\beta)$  such that $S_\alpha\cap \sigma(\mathcal{U}(\Lambda_0))=\emptyset$, where $S_\alpha=\{e^{i\theta}|\theta\in (0,\alpha]\}$.
		So there exists $r\in(0,\epsilon)$ such that $e^{i\alpha }\notin \sigma(\mathcal{U}(\Lambda_s)), -r<s<0$.	
	It follows from Lemma \ref{Mas Dir} that  $\#(S_\alpha\cap \sigma(\mathcal{U}(\Lambda_s)))= k_-$.
Note that $\mathcal{U}(\Lambda_s)=\begin{pmatrix}I_{c_-}&0\\
0&(A_s+iI_{2n-c_-})(A_s-iI_{2n-c_-})^{-1}
\end{pmatrix}$, and
 thus there are exactly $k_-$ eigenvalues, denoted by $\tilde{ \lambda}_j(\Lambda_s),1\le j\le k_-$, of $A_s$ such that $(\tilde\lambda_j(\Lambda_s)+i)(\tilde\lambda_j(\Lambda_s)-i)^{-1}\in S_\alpha $ with $s\in (-r,0)$.
This implies $\tilde{\lambda}_j(\Lambda_s)>i(e^{i\alpha}+1)/(e^{i\alpha}-1)=\cot(\alpha/2)>\cot(\beta/2)$.
By the arbitrary choice of $\beta$,  we have

\begin{equation*} \lim_{s\to 0^-} \tilde{\lambda}_j(\Lambda_s)=+\infty \quad  \text{for}  \quad  1\leq j\leq k_-.
\end{equation*}
Fix any $\beta_0\in(0,\pi).$ Since $(\tilde\lambda_j(\Lambda_s)+i)(\tilde\lambda_j(\Lambda_s)-i)^{-1}\notin S_{\alpha_0}$ for all $-r_0<s<0$ and all $j>k_-$, we infer that $\tilde\lambda_j(\Lambda_s)<\cot(\alpha_0/2)$.
\end{proof}

Then we study the asymptotic behavior of $\lambda_j$  using that of $\tilde\lambda_j$.

\begin{prop}\label{prop: discontinuous}
  Assume that $\Lambda_s, s\in[-\epsilon,\epsilon],$ satisfy \eqref{1.4} and \eqref{1.5}. Then for any $j\geq1$,
 \bea  \lim_{s\to0^-}\lambda_j(\Lambda_s)=\lambda_{j-k_-}(\Lambda_0),
 \label{prop dis for neg}    \eea
    and
 \bea  \lim_{s\to0^+}\lambda_j(\Lambda_s)=\lambda_{j-k_+}(\Lambda_0).     \label{prop dis for pos}    \eea
     \end{prop}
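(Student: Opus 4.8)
The plan is to prove the one-sided limit \eqref{prop dis for pos}; the limit \eqref{prop dis for neg} then follows by the same argument with $k_+$, $c_+$, $M^+$ replaced by $k_-$, $c_-$, $M^-$ and left limits in place of right limits. The whole strategy is to translate the blow-up of the boundary eigenvalues $\tilde{\lambda}_j$ recorded in Lemma \ref{asymptotic behavior tilde} into the divergence of exactly $k_+$ eigenvalues $\lambda_j$, and then invoke the continuity criterion of Lemma \ref{con: continuous}(2) with $j_0=k_+$ to identify the limit as an eigenvalue of $\Lambda_0$ carrying a shifted index.

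First I would restrict to the regime $s\in(0,\epsilon]$, on which $\dim(\Lambda_s\cap\Lambda_D)=c_+$ is constant, so that $\dim V(\Lambda_s)=2n-c_+$ is constant and Propositions \ref{prop:low bound} and \ref{prop: eigen upper bound} apply with $\mathcal{S}=\{\Lambda_s:s\in(0,\epsilon]\}$. By Corollary \ref{cor2.5} we have $k_+\le c_0-c_+\le 2n-c_+=\dim V(\Lambda_s)$, so Proposition \ref{prop: eigen upper bound} is available for every $1\le j\le k_+$, giving $\lambda_j(\Lambda_s)\le -c_3\tilde{\lambda}_j(\Lambda_s)+c_4$. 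Combining this with the divergence $\tilde{\lambda}_j(\Lambda_s)\to+\infty$ supplied by Lemma \ref{asymptotic behavior tilde}, I obtain $\lim_{s\to0^+}\lambda_j(\Lambda_s)=-\infty$ for all $1\le j\le k_+$.

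It remains to pin down a uniform lower bound for the next eigenvalue. By Lemma \ref{asymptotic behavior tilde}, $\tilde{\lambda}_{k_++1}(\Lambda_s)\le M^+$ on $(0,\epsilon]$, so Proposition \ref{prop:low bound} (applied with $c=M^+$ and $j=k_++1$) furnishes a uniform lower bound for $\lambda_{k_++1}$ on $\mathcal{S}$. Feeding these two facts into Lemma \ref{con: continuous}(2) with $j_0=k_+$ yields $\lim_{s\to0^+}\lambda_j(\Lambda_s)=\lambda_{j-k_+}(\Lambda_0)$ for all $j>k_+$; for $1\le j\le k_+$ the identity also holds because $\lambda_{j-k_+}(\Lambda_0)=-\infty$ under the convention $\lambda_i=-\infty$ for $i\le0$. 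The main obstacle is not any single estimate---those are delivered by the earlier results---but the bookkeeping that ties the Maslov index $k_+$ to precisely the number of blow-up eigenvalues and guarantees $k_+\le\dim V(\Lambda_s)$ on each side; this is exactly what Lemma \ref{asymptotic behavior tilde} and Corollary \ref{cor2.5} were engineered to provide, so once the constancy of $\dim V$ on each one-sided interval is noted the assembly is routine.
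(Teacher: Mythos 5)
Your proposal is correct and follows essentially the same route as the paper's own proof: blow-up of the first $k_\pm$ eigenvalues via Proposition \ref{prop: eigen upper bound} combined with Lemma \ref{asymptotic behavior tilde}, a uniform lower bound on the next eigenvalue via Proposition \ref{prop:low bound}, and then Lemma \ref{con: continuous}(2) with $j_0=k_\pm$ to identify the limit. Your explicit verification that $k_+\le c_0-c_+\le 2n-c_+=\dim V(\Lambda_s)$ (so that Proposition \ref{prop: eigen upper bound} indeed applies for all $1\le j\le k_+$) is a detail the paper leaves implicit, and is a welcome addition.
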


\begin{proof}
We only prove (\ref{prop dis for neg}), and (\ref{prop dis for pos}) can be shown in a similar way.

Let $1\leq j\leq k_-$. Then by Proposition \ref{prop: eigen upper bound},
  $\lambda_j(\Lambda_s)\leq -c_3 \tilde{\lambda}_j(\Lambda_s)+c_4,$ where $c_3>0$,   $s\in[-\epsilon,0)$.
Thanks to (\ref{no-tilde-and-tilde1}), we have $\lim_{s\to0^-}\lambda_j(\Lambda_s)=-\infty$.

Let $j> k_-$. By  Lemma  \ref{asymptotic behavior tilde}, there exists
$M^->0$ such that  $\tilde{\lambda}_j(\Lambda_s)\leq M^-$ on $s\in[-\epsilon,0)$ for $j> k_-$. In view of Proposition \ref{prop:low bound}, we have
$\lambda_j(\Lambda_s)$,  $s\in[-\epsilon,0)$, have a uniformly lower bound for any $j> k_-$. Then it follows from (2) of Lemma \ref{con: continuous} that
$\lim_{s\to0^-}\lambda_{j}(\Lambda_s)=\lambda_{j-k_-}(\Lambda_0)$.
 \end{proof}

Now we are in a position to show Theorem \ref{thm: Maslov index continu}.


 \begin{proof}[Proof of Theorem \ref{thm: Maslov index continu}]
 We first prove that $\lambda_j$, $j\geq1$, are all continuous on $\{\Lambda_s:s\in[-\epsilon,0)\}$.
Since $\mu(\Lambda_D, \Lambda_s, s\in[-\epsilon,s_0])=0$ for any $s_0\in(-\epsilon,0)$, we have by Lemma \ref{asymptotic behavior tilde} that
$\tilde \lambda_1(\Lambda_s)<M^-$ for all $s\in[-\epsilon,s_0)$. Thanks to Proposition \ref{prop:low bound}, we get  that $\lambda_1(\Lambda_s)$,  $s\in[-\epsilon,s_0)$, have a uniformly lower bound. Then by (1) of Lemma \ref{con: continuous} and the arbitrary choice of $s_0\in(-\epsilon,0)$, we obtain the result. The continuity of  $\lambda_j$ on  $\{\Lambda_s:s\in(0,\epsilon]\}$ can be shown similarly.

Please note that  (\ref{jump12}) is obtained by Proposition \ref{prop: discontinuous}.
 \end{proof}

\section{Proof of Theorem \ref{thm: eigen range}}

In this section, we give the proof of Theorem \ref{thm: eigen range}.

 \begin{proof}[Proof of Theorem \ref{thm: eigen range}] The proof is complete by Propositions \ref{prop: range with out endpoints consideration}, \ref{left endpoint} and \ref{right endpoint}.
\end{proof}

\begin{proposition}\label{prop: range with out endpoints consideration} Fix any $j\geq1$ and $0\leq r\leq 2n$. Then  $$(\lambda_{j-(2n-r)}(\Lambda_D), \lambda_j(\Lambda_D))\subset \lambda_j(\Sigma_{r})\subset[\lambda_{j-(2n-r)}(\Lambda_D),  \lambda_j(\Lambda_D)].$$
\end{proposition}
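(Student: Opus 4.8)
The plan is to prove the two inclusions separately, obtaining the right-hand containment in the closed interval by a min--max comparison with the Dirichlet problem, and the left-hand containment of the open interval by sweeping a single path through $\Sigma_r$ and invoking the intermediate value theorem. For the containment $\lambda_j(\Sigma_r)\subset[\lambda_{j-(2n-r)}(\Lambda_D),\lambda_j(\Lambda_D)]$, fix $\Lambda\in\Sigma_r$. Then $\dim(\Lambda\cap\Lambda_D)=r$ forces $k_0=\dim V(\Lambda)=2n-r$, so by Lemma~\ref{lem:decompo} the Dirichlet form domain $H_0$ is a closed subspace of $H_{\Lambda}$ of codimension exactly $2n-r$, and on $H_0=W^{1,2}_0$ the boundary term of the index form \eqref{eq:index form} vanishes, so $I_{\Lambda}|_{H_0}=I_{\Lambda_D}$. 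First I would apply the min--max characterization \eqref{eigen vara}: since $H_0\subset H_{\Lambda}$ and the forms agree there, using $(j-1)$-dimensional test spaces inside $H_0$ gives $\lambda_j(\Lambda)\le\lambda_j(\Lambda_D)$, while a codimension count (any $j$-dimensional subspace meets $H_0$ in dimension $\ge j-(2n-r)$) gives the interlacing $\lambda_j(\Lambda_D)\le\lambda_{j+(2n-r)}(\Lambda)$, i.e. $\lambda_{j-(2n-r)}(\Lambda_D)\le\lambda_j(\Lambda)$. With the convention $\lambda_i=-\infty$ for $i\le0$ this is exactly the desired two-sided bound, and this half is routine.

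For the reverse containment $(\lambda_{j-(2n-r)}(\Lambda_D),\lambda_j(\Lambda_D))\subset\lambda_j(\Sigma_r)$, I would exhibit a continuous path in $\Sigma_r$ whose two endpoint limits of $\lambda_j$ are the two interval endpoints. Using that $\mathcal U$ is a homeomorphism onto $\mathbf U(2n)$, set $\Lambda_\theta:=\mathcal U^{-1}\!\left(I_r\oplus e^{i\theta}I_{2n-r}\right)$ for $\theta\in(0,2\pi)$. Since $e^{i\theta}\ne1$ we have $\dim(\Lambda_\theta\cap\Lambda_D)=\dim\ker(\mathcal U(\Lambda_\theta)-I_{2n})=r$, so the entire path lies in $\Sigma_r$, while $\mathcal U(\Lambda_\theta)\to I_{2n}=\mathcal U(\Lambda_D)$ as $\theta\to0^+$ and as $\theta\to2\pi^-$. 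On each compact subinterval of $(0,2\pi)$ the eigenvalues $\tilde\lambda_j(\Lambda_\theta)$ of the associated Hermitian matrix stay bounded, so Proposition~\ref{prop:low bound} yields a uniform lower bound for $\lambda_1$, and Lemma~\ref{con: continuous}(1) shows that $\theta\mapsto\lambda_j(\Lambda_\theta)$ is continuous on $(0,2\pi)$.

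The crux is the computation of the two endpoint limits, which is where Theorem~\ref{thm: Maslov index continu} and the Maslov index enter. Treating $\theta=0$ (resp. $\theta=2\pi$) as the isolated singularity at which $\Lambda_\theta\to\Lambda_D$, I would read off the jump index from Lemma~\ref{Mas Dir}: as $\theta\to0^+$ the $2n-r$ moving eigenvalues of $\mathcal U(\Lambda_\theta)$ lie in the arc $\{e^{i\vartheta}:\vartheta\in(0,\theta_0)\}$, so $\nu^+=2n-r$ and the jump index is $2n-r$; as $\theta\to2\pi^-$ these eigenvalues approach $1$ from below, so $\nu^+=0$ and the jump index is $0$. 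Hence, by Theorem~\ref{thm: Maslov index continu} applied with $\Lambda_0=\Lambda_D$,
\[
\lim_{\theta\to0^+}\lambda_j(\Lambda_\theta)=\lambda_{j-(2n-r)}(\Lambda_D),\qquad
\lim_{\theta\to2\pi^-}\lambda_j(\Lambda_\theta)=\lambda_j(\Lambda_D).
\]
Since $\lambda_j(\Lambda_\theta)$ is continuous on $(0,2\pi)$ and tends to these two values at the endpoints, the intermediate value theorem shows it realizes every value strictly between them, each at some $\Lambda_{\theta_0}\in\Sigma_r$; this is the claimed open interval (and the case $j-(2n-r)\le0$, where the left limit is $-\infty$, is covered verbatim). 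The main obstacle I anticipate is precisely this orientation bookkeeping for the jump indices at the two approaches to $\Lambda_D$: one must track the direction in which the eigenvalues of $\mathcal U(\Lambda_\theta)$ cross $1$ on the unit circle carefully enough to conclude that one side contributes $2n-r$ and the other $0$, since this is exactly what distinguishes which endpoint of the interval is excluded.
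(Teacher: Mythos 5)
Your proof is correct in substance, but it distributes the work differently from the paper. The paper proves \emph{both} inclusions with a single construction: through an arbitrary $\Lambda_0\in\Sigma_r$ with frame matrix $A_0$ it runs the path \eqref{t G-path-A-s} with $A_s=A_0+\tan(s)I_{2n-r}$, $s\in(-\pi/2,\pi/2)$, whose both endpoints are $\Lambda_D$; since $A_s$ is increasing in $s$, the index form and hence $\lambda_j(\Lambda_s)$ are monotone non-increasing, and the two one-sided Maslov indices \eqref{maslov-index-pos-neg} (namely $0$ on $[-\pi/2,0]$ and $-(2n-r)$ on $[0,\pi/2]$) together with Theorem \ref{thm: Maslov index continu} identify the endpoint limits as $\lambda_j(\Lambda_D)$ and $\lambda_{j-(2n-r)}(\Lambda_D)$. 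Monotonicity then simultaneously pins $\lambda_j(\Lambda_0)$ between the two limits (the outer inclusion for the arbitrary $\Lambda_0$) and, with continuity, sweeps out the open interval (the inner inclusion). Your inner-inclusion argument is essentially the paper's, specialized: your path $\mathcal{U}^{-1}(I_r\oplus e^{i\theta}I_{2n-r})$ is exactly the paper's path through $A_0=0$ reparametrized, your Lemma \ref{Mas Dir} bookkeeping ($k_+=2n-r$ as $\theta\to0^+$, $k_-=0$ as $\theta\to2\pi^-$) is a correct and more explicit version of what the paper calls ``direct computation'', and the plain intermediate value theorem replaces monotonicity. Your outer-inclusion argument, by contrast, is genuinely different and more elementary: form-domain interlacing from $H_0\subset H_\Lambda$ of codimension $2n-r$ with $I_\Lambda|_{H_0}=I_{\Lambda_D}$, requiring no path and no Maslov index at all. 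That decoupling is a real simplification of this proposition, though the paper's monotone path through an \emph{arbitrary} $\Lambda_0$ is not wasted effort there --- it is reused in the endpoint analysis (Propositions \ref{left endpoint} and \ref{right endpoint}).

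One imprecision in your interlacing half should be fixed. In the sup-inf formulation \eqref{eigen vara}, taking ``$(j-1)$-dimensional test spaces inside $H_0$'' does \emph{not} yield $\lambda_j(\Lambda)\le\lambda_j(\Lambda_D)$: restricting the supremum to $E_{j-1}\subset H_0$ only produces a quantity that is $\le\lambda_j(\Lambda)$ and $\le\lambda_j(\Lambda_D)$, which compares nothing. The correct one-line fixes are either (i) for an arbitrary $E_{j-1}\subset H_\Lambda$, bound the infimum over $E_{j-1}^\perp\cap H_\Lambda$ by the infimum over the smaller set $E_{j-1}^\perp\cap H_0$, where $I_\Lambda=I_{\Lambda_D}$ and which has codimension at most $j-1$ in $H_0$; or (ii) use the dual Rayleigh--Ritz (inf-sup) characterization with $j$-dimensional trial subspaces $F\subset H_0\subset H_\Lambda$. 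Your lower bound $\lambda_{j-(2n-r)}(\Lambda_D)\le\lambda_j(\Lambda)$ likewise implicitly uses the inf-sup form (any $F\subset H_\Lambda$ of dimension $j$ meets $H_0$ in dimension $\ge j-(2n-r)$), which is standard but is not literally the formula \eqref{eigen vara} quoted in the paper; citing the min-max principle of \cite{CH,RS} in both forms closes this.
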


 \begin{proof}
 Let $A_0$ be the Hermitian matrix in the Lagrangian frame of $\Lambda_0\in\Sigma_{r}$. We define $\Lambda_s\in \text{Lag} (\mathcal{V}, \omega_{2n})$ by
(\ref{t G-path-A-s}),
where $A_s=A_0+\tan (s) I_{2n-r}$ for $s\in(-\pi/2, \pi/2)$. Noting that $\Lambda_{\pm\pi/2}:=\lim_{s\to\pm{\pi\over2}}\Lambda_s=\Lambda_D$,
 $\{\Lambda_s,s\in[-\pi/2, \pi/2]\}$ is a continuous loop.  It is obvious that
 \begin{equation*}  \dim \Lambda_s\cap \Lambda_D=r \quad \text{for} \quad s\in (-\pi/2, \pi/2).
 \end{equation*}
  Direct computation gives
  \begin{equation}\label{maslov-index-pos-neg} \mu(\Lambda_D, \Lambda_s, s\in [0, \pi/2])=-(2n-r), \quad    \mu(\Lambda_D, \Lambda_s, s\in [-\pi/2, 0])=0.   \end{equation}
 Recall that $I_{\Lambda_s}$ is the corresponding index form, we have
  \begin{equation*} I_{\Lambda_{s_1}}\geq I_{\Lambda_{s_2}}, \quad \text{if} \quad s_1\leq s_2.  \end{equation*}
By (\ref{eigen vara}) we get
 \begin{equation} \lambda_j(\Lambda_{s_1})\geq \lambda_j(\Lambda_{s_2}),   \quad \text{if} \quad s_1\leq s_2.  \label{inequality:lambdaj}
  \end{equation}
  Letting $s_2=0$ and $s_1\to(-\pi/2)^+$ in (\ref{inequality:lambdaj}),  we get by (\ref{maslov-index-pos-neg}) and Theorem  \ref{thm: Maslov index continu} that
   \begin{equation}\label{1.2.1-first part} [\lambda_j(\Lambda_{0}),  \lambda_j(\Lambda_{D}))\subset \lambda_j(\Sigma_r)\;\; \textrm {and}\;\; \lambda_j(\Lambda_{0})\leq\lambda_j(\Lambda_{D}). \end{equation}
  On the other hand,  letting $s_1=0$ and $s_2\to (\pi/2)^-$ in (\ref{inequality:lambdaj}), we infer again from (\ref{maslov-index-pos-neg}) and Theorem \ref{thm: Maslov index continu} that
 \begin{equation}\label{1.2.1-second part}(\lambda_{j-(2n-r)}(\Lambda_{D}),  \lambda_j(\Lambda_{0})]\subset \lambda_j(\Sigma_r)
 \;\; \textrm {and}\;\; \lambda_j(\Lambda_{0})\geq\lambda_{j-(2n-r)}(\Lambda_{D}). \end{equation}
  Then the conclusion  is proved by (\ref{1.2.1-first part}) and (\ref{1.2.1-second part}).
 \end{proof}

Next, we study the left endpoint of the range $\lambda_j(\Sigma_r)$.
 \begin{proposition}\label{left endpoint}
Fix any $0\leq r\leq 2n$ and $j>2n-r$.
Let
$$\lambda_{j-(2n-r)-b_1}(\Lambda_D)=\cdots=\lambda_{j-(2n-r)}(\Lambda_D)=\cdots=\lambda_{j-(2n-r)+b_2}(\Lambda_D)$$
with multiplicity to be $b_1+b_2+1$,  where $b_i\geq 0$, $i=1,2$.
Then we have two cases.

(1) If $r\leq b_1$, then for any $\Lambda\in\Sigma_r$,
 $$\lambda_j(\Lambda)>\lambda_{j-(2n-r)}(\Lambda_D).$$

(2) If $r>b_1$, then
 $$\min \lambda_j(\Sigma_r)=\lambda_{j-(2n-r)}(\Lambda_D).$$
 \end{proposition}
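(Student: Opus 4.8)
The plan is to read the left endpoint off the Morse index of the index form at the spectral level $\mu_0:=\lambda_{j-(2n-r)}(\Lambda_D)$. By Proposition \ref{prop: range with out endpoints consideration} we already know $\lambda_j(\Lambda)\ge\mu_0$ for every $\Lambda\in\Sigma_r$, so $\lambda_j(\Lambda)=\mu_0$ is equivalent to $\lambda_j(\Lambda)\le\mu_0$, i.e.\ to $p^-(\Lambda):=m^-(\Lambda)+m^0(\Lambda)\ge j$, where $m^-(\Lambda)$ and $m^0(\Lambda)$ are the Morse index and nullity of the quadratic form $I_\Lambda^{\mu_0}:=I_\Lambda-\mu_0\|\cdot\|_{L^2}^2$ on $H_\Lambda$. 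Thus the whole statement reduces to computing $\max_{\Lambda\in\Sigma_r}p^-(\Lambda)$ and deciding when it reaches $j$.

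First I would use the splitting $H_\Lambda=H_0\oplus\mathcal{X}_{k_0}(\Lambda)$ of Lemma \ref{lem:decompo}, where $k_0=\dim\mathcal{X}_{k_0}(\Lambda)=2n-r$. On $H_0$ the boundary term vanishes, so $I_\Lambda^{\mu_0}|_{H_0}$ is the Dirichlet form; its inertia is fixed by the hypotheses: it has $m-b_1-1$ negative directions (where $m:=j-(2n-r)$), nullity $b_1+b_2+1$ carried by the Dirichlet $\mu_0$-eigenspace $H_0^0$, and is coercive on the complement since $\lambda_{m+b_2+1}(\Lambda_D)>\mu_0$. Because $I_\Lambda^{\mu_0}$ differs from the Dirichlet form only through the finite-dimensional $\mathcal{X}_{k_0}(\Lambda)$, computing $p^-(\Lambda)$ reduces to finite dimensions by Schur-complementing out the coercive positive part. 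The one computation I actually need is the coupling: for a Dirichlet eigenfunction $u$ at $\mu_0$ and $x\in\mathcal{X}_{k_0}(\Lambda)$, integration by parts (using $u(0)=u(T)=0$ and $\mathcal{A}u=\mu_0 u$) gives
$$I_\Lambda^{\mu_0}(u,x)=\langle y_u(T),x(T)\rangle-\langle y_u(0),x(0)\rangle,$$
i.e.\ the standard Hermitian pairing of $(-y_u(0),y_u(T))$ with $(x(0),x(T))$.

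Next I would take $A\to+\infty$ along $A_s=A_0+\tan(s)I_{2n-r}$: the boundary term $-\langle A\,\cdot,\cdot\rangle$ then makes $I_\Lambda^{\mu_0}$ negative definite on $\mathcal{X}_{k_0}(\Lambda)$, and a Schur-complement computation shows that for $A$ sufficiently positive definite
$$p^-(\Lambda)=j+b_2-\mathrm{rank}(B),\qquad B(u,x):=I_\Lambda^{\mu_0}(u,x),$$
where $B$ is viewed as the pairing between the $(b_1+b_2+1)$-dimensional space $M=\{(-y_u(0),y_u(T))\}$ (the map $u\mapsto y_u(0)$ is injective on the eigenspace by uniqueness for \eqref{eq:Hamilton system}) and the $(2n-r)$-dimensional boundary-value space $V(\Lambda)$. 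Hence $\mathrm{rank}(B)=(b_1+b_2+1)-\dim\big(M\cap V(\Lambda)^\perp\big)$, and since $V(\Lambda)^\perp$ can be prescribed as an arbitrary $r$-dimensional subspace of $\mathbf{C}^{2n}$ through the frame \eqref{eq:precise express of boundary condition}, optimizing gives $\max_\Lambda\dim\big(M\cap V(\Lambda)^\perp\big)=\min(r,b_1+b_2+1)$. Therefore $\max_{\Lambda\in\Sigma_r}p^-(\Lambda)=j+r-b_1-1$ if $r\le b_1+b_2+1$ and $=j+b_2$ otherwise; in either case $\max_{\Lambda\in\Sigma_r}p^-(\Lambda)\ge j$ holds exactly when $r>b_1$. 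This gives case (2) (such a $\Lambda$ satisfies $\lambda_j(\Lambda)=\mu_0$, which is then the minimum by Proposition \ref{prop: range with out endpoints consideration}) and case (1) (for $r\le b_1$ one has $p^-(\Lambda)\le j-1$ for all $\Lambda$, so $\lambda_j(\Lambda)>\mu_0$).

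The main obstacle I expect is the bookkeeping in the finite-dimensional reduction: justifying the elimination of the coercive positive part in $H_\Lambda$, and tracking that once $A$ is large the Schur correction turns exactly $\mathrm{rank}(B)$ directions of the Dirichlet nullspace $H_0^0$ into positive directions, so that the nullity drops to $b_1+b_2+1-\mathrm{rank}(B)$ rather than remaining full. A secondary point to check carefully is that any prescribed $r$-dimensional $V(\Lambda)^\perp$ together with an arbitrarily large $A$ is genuinely realized by some $\Lambda\in\Sigma_r$; this is where the explicit frame \eqref{eq:precise express of boundary condition} and Lemma \ref{small per} enter.
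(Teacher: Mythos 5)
Your route is genuinely different from the paper's, and in outline it works; the two proofs are in fact dual to each other. The paper treats the two cases separately: for (1) it argues by contradiction, showing that $\lambda_j(\Lambda_0)=\mu_0$ would force $\mu_0$ to be an eigenvalue of $\mathcal{A}_{\Lambda_s}$ of multiplicity at least $r+1$ along the very same family $A_s=A_0+\tan(s)I_{2n-r}$, i.e. $\dim(Gr(\gamma_{\mu_0}(T))\cap\Lambda_s)\geq r+1$, contradicting Lemma \ref{small per}; for (2) it constructs explicit minimizers of the form $\alpha_0\oplus V\oplus W_{s_0}$ or $U\oplus W_{s_0}$ with $U\subset\alpha_0:=Gr(\gamma_{\mu_0}(T))\cap\Lambda_D$, identifying eigenvalue indices via Kato's perturbation theorem and Theorem \ref{thm: Maslov index continu}. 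Your Morse-index computation unifies both cases: your space $M=\{(-y_u(0)^T,y_u(T)^T)^T\}$ is exactly the $y$-part of the paper's $\alpha_0$, your optimization of $\dim\big(M\cap V(\Lambda)^\perp\big)$ is the paper's choice of $U\subset\alpha_0$, and the single identity $p^-=j+b_2-\mathrm{rank}(B)$ yields simultaneously the obstruction ($r\leq b_1$) and the attainability ($r>b_1$). What the paper's route buys is that all delicate inertia bookkeeping is outsourced to the geometric Lemma \ref{small per}; what your route buys is finer information, namely the exact value of $\max_{\Sigma_r}p^-$ and a description of which $V(\Lambda)$ realize the minimum.

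There is, however, one concrete gap, and it sits exactly where case (1) lives. Your identity $p^-(\Lambda)=j+b_2-\mathrm{rank}(B)$ is derived only for $A$ sufficiently positive definite, yet you then compute $\max_{\Lambda\in\Sigma_r}p^-(\Lambda)$ from it as if it bounded $p^-$ for \emph{every} $\Lambda\in\Sigma_r$; case (1) needs an upper bound for all $\Lambda$, including those with small or indefinite $A$. The missing (one-line, but indispensable) step is monotonicity: along $A_s=A_0+\tan(s)I_{2n-r}$ the form $I^{\mu_0}_{\Lambda_s}$ is non-increasing in $s$, so by the min-max principle \eqref{eigen vara} $p^-$ is non-decreasing, while $B$ is unchanged because the coupling $I^{\mu_0}_\Lambda(u,x)$ contains no $A$-term (here $u(0)=u(T)=0$); hence $p^-(\Lambda_0)\leq\lim_{s\to(\pi/2)^-}p^-(\Lambda_s)=j+b_2-\mathrm{rank}(B)$ for arbitrary $\Lambda_0$. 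State this explicitly. As for the Schur-complement obstacle you flag, it does close, for two reasons worth recording: first, any $u$ in the Dirichlet $\mu_0$-eigenspace with $B(u,\cdot)=0$ is \emph{exactly} null for the full form for every $A$, since the correction $B^*(-X_A)^{-1}B$ annihilates $\ker B$; second, along your family the correction on the complementary $\mathrm{rank}(B)$ directions of the nullspace is bounded below by $c/\tan(s)$, while the contamination produced by eliminating the positive and negative Dirichlet blocks is $O(1/\tan^2(s))$, so those directions become strictly positive for $s$ close to $\pi/2$. (A consistency check that protects your formula: since the symplectic form of two solutions at the same $\lambda$ is constant in $t$, $M$ is automatically orthogonal to $\{(v(0)^T,v(T)^T)^T:\mathcal{A}v=\mu_0v\}$, which rules out non-Dirichlet solutions persisting in $Gr(\gamma_{\mu_0}(T))\cap\Lambda_s$ for large $s$ and spoiling the count.)
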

\begin{proof}
  Firstly, we prove (1). Suppose that there exists $\Lambda_0\in \Sigma_r$ such that $\lambda_j(\Lambda_0)=\lambda_{j-(2n-r)}(\Lambda_D)$.
 Since $ 0\leq r\leq b_1$, $\lambda_{j-(2n-r)-b_1}(\Lambda_D)= \lambda_{j-2n}(\Lambda_D)=\lambda_{j-(2n-r)}(\Lambda_D)$. By Proposition \ref{prop: range with out endpoints consideration},  we have
 $\lambda_{j-2n}(\Lambda_D)\leq\lambda_{j-r}(\Lambda_0)$. Thus
 \begin{equation}\label{Lambda0r+1}\lambda_{j-2n}(\Lambda_D)=\lambda_{j-r}(\Lambda_0)= \cdots = \lambda_{j}(\Lambda_0)=:\lambda.\end{equation}
  Let $\Lambda_s$  be defined by (\ref{t G-path-A-s}),
where $A_s=A_0+\tan (s) I_{2n-r}$ for $s\geq0$. Thanks to  Proposition \ref{prop: range with out endpoints consideration} and the fact that   $I_{\Lambda_s}\leq I_{\Lambda_0}$ for  $s>0$,
we have $\lambda_{i-(2n-r)}(\Lambda_D)\leq\lambda_i(\Lambda_s)\leq \lambda_i({\Lambda_0})$ for all $i\geq1$.
By (\ref{Lambda0r+1}) we get that for $s\geq0$,
\begin{equation*} \lambda_{j-2n}(\Lambda_D)=\lambda_{j-r}(\Lambda_s)=\cdots=\lambda_{j}(\Lambda_s)=\lambda.
 \end{equation*}
 Then $\lambda$ is an eigenvalue of $\mathcal{A}_{\Lambda_s}$ with multiplicity to be at least $r+1$ and thus
\begin{equation}
\label{contradiction}\dim (Gr(\gamma_\lambda(T))\cap\Lambda_s)\geq r+1,\;\;\;\; s\geq0.\end{equation}
On the other hand,
we get by Lemma \ref{small per} that  for $s>0$ small enough,
\begin{equation*}
\dim (Gr(\gamma_\lambda(T))\cap\Lambda_s)\leq r, \end{equation*}
which is a contradiction to (\ref{contradiction}).

Next, we show that (2) holds. Let $l_1=b_1+b_2+1$ and $\alpha_0=Gr(\gamma_\lambda(T))\cap\Lambda_D$ with $\lambda:=\lambda_{j-(2n-r)}(\Lambda_D)$ for convenience. Then $\dim \alpha_0=l_1$. We divide the proof into two cases.

 Case 1: $r\geq l_1$.  Let $\tilde\Lambda_1=\alpha_0\oplus V \oplus W_{s_0}$, where $V\subset\Lambda_D\ominus \alpha_0$, $\dim V=r-l_1$ and
\begin{equation}\label{Ws0}
W_{s_0}=\begin{bmatrix}0_{r}&0\\0&({\tan(s_0)}+1)I_{2n-r}\\0&0\\0&I_{2n-r}\end{bmatrix}
\end{equation}
for ${\pi\over2}-s_0>0$ small enough. Then $\tilde\Lambda_1\in \Sigma_r$. By  Lemma \ref{small per} and the construction of $\tilde\Lambda_1$, $\dim(\tilde\Lambda_1\cap Gr(\gamma_\lambda(T)))=l_1$.
Let $\epsilon>0$ be small enough such that $\lambda$ is the only eigenvalue of $\mathcal{A}_{\Lambda_D}$ in $ [\lambda-\epsilon,\lambda+\epsilon]$. By Theorem 3.16 in \cite{Kato1984}, there are exactly $l_1$ eigenvalues (counting multiplicity) of $\mathcal{A}_{\tilde\Lambda_1}$ in  $ [\lambda-\epsilon,\lambda+\epsilon]$. They are    $\lambda_{j-b_1}(\tilde\Lambda_1)\leq\cdots\leq\lambda_{j}(\tilde\Lambda_1)\leq\cdots\leq\lambda_{j+b_2}(\tilde\Lambda_1)$ by Theorem  \ref{thm: Maslov index continu}.
 Since $\dim(\tilde\Lambda_1\cap Gr(\gamma_\lambda(T)))=l_1$, we have $\lambda_{j-b_1}(\tilde\Lambda_1)=\cdots=\lambda_{j}(\tilde\Lambda_1)=\cdots=\lambda_{j+b_2}(\tilde\Lambda_1)=\lambda$.
Therefore, $\lambda_{j}(\tilde\Lambda_1)=\lambda_{j-(2n-r)}(\Lambda_D)$.

 Case 2: $b_1<r< l_1$. Let $\tilde\Lambda_2=U\oplus W_{s_0}$, where $U\subset\alpha_0$, $\dim U=r$ and $W_{s_0}$ is given in (\ref{Ws0}) for ${\pi\over2}-s_0>0$ small enough. Then  $\tilde\Lambda_2\in \Sigma_r$. By Lemma \ref{small per}, $s_0$ can be chosen such that
 $\dim(\tilde\Lambda_2\cap Gr(\gamma_\lambda(T)))= r$.
 Similar to Case 1,   $\lambda_{j-b_1}(\tilde\Lambda_2)\leq\cdots\leq\lambda_{j}(\tilde\Lambda_2)\leq\cdots\leq\lambda_{j+b_2}(\tilde\Lambda_2)$  are all the   eigenvalues  of $\mathcal{A}_{\tilde\Lambda_2}$ in  $ [\lambda-\epsilon,\lambda+\epsilon]$.
 Since $\dim(\tilde\Lambda_2\cap Gr(\gamma_\lambda(T)))= r$ and  $\lambda_{i-(2n-r)}(\Lambda_D)\leq \lambda_i(\tilde\Lambda_2)$ for  $j-b_1\leq i\leq j+b_2$, we have $\lambda_{j-b_1}(\tilde\Lambda_2)=\cdots=\lambda_{j}(\tilde\Lambda_2)=\cdots=\lambda_{j+(r-b_1-1)}(\tilde\Lambda_2)=\lambda<\lambda_{j+(r-b_1)}(\tilde\Lambda_2)$.
Therefore, $\lambda_{j}(\tilde\Lambda_2)=\lambda_{j-(2n-r)}(\Lambda_D)$.
\end{proof}


Finally, we study the right endpoint of the range $\lambda_j(\Sigma_r)$.
\begin{proposition}\label{right endpoint}
Fix any $j\geq1$ and $0\leq r\leq 2n$.
Let
$$\lambda_{j-c_1}(\Lambda_D)=\cdots=\lambda_{j}(\Lambda_D)=\cdots=\lambda_{j+c_2}(\Lambda_D)$$
with multiplicity to be $c_1+c_2+1$,  where $c_i\geq 0$, $i=1,2$.
Then we have two cases.

(1) If $r\leq c_2$, then for any $\Lambda\in\Sigma_r$,
 $$\lambda_j(\Lambda)<\lambda_{j}(\Lambda_D).$$

(2) If $r>c_2$, then
 $$\max \lambda_j(\Sigma_r)=\lambda_{j}(\Lambda_D).$$
 \end{proposition}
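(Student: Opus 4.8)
The statement is the mirror image of Proposition \ref{left endpoint}: here Proposition \ref{prop: range with out endpoints consideration} already supplies the upper bound $\lambda_j(\Lambda)\le\lambda_j(\Lambda_D)$ for every $\Lambda\in\Sigma_r$, and the task is to decide when this value is attained. The whole argument of Proposition \ref{left endpoint} dualizes under the substitutions ``lower bound $\leftrightarrow$ upper bound'', ``the block $b_1,b_2$ around $\lambda_{j-(2n-r)}(\Lambda_D)$ $\leftrightarrow$ the block $c_1,c_2$ around $\lambda_j(\Lambda_D)$'', and, crucially, ``increasing $s \leftrightarrow$ decreasing $s$'' in the monotone family \eqref{t G-path-A-s}. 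Throughout I write $\lambda:=\lambda_j(\Lambda_D)$ and $l_2:=c_1+c_2+1$ for the multiplicity of $\lambda$, so that $\dim(Gr(\gamma_\lambda(T))\cap\Lambda_D)=l_2$.

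For part (1), where $r\le c_2$, I would argue by contradiction. Suppose some $\Lambda_0\in\Sigma_r$ has $\lambda_j(\Lambda_0)=\lambda$. Since $r\le c_2$ we have $\lambda_{j+r}(\Lambda_D)=\lambda$, so Proposition \ref{prop: range with out endpoints consideration} gives $\lambda_{j+r}(\Lambda_0)\le\lambda$; combined with $\lambda=\lambda_j(\Lambda_0)\le\lambda_{j+1}(\Lambda_0)\le\cdots\le\lambda_{j+r}(\Lambda_0)$ this forces $\lambda_j(\Lambda_0)=\cdots=\lambda_{j+r}(\Lambda_0)=\lambda$, i.e.\ $r+1$ coincident eigenvalues. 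Now I deform by \eqref{t G-path-A-s} with $A_s=A_0+\tan(s)I_{2n-r}$, but toward $s\le0$. Since $I_{\Lambda_s}\ge I_{\Lambda_0}$ for $s\le0$, the min-max formula \eqref{eigen vara} yields $\lambda_i(\Lambda_s)\ge\lambda_i(\Lambda_0)$, while Proposition \ref{prop: range with out endpoints consideration} gives $\lambda_i(\Lambda_s)\le\lambda_i(\Lambda_D)=\lambda$ for $j\le i\le j+r$; hence $\lambda_i(\Lambda_s)=\lambda$ on this range for all small $s\le0$. Thus $\dim(Gr(\gamma_\lambda(T))\cap\Lambda_s)\ge r+1$ for all such $s$, contradicting Lemma \ref{small per}, which bounds this intersection by $r$ for $s\ne0$ near $0$.

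For part (2), where $r>c_2$, the bound $\le\lambda$ is automatic, so I only need to exhibit $\tilde\Lambda\in\Sigma_r$ with $\lambda_j(\tilde\Lambda)=\lambda$. Set $\alpha_0=Gr(\gamma_\lambda(T))\cap\Lambda_D$, of dimension $l_2$, and split into $r\ge l_2$ and $c_2<r<l_2$, exactly as in Proposition \ref{left endpoint}, but building the transversal block $W_{s_0}$ of \eqref{Ws0} with $\pi/2+s_0>0$ small, i.e.\ $s_0\to-\pi/2$ instead of $s_0\to\pi/2$. This is the essential change: near $s=-\pi/2$ the path has Maslov index $0$ relative to $\Lambda_D$ by \eqref{maslov-index-pos-neg}, so by Theorem \ref{thm: Maslov index continu} the $l_2$ eigenvalues of $\mathcal{A}_{\tilde\Lambda}$ in a small interval $[\lambda-\epsilon,\lambda+\epsilon]$ (isolated by Theorem 3.16 of \cite{Kato1984}) are indexed with no shift, namely $\lambda_{j-c_1}(\tilde\Lambda)\le\cdots\le\lambda_{j+c_2}(\tilde\Lambda)$, keeping the value $\lambda$ at position $j$ — in contrast to the shift by $2n-r$ produced by $s_0\to\pi/2$ in the left-endpoint case. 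When $r\ge l_2$, taking $\tilde\Lambda=\alpha_0\oplus V\oplus W_{s_0}$ with $V\subset\Lambda_D\ominus\alpha_0$, $\dim V=r-l_2$, and choosing $s_0$ by Lemma \ref{small per} so that $\dim(\tilde\Lambda\cap Gr(\gamma_\lambda(T)))=l_2$, all $l_2$ of these eigenvalues equal $\lambda$, whence $\lambda_j(\tilde\Lambda)=\lambda$. When $c_2<r<l_2$, take $\tilde\Lambda=U\oplus W_{s_0}$ with $U\subset\alpha_0$, $\dim U=r$, arranged again via Lemma \ref{small per} so that $\dim(\tilde\Lambda\cap Gr(\gamma_\lambda(T)))=r$; then exactly $r$ of the block eigenvalues equal $\lambda$, and since $\lambda_i(\tilde\Lambda)\le\lambda_i(\Lambda_D)=\lambda$ forces the coincident ones to be the largest, they occupy indices $j+c_2-r+1,\dots,j+c_2$. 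As $r>c_2$ this range contains $j$, so $\lambda_j(\tilde\Lambda)=\lambda$.

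The main obstacle is the index bookkeeping in part (2): one must confirm that approaching the Dirichlet limit from the $s_0\to-\pi/2$ side gives zero index shift, so that $\lambda$ remains in the $j$-th slot, which is precisely where the half-open convention of the Maslov index distinguishes the right endpoint from the left. The subsidiary technical point is the exact evaluation of $\dim(\tilde\Lambda\cap Gr(\gamma_\lambda(T)))$ — equal to $l_2$ in the first sub-case and to $r$ in the second — for which Lemma \ref{small per} must be combined with the inclusion $\alpha_0\subset\tilde\Lambda$; everything else is a routine transcription of the proof of Proposition \ref{left endpoint}.
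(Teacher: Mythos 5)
Your proposal is correct and follows essentially the same route as the paper's own proof: the same contradiction argument for part (1) via the monotone deformation $A_s=A_0+\tan(s)I_{2n-r}$, $s\le 0$, combined with Lemma \ref{small per}, and for part (2) the same two constructions $\beta_0\oplus V\oplus W_{s_0}$ (for $r\ge l_2$) and $U\oplus W_{s_0}$ (for $c_2<r<l_2$) with $s_0$ near $-\pi/2$, so that the Maslov index vanishes and the eigenvalue block keeps the unshifted indices $j-c_1,\dots,j+c_2$. Your index bookkeeping, including placing the $r$ coincident eigenvalues at positions $j+c_2-r+1,\dots,j+c_2$, matches the paper exactly.
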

 \begin{proof} The method is similar as Proposition \ref{left endpoint} and we give the proof here for completeness.
 We first prove (1).
 Suppose that there exists $\Lambda_0\in \Sigma_r$ such that $\lambda_j(\Lambda_0)=\lambda_{j}(\Lambda_D)$.
 Since $ 0\leq r\leq c_2$, $\lambda_{j}(\Lambda_D)= \lambda_{j+r}(\Lambda_D)=\lambda_{j+c_2}(\Lambda_D)$. By Proposition \ref{prop: range with out endpoints consideration},  we have
 $ \lambda_{j+r}(\Lambda_0)\leq\lambda_{j+r}(\Lambda_D)$. Thus
 \begin{equation}\label{Lambda0r+1right}\lambda_{j}(\Lambda_0)=\cdots= \lambda_{j+r}(\Lambda_0) = \lambda_{j+r}(\Lambda_D)=:\lambda.\end{equation}
  Let
 $\Lambda_s$ be given by (\ref{t G-path-A-s}),
where $A_s=A_0+\tan (s) I_{2n-r}$ for $s\leq0$. Thanks to  Proposition \ref{prop: range with out endpoints consideration} and the fact that   $I_{\Lambda_s}\geq I_{\Lambda_0}$ for  $s<0$,
we have $\lambda_i({\Lambda_0})\leq\lambda_i(\Lambda_s)\leq \lambda_{i}(\Lambda_D)$ for all $i\geq1$.
By (\ref{Lambda0r+1right}) we get that for $s\leq0$,
\begin{equation*} \lambda_{j}(\Lambda_s)=\cdots=\lambda_{j+r}(\Lambda_s)=\lambda_{j+r}(\Lambda_D)=\lambda.
 \end{equation*}
 Then
\begin{equation}
\label{contradiction-right}\dim (Gr(\gamma_\lambda(T))\cap\Lambda_s)\geq r+1,\;\;\;\; s\leq0.\end{equation}
However,
we get by Lemma \ref{small per} that  for $s<0$ small enough,
\begin{equation*}
\dim (Gr(\gamma_\lambda(T))\cap\Lambda_s)\leq r, \end{equation*}
which contradicts (\ref{contradiction-right}).

Next, we prove (2). Let $l_2=c_1+c_2+1$ and $\beta_0=Gr(\gamma_\lambda(T))\cap\Lambda_D$ with $\lambda:=\lambda_{j}(\Lambda_D)$. Then $\dim \beta_0=l_2$. We divide the proof into two cases.

 Case 1: $r\geq l_2$.  Let $\hat\Lambda_1=\beta_0\oplus  V\oplus W_{s_0}$, where $V\subset\Lambda_D\ominus \beta_0$, $\dim V=r-l_2$ and
$W_{s_0}$ is given by (\ref{Ws0})
for $s_0+{\pi\over2}>0$ small enough. Then $\hat\Lambda_1\in \Sigma_r$. By  Lemma \ref{small per} and the construction of $\hat\Lambda_1$, $\dim(\hat\Lambda_1\cap Gr(\gamma_\lambda(T)))=l_2$.
Let $\epsilon>0$ be small enough such that $\lambda$ is the only eigenvalue of $\mathcal{A}_{\Lambda_D}$ in $ [\lambda-\epsilon,\lambda+\epsilon]$. By Theorem 3.16 in \cite{Kato1984}, there are exactly $l_2$ eigenvalues of $\mathcal{A}_{\hat\Lambda_1}$ in  $ [\lambda-\epsilon,\lambda+\epsilon]$. They are
  $\lambda_{j-c_1}(\hat\Lambda_1)=\cdots=\lambda_{j}(\hat\Lambda_1)=\cdots=\lambda_{j+c_2}(\hat\Lambda_1)=\lambda$
by Theorem  \ref{thm: Maslov index continu}
 and the fact that $\dim(\hat\Lambda_1\cap Gr(\gamma_\lambda(T)))=l_2$.
Therefore, $\lambda_{j}(\hat\Lambda_1)=\lambda_{j}(\Lambda_D)$.

 Case 2: $c_2<r< l_2$. Let $\hat\Lambda_2=U\oplus W_{s_0}$, where $U\subset\beta_0$ and $\dim U=r$ and $W_{s_0}$ is given in (\ref{Ws0}) for $s_0+{\pi\over2}>0$ small enough. Then  $\hat\Lambda_2\in \Sigma_r$. By Lemma \ref{small per}, $s_0$ can be chosen such that
 $\dim(\hat\Lambda_2\cap Gr(\gamma_\lambda(T)))= r$. Similar to Case 1,
  $\lambda_{j-c_1}(\hat\Lambda_2)\leq\cdots\leq\lambda_{j}(\hat\Lambda_2)\leq\cdots\leq\lambda_{j+c_2}(\hat\Lambda_2)$ are all the  eigenvalues  of $\mathcal{A}_{\hat\Lambda_2}$ in  $ [\lambda-\epsilon,\lambda+\epsilon]$.
 Since $\dim(\hat\Lambda_2\cap Gr(\gamma_\lambda(T)))= r$ and  $\lambda_i(\hat\Lambda_2)\leq\lambda_{i}(\Lambda_D)$ for  $j-c_1\leq i\leq j+c_2$, we have $\lambda_{j-(r-c_2)}(\hat\Lambda_2)<\lambda_{j-(r-c_2-1)}(\hat\Lambda_2)=\cdots=\lambda_{j}(\hat\Lambda_2)=\cdots=\lambda_{j+c_2}(\hat\Lambda_2)=\lambda$.
Therefore, $\lambda_{j}(\hat\Lambda_2)=\lambda_{j}(\Lambda_D)$.
 \end{proof}

 \section{Proof of Theorem \ref{thm:lim to Lag}}

Recall that $\ga_\lambda$ defined in (\ref{fundamental solution}) is the  fundamental solution of  \eqref{eq:Hamilton system}. In this section, we give the asymptotic behavior of $Gr(\gamma_{\lambda}(T))$ as $\lambda\to-\infty$.

\begin{proof} [Proof of Theorem \ref{thm:lim to Lag}]
  We first prove (\ref{thm:lim to Lag1}). It is equivalent to show that $\lim\limits_{\lambda\to-\infty}\mathcal{U}(Gr(\ga_\lambda(T)))=\mathcal{U}(\Lambda_D)$.
Suppose otherwise, there exist $\Lambda_0\neq \Lambda_D\in \text{Lag}(\mathcal{V},\omega_{2n})$ and a sequence  $\{v_m\}_{m=1}^\infty$ such that $\lim\limits_{m\to\infty}v_m=-\infty$ and
$\lim\limits_{m\to\infty}\mathcal{U}(Gr(\ga_{v_m}(T)))=\mathcal{U}(\Lambda_0)$.
Let $\hat\Lambda_m:=Gr(\gamma_{v_m}(T))$ for convenience.

Firstly, we claim that
\begin{equation}\label{def-K-with-claim}
K(\Sigma_{\mathcal{U}(\Lambda_0)},\Sigma_{\mathcal{U}(\Lambda_D)}):=\sup_{U_1\in\Sigma_{\mathcal{U}(\Lambda_0)}}\inf_{U_2
\in\Sigma_{\mathcal{U}(\Lambda_D)}}\|U_1-U_2\|>0.
\end{equation}
In fact, since $\mathcal{U}(\Lambda_0)\neq I_{2n}$, there exists  $\kappa\in\sigma(\mathcal{U}(\Lambda_0))$
  such that $\kappa\neq1$.  Obviously,   $\kappa I_{2n}\notin\Sigma_{\mathcal{U}(\Lambda_D)}$ and  $\kappa I_{2n}\in\Sigma_{\mathcal{U}(\Lambda_0)}$. Direct computation shows that $\inf_{U
\in\Sigma_{\mathcal{U}(\Lambda_D)}}\|\kappa I_{2n}-U\|>0$,  and thus (\ref{def-K-with-claim}) holds.

Then we claim that
\begin{equation}\label{Limit-second-claim}
K(\Sigma_{\mathcal{U}(\Lambda_0)},\Sigma_{\mathcal{U}(\hat\Lambda_m)})\to0
\end{equation}
as $m\to\infty.$

Since $\mathcal{U}(\hat\Lambda_m) \to \mathcal{U}(\Lambda_0)$, we infer that
for any $\epsilon>0$, there exists $N>0$ such that
$\|\mathcal{U}(\Lambda_0)^{-1}\mathcal{U}(\hat\Lambda_m)-I_{2n}\|<\epsilon $  for each $m>N$.
Let $U \in \Sigma_{\mathcal{U}(\Lambda_0)}$.
Then $U\mathcal{U}(\Lambda_0)^{-1}\mathcal{U}(\hat\Lambda_m)\in \Sigma_{\mathcal{U}(\hat\Lambda_m)}$.
It follows that
 $$
K(\Sigma_{\mathcal{U}(\Lambda_0)},\Sigma_{\mathcal{U}(\hat\Lambda_m)})\le \sup_{U\in \Sigma_{\mathcal{U}(\Lambda_0)}}\|U\mathcal{U}(\Lambda_0)^{-1}\mathcal{U}(\hat\Lambda_m)-U\|<\epsilon,
$$
where $m> N$. Therefore, we get \eqref{Limit-second-claim}.

By (\ref{def-K-with-claim}), there exists $\mathcal{U}(\Lambda_3)\in \Sigma_{\mathcal{U}(\Lambda_0)}$ such that $\mathcal{U}(\Lambda_3)\notin \Sigma_{\mathcal{U}(\Lambda_D)}$, and there exists a compact neighborhood  $V_{\mathcal{U}(\Lambda_3)}$ of $\mathcal{U}(\Lambda_3)$ such that $V_{\mathcal{U}(\Lambda_3)}\cap\Sigma_{\mathcal{U}(\Lambda_D)}=\emptyset$. This deduces that $\lambda_1(V_{\Lambda_3})$ is bounded from below by Proposition \ref{prop:low bound}, where $V_{\Lambda_3}=\mathcal{U}^{-1}(V_{\mathcal{U}(\Lambda_3)})$.
On the other hand,  {
we get by \eqref{Limit-second-claim} that $\lim\limits_{m\to \infty} \inf_{U\in \Sigma_{\mathcal{U}(\hat\Lambda_m)}} \|\mathcal{U}$ $(\Lambda_3)-U\|=0$.
So there exists $U_m\in \Sigma_{\mathcal{U}(\hat\Lambda_m)}$ such that $\lim\limits_{m\to \infty} \|\mathcal{U}(\Lambda_3)-U_m\|=0$.
	It follows that $\Sigma_{\mathcal{U}(\hat\Lambda_m)}\cap V_{\mathcal{U}(\Lambda_3)}\neq\emptyset$ when $m$ is sufficiently large.}
 Choose $\mathcal{U}(\tilde\Lambda_m)\in\Sigma_{\mathcal{U}(\hat\Lambda_m)}\cap V_{\mathcal{U}(\Lambda_3)}$. Then  $v_m$ is an eigenvalue of $\mathcal{A}_{\tilde\Lambda_m}$.
However, $\lim\limits_{m\to \infty}v_m=-\infty$   contradicts that $\lambda_1(V_{\Lambda_3})$ is bounded from below.

{Next, we prove (\ref{thm:lim to Lag2}).
	Using the fact that $\cA_{Gr(\gamma_{\lambda(s)}(T))}$ has an eigenvalue $\lambda(s)$ with the multiplicity to be $2n$, we obtain
that  $\cA_{Gr(\gamma_{\lambda(s)}(T))}$ has at least $2n$ eigenvalues such that they tend to $-\infty$ as $s\to(-\pi/2)^+$.
	Then we have $$\mu(\Lambda_D,Gr(\gamma_{\lambda(s)}(T)),s\in[-\pi/2,-\pi/2+\epsilon])\geq2n$$ by Theorem \ref{thm: Maslov index continu}.
	Since $Gr(\gamma_{\lambda(s)}(T))\cap {\Lambda_D}=\{0\}$ for $s\in(-\pi/2,-\pi/2+\epsilon]$ with $\epsilon>0$  small enough,
	we have $\mu(\Lambda_D,Gr(\gamma_{\lambda(s)}(T)),s\in[-\pi/2,-\pi/2+\epsilon])\leq2n$.
	It then follows that  $$\mu(\Lambda_D,Gr(\gamma_{\lambda(s)}(T)),s\in[-\pi/2,-\pi/2+\epsilon])=2n.$$ }
 \end{proof}

{ As a consequence of  Theorem \ref{thm:lim to Lag}, we get the following result.
	\begin{proposition} Let $\lambda(s)=\tan(s)$  for $s\in [-\pi/2, +\pi/2]$.
	Then there exists $\epsilon_0>0$ such that $Gr(\gamma_{\lambda(s)}(T))\cap Gr(\gamma_{\lambda(t)}(T)) =\{0\}$ for any $-\pi/2<s<t<-\pi/2+\epsilon_0$.
	\end{proposition}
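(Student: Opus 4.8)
The plan is to reduce the transversality of the two graphs to a spectral statement about a single unitary matrix, and then to exploit the strict monotonicity (positivity) of the path $s\mapsto Gr(\gamma_{\lambda(s)}(T))$ recorded in Theorem \ref{thm:lim to Lag}. Write $\Lambda(s)=Gr(\gamma_{\lambda(s)}(T))$ and $U(s)=\mathcal{U}(\Lambda(s))\in \mathbf{U}(2n)$. By the dimension formula from Section 2, $\dim\big(\Lambda(s)\cap\Lambda(t)\big)=\dim\ker\big(U(t)^{-1}U(s)-I_{2n}\big)$, so for a fixed $t$ it suffices to prove that $1\notin\sigma\big(V(s)\big)$, where $V(s):=U(t)^{-1}U(s)$, for all $s\in(-\pi/2,t)$. (Equivalently, since $\Lambda(t)$ is itself a self-adjoint boundary condition, this says $\lambda(s)$ is not an eigenvalue of $\mathcal{A}_{\Lambda(t)}$, i.e. $\lambda(t)$ is the smallest eigenvalue of $\mathcal{A}_{\Lambda(t)}$, of multiplicity $2n$.)

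First I would record the monotonicity. Differentiating $\Lambda(s)$ and using $\partial_\lambda\mathcal{B}_\lambda=\mathrm{diag}(0,I_n)\ge0$, the velocity quadratic form of the Lagrangian path at a point $(u,\gamma_{\lambda(s)}(T)u)\in\Lambda(s)$ equals $\lambda'(s)\int_0^T|x(\tau)|^2\,d\tau$, where $z=(y^T,x^T)^T$ solves \eqref{eq:Hamilton system} with $z(0)=u$; since $u\neq0$ forces $x\not\equiv0$, this form is positive definite, i.e. $-iU'(s)U(s)^{-1}>0$. This is precisely the positivity underlying \eqref{thm:lim to Lag2}. Left multiplication by the fixed unitary $U(t)^{-1}$ preserves positivity, $-iV'(s)V(s)^{-1}=U(t)^{-1}\big(-iU'(s)U(s)^{-1}\big)U(t)>0$, so the continuous eigenvalue phases $\psi_1(s),\dots,\psi_{2n}(s)$ of $V(s)$ are \emph{strictly increasing} in $s$. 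I normalise the branches by $\psi_j(t)=0$, which is consistent because $V(t)=I_{2n}$.

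It remains to control the total rotation so that no $\psi_j$ reaches a nonzero multiple of $2\pi$ on $(-\pi/2,t)$. Let $\theta_1(s),\dots,\theta_{2n}(s)$ be the continuous phases of $U(s)$ with $\theta_j(-\pi/2)=0$ (recall $\mathcal{U}(\Lambda_D)=I_{2n}$ and $\Lambda(-\pi/2)=\Lambda_D$ by \eqref{thm:lim to Lag1}). Comparing the continuous determinants of $V(s)$, $U(s)$, $U(t)$ gives $\sum_j\psi_j(s)=\sum_j\theta_j(s)-\sum_j\theta_j(t)$, whence $\sum_j\big(\psi_j(t)-\psi_j(-\pi/2)\big)=\sum_j\theta_j(t)$. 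Each summand on the left is positive by strict monotonicity, and $\sum_j\theta_j(t)\to0$ as $t\to(-\pi/2)^+$ by continuity and $\theta_j(-\pi/2)=0$; hence I may fix $\epsilon_0\in(0,\epsilon]$ so small that $\sum_j\theta_j(t)<2\pi$ for all $t\in(-\pi/2,-\pi/2+\epsilon_0)$. Then every increment obeys $\psi_j(t)-\psi_j(-\pi/2)<2\pi$, so $\psi_j(-\pi/2)>-2\pi$, and since $\psi_j$ is increasing with $\psi_j(t)=0$ we get $\psi_j(s)\in(-2\pi,0)$ for all $s\in(-\pi/2,t)$. Thus $\psi_j(s)\notin2\pi\mathbf{Z}$, so $1\notin\sigma(V(s))$ and $\Lambda(s)\cap\Lambda(t)=\{0\}$, as required.

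The main obstacle I anticipate is the bookkeeping in the last two paragraphs: establishing that the velocity form is positive on \emph{all} of $\Lambda(s)$ (not merely on intersection subspaces), so that every phase moves strictly, and then pinning down the continuous branches so the accumulated rotation of each phase stays below a full turn. The determinant/winding identity together with $\Lambda(t)\to\Lambda_D$ is what keeps the total rotation small; the positivity computation $\lambda'(s)\int_0^T|x|^2$ is standard but should be carried out using the explicit form of $\mathcal{B}_\lambda$ and variation of parameters for $\partial_\lambda\gamma_\lambda(T)$.
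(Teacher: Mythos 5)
Your proposal is correct, but it takes a genuinely different route from the paper's own proof. The paper argues by contradiction using its spectral machinery: if $\Lambda_{s_\epsilon}\cap\Lambda_{t_\epsilon}\neq\{0\}$ for pairs arbitrarily close to $-\pi/2$, then $\mathcal{A}_{\Lambda_{t_\epsilon}}$ has the eigenvalue $\lambda(t_\epsilon)$ with multiplicity $2n$ \emph{plus} the extra eigenvalue $\lambda(s_\epsilon)$, so at least $2n+1$ eigenvalues diverge to $-\infty$ as $\epsilon\to0^+$; since $\mu(\Lambda_D,Gr(\gamma_{\lambda(t)}(T)),t\in[-\pi/2,-\pi/2+\epsilon])=2n$ by Theorem \ref{thm:lim to Lag}, Theorem \ref{thm: Maslov index continu} forces the $(2n+1)$-th eigenvalue to remain bounded from below, a contradiction. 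You instead work entirely on the unitary side: instantaneous positivity $-iU'(s)U(s)^{-1}>0$ from the crossing form $\lambda'(s)\int_0^T|x(\tau)|^2d\tau$ (which is indeed positive definite, since $x\equiv0$ forces $y\equiv0$ in \eqref{eq:Hamilton system} and hence $u=0$), strict monotonicity of the phases of $V(s)=\mathcal{U}(\Lambda(t))^{-1}\mathcal{U}(\Lambda(s))$, and the determinant/winding identity combined with continuity at $\Lambda_D$ (i.e.\ \eqref{thm:lim to Lag1}) to trap every phase in $(-2\pi,0)$, so that $1\notin\sigma(V(s))$. Both proofs need \eqref{thm:lim to Lag1} at the endpoint, but yours bypasses Theorem \ref{thm: Maslov index continu} and the eigenvalue-counting apparatus altogether; it is more quantitative, actually \emph{proving} the ``positive path'' property that the paper only asserts informally after Theorem \ref{thm:lim to Lag}, and it yields \eqref{thm:lim to Lag2} as a byproduct rather than using it. The price is that you must carry out the variation-of-parameters computation for $\partial_\lambda\gamma_\lambda(T)$, verify that the sign convention in \eqref{def-f} makes ``positive crossing form'' correspond to counterclockwise rotation (it does, as a rank-one example shows), and treat phase monotonicity across eigenvalue crossings of the unitary path; these are standard facts (cf.\ \cite{Lon4,BoZh14}) but are nowhere developed in the paper, so your proof is self-contained only modulo that background.
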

	\begin{proof}
	Denote $Gr(\gamma_{\lambda(r)}(T))$ by $\Lambda_{r}$.
		Suppose that for any $\epsilon>0$, there exist $-\pi/2<s_\epsilon<t_\epsilon<-\pi/2+\epsilon$ such that $\Lambda_{s_\epsilon}\cap \Lambda_{t_\epsilon} \neq\{0\}$.
		Then $\cA_{\Lambda_{t_\epsilon}}$ has
		an eigenvalue $\lambda(t_\epsilon)$ with its multiplicity to be $2n$ and another eigenvalue $\lambda(s_\epsilon)$.
		It follows that  at least $2n+1$ eigenvalues of $\cA_{\Lambda_{t_\epsilon}}$ tend to $-\infty$ as $\epsilon\to0^+$.
		By Theorem \ref{thm:lim to Lag}, $\mu(\Lambda_D,\Lambda_{t},t\in [-\pi/2,-\pi/2+\epsilon])=2n$ with $\epsilon>0$ small enough.
		So by Theorem  \ref{thm: Maslov index continu}, the $(2n+1)$-th eigenvalue of $\cA_{\Lambda_{t}}$ is bounded from below as $t\to (-\pi/2)^+$, which is a contradiction.
		Then the proposition follows.
\end{proof}
\bigskip

\noindent {\bf Acknowledgements.}  The authors sincerely thank Prof. Yiming Long for his interest.
X. Hu is partially supported
by NSFC (Nos.11425105, 11790271). H. Zhu is partially supported  by  PITSP (No. BX20180151) and CPSF (No. 2018M630266).

\end{document}